\documentclass{article}
\usepackage{color}
\usepackage{amsfonts}
\usepackage{tikz}
\usepackage{amssymb}
\usepackage{amsmath}
\usepackage{setspace}
\usepackage{circuitikz}
\usepackage{amsthm}
\usepackage[mathscr]{euscript}
\usepackage{hyperref}
\usepackage{geometry}
\newtheorem{theorem}{Theorem}[section]

\newtheorem{definition}{Definition}[section]
\newtheorem{lemma}{Lemma}[section]
\newtheorem{proposition}{Proposition}[section]
\newtheorem{remark}{Remark}[section]

\numberwithin{equation}{section}

\newcommand{\m}{\mathbb}
\newcommand{\ml}{\mathcal}
\newcommand{\p}{\partial}
\newcommand{\pprec}{\prec\!\!\!\prec}

\makeatletter
\usepackage{graphicx} 

\allowdisplaybreaks[2] 

\begin{document}

\setstretch{1.25}
\title{Rough Stochastic Viscous Camassa-Holm equation}
\author{Xiaoxin $\mbox{Chen}^1$ \footnote{E-mail: chenxx233@mail2.sysu.edu.cn},\quad
    Yiyao $\mbox{Lian}^1$ \footnote{E-mail: lianyy7@mail2.sysu.edu.cn},\quad
	Zhaoyang $\mbox{Yin}^{1}$\footnote{E-mail: mcsyzy@mail.sysu.edu.cn},\quad
    Shuolin $\mbox{Zhang}^1$ \footnote{E-mail: zhangshlin9@mail2.sysu.edu.cn}\\
$^1\mbox{School}$ of Science,\\ Shenzhen Campus of Sun Yat-sen University, Shenzhen 518107, China}

\date{}
\maketitle
\hrule

\begin{abstract}
In this paper, we establish well-posedness results for the viscous Camassa--Holm equation driven by the additive Gaussian noise $(1-\partial_x^2)^{-\eta}\xi$, where $\xi$ denotes the space-time white noise and $\eta>-1/12$. This formulation includes spatially colored noise for $\eta>0$, space-time white noise for $\eta=0$, and rough noise for $-1/12<\eta<0$. More precisely, we establish a pathwise solution theory via approximation by equations driven by regularized noises. The main difficulty arises from the singular interactions between the nonlinear terms and the irregular stochastic convolution. By introducing suitable enhanced stochastic objects and applying paracontrolled techniques, we obtain local well-posedness for the full range $\eta>-1/12$. Furthermore, when $\eta>1/4$, we establish global well-posedness by deriving suitable energy estimates.

\vspace*{5pt}
\noindent {\it 2020 Mathematics Subject Classification}: 
	
	\vspace*{5pt}
	\noindent{\it Keywords}: Viscous Camassa-Holm equation; Paracontrolled approach; Noise perturbation
\end{abstract}

\tableofcontents

\section{Introduction}
In this paper, we consider the viscous Camassa-Holm equation with spatially colored Gaussian noise. First, we introduce the viscous Camassa-Holm equation as follows: 
\begin{equation}\label{eq;viscous CH}
    u_t-u_{xxt}-\epsilon(u_{xx}-u_{xxxx})+3uu_x=2u_xu_{xx}+uu_{xxx},\quad t>0,\\
\end{equation}
where $u$ is the fluid velocity and $\epsilon> 0$ is the viscosity constant. The above equation admits the following equivalent form:
\begin{equation}\label{eq;viscous CH equivalent form}
    u_{t}+uu_x-\epsilon u_{xx}=P(D)(u^2+\frac{1}{2}u_x^2),
\end{equation}
where $P(D)\triangleq -\p_x(1-\p_x^2)^{-1}$.

\par The viscous Camassa–Holm equation, which reduces to the classical Camassa–Holm (CH) equation when $\epsilon=0$, arises as a natural generalization incorporating viscous effects and occupies a unique and important position in the theory of nonlinear partial differential equations. The classical Camassa–Holm  equation takes the form  
\begin{equation}\label{eq;no noise_Camassa-Holm}
    u_t-u_{txx}+ 3uu_x=2u_xu_{xx}+uu_{xxx},
\end{equation}
which is a fundamental nonlinear partial differential equation arising in fluid dynamics and integrable systems, and has attracted increasing attention in recent years. The variable $u(t,x)$ in \eqref{eq;no noise_Camassa-Holm} denotes the fluid velocity in the $x$-direction at time $t$, or equivalently, the height of the water's free surface above a flat bottom. The CH equation was initially proposed by Fuchssteiner and Fokas \cite{Fokas1981} in 1981 in connection with hereditary symmetries. Later, Camassa and Holm \cite{Camassa1993} derived it explicitly as a shallow water wave model in 1993. The CH equation is well known for its rich mathematical structure. It is completely integrable \cite{Camassa1993,Con2,Con3}, admits a bi-Hamiltonian formulation \cite{Con4,Fokas1981}, and supports peaked soliton solutions (peakons) of the form  $ce^{-\vert x-ct\vert}$ \cite{Camassa1993}. Moreover, the equation captures wave-breaking phenomena \cite{Beals2000,Con5,Whitham1999}, and its local well-posedness in Sobolev and Besov spaces has also been established \cite{Con8,Danchin2001,Danchin2003}. 

\par The viscous Camassa-Holm equation  has attracted considerable attention over the past two decades, with extensive studies devoted to various aspects of its mathematical theory. In \cite{Foias2001}, the Camassa–Holm equation is regarded as the nonlinearly dispersive Navier–Stokes-$\alpha$ model for incompressible turbulence. This physical interpretation naturally motivates the study of its viscous counterpart, as the inclusion of viscosity aligns with the dissipative mechanisms present in real fluid flows. A natural multi-dimensional extension of the viscous Camassa–Holm equation \eqref{eq;viscous CH} is provided by the Navier–Stokes-$\alpha$ equation, which reads
\begin{equation}\label{eq;NS-alpha}
\left\{\begin{array}{l}
\partial_t v+u\cdot\nabla v+v\cdot\nabla u^{T}+\nabla \pi-\nu\Delta v=0,\\
v=u-\alpha^2\Delta u,\quad 
\mathrm{div}v=0
\end{array}\right.
\end{equation}
with the initial data $v_0= u_0-\alpha^2\Delta u_0$. This system plays a central role in modeling turbulent flows at large scales, as it provides a mathematically tractable framework that incorporates both viscous dissipation and nonlinear dispersion. For further studies on the Navier–Stokes-$\alpha$ system, we refer to \cite{Chen1998,Foias2001,Foias2002JDE}.  \par
\par Nevertheless, deterministic models may not fully reflect the intrinsic uncertainties and random fluctuations arising in complex fluid systems.  In this paper, we focus on viscous CH equation with stochastic forcing. Before showing the main results, we introduce stochastic and SPDE theory.
\subsection{Noise perturbation}
\par The introduction of stochastic perturbations provides a natural framework for describing random fluctuations that cannot be captured by deterministic models. Different types of stochastic forcing have been considered in fluid dynamics. For instance, multiplicative noise can be introduced to model random transport effects and energy exchange mechanisms. In \cite{holm2015variational}, Holm derived stochastic partial differential equations for ideal fluid dynamics from stochastic variational principles, where the Stratonovich stochastic processes represent unresolved fluctuations in the fluid motion. More general multiplicative noises of the form $h(t,u,\p_xu)dW$ have also been investigated, see, e.g.,\cite{rohde2021stochastic,miao2021well,holden2022global}.  Additive stochastic forcing has also been studied in various evolution equations. For example, additive noise perturbations for KdV-type equations have been considered in \cite{chen2012well}. However, the viscous Camassa--Holm equation with spatially singular stochastic forcing has not been fully understood. Motivated by these considerations, we investigate the viscous Camassa--Holm equation subject to white noise perturbations on the torus $\mathbb{T}$. Without loss of generality, we take the viscosity coefficient $\epsilon=1$. Namely, we consider the viscous Camassa-Holm equation with spatially colored Gaussian noise  perturbation on $\m T$:
\begin{equation}\label{eq;CH;diffusion}
			u_{t}+uu_x-\Delta u=P(D)(u^2+\frac{1}{2}u_x^2)+(1-\p_x^2)^{-\eta}\xi,
	\end{equation}
where $\xi$ is space-time white noise satisfying 
\begin{equation}\label{eq;covariance;xi}
    \m E[\xi(t,x)\xi(s,y)]=\delta_0(t-s)\delta_0(x-y)
\end{equation}
and the parameter $\eta$ quantifies the roughness of  noise in the spatial variable. For $\eta>0$, $(1-\p_x^2)^{-\eta}\xi$ is called colored or regularized noise; for $\eta=0$, $(1-\p_x^2)^{-\eta}\xi=\xi$ is white noise; for $\eta<0$, it is called rough noise. 
\par The motivation of this work is twofold. First, it is inspired by the fluctuation-dissipation theorem, which reveals that fluctuations and dissipation are two manifestations of the same underlying physical mechanism; see, e.g., \cite{kubo1966fluctuation,prost2009generalized,stratonovich2012nonlinear}. This fundamental principle suggests that dissipative systems should naturally be coupled with stochastic perturbations in order to capture the corresponding fluctuations induced by the dissipative effects.
\par Second, our study is motivated by the stochastic Navier--Stokes equations with white noise forcing, which provide a fundamental framework for incorporating random fluctuations into fluid dynamics; see, e.g., \cite{flandoli1995ergodicity,hairer2006ergodicity,hairer2024global}. In particular, the multidimensional viscous Camassa--Holm equation is closely related to the Navier-Stokes-$\alpha$ model \eqref{eq;NS-alpha}.
\par The main difficulty in the analysis of the stochastic Camassa--Holm equation comes from the interaction between the rough stochastic forcing and the nonlinear structure of the equation. Due to the low regularity of the stochastic convolution, nonlinear terms such as $u_x^2$ and $uu_x$ cannot generally be interpreted in the classical sense. This difficulty is also encountered in other fundamental examples of singular SPDEs, such as the Rough Burgers and KPZ equations.
\subsection{Singular SPDEs}
\par Without the nonlinearity $P(D)(u^2+\frac{1}{2}u_x^2)$, the equation becomes the viscous Burgers equation with spatially colored Gaussian noise 
\begin{equation}
    \p_t u+uu_x- \Delta u=(1-\p_x^2)^{-\eta}\xi.
\end{equation}
 The choice $\eta=-1/2$ corresponds to a noise with the same spatial scaling as the derivative of white noise appearing in the classical Rough Burgers equation \cite{gubinelli2017kpz}
 \begin{equation}\label{eq;rough Burgers}
     \p_tu-\Delta u=\p_x(u^2)+\p_x\xi,
 \end{equation}
 where the solution is the derivative of the solution of KPZ equation \cite{hairer2013solving,gubinelli2017kpz}
 \begin{equation}
     \p_t u-\Delta u=u_x^2+\xi.
 \end{equation}
\par For these fundamental examples of singular SPDEs, one of the main difficulties lies in the ill-defined nonlinear terms caused by the low regularity of the stochastic forcing. Due to the singular nature of the noise, classical analytical techniques for nonlinear PDEs are no longer applicable, and a suitable solution theory was not available for a long period of time. A major breakthrough was achieved by Hairer, who introduced the notion of pathwise solutions together with a detailed approximation procedure in \cite{hairer2013solving}, and developed the theory of regularity structures in \cite{hairerTheoryRegularityStructures2014}. This theory provides a general framework for constructing solutions to a broad class of singular SPDEs.
\par Meanwhile, Gubinelli and collaborators introduced the paracontrolled calculus in \cite{gubinelli2015paracontrolled,gubinelli2017kpz}. Although this approach is less general than Hairer's regularity structures, it has the advantage of a more explicit construction and greater flexibility for equations with specific algebraic structures. Moreover, it establishes a close connection between harmonic analysis and rough path theory.
\par To briefly illustrate the basic strategy for treating singular equations, we consider the rough Burgers equation \eqref{eq;rough Burgers}. We first study its regularized version
\begin{equation}
\p_t u^\sigma-\Delta u^\sigma
=\p_x\bigl((u^\sigma)^2\bigr)+\p_x\xi_\sigma,
\end{equation}
where $\xi_\sigma$ denotes a spatial mollification of the space-time white noise $\xi$. In one spatial dimension, the noise has regularity
$$
    \xi\in \ml C_t^{-1/2-}\ml C_x^{-1/2-}.
$$
To isolate its leading singular component, let \(X^\sigma\) be the stochastic convolution solving
\begin{equation}
\p_tX^\sigma-\Delta X^\sigma=\p_x\xi_\sigma,\quad X^\sigma(0)=0.
\end{equation}
Writing $$ v^\sigma=u^\sigma-X^\sigma, $$
we obtain the random PDE
\begin{equation}
\p_tv^\sigma-\Delta v^\sigma
=\p_x\bigl(v^\sigma+X^\sigma\bigr)^2.
\end{equation}
Although this transformation removes the explicit noise from the equation, it does not by itself resolve the singularity. Indeed, \(X^\sigma\) converges in \(C_T\ml C^\alpha\) only for \(\alpha<-1/2\), so products such as \((X^\sigma)^2\) are not classically well defined in the limit \(\sigma\to0\).

The central idea of rough-path-inspired approaches to singular SPDEs is to separate the leading singular components of the solution through an expansion in higher-order stochastic objects. These objects may converge, after suitable renormalization, owing to cancellations arising from the probabilistic structure of the noise. This motivates a representation of the limiting solution of the form in \textbf{``high order expansion''}
\begin{equation}
u=\sum_{\tau\in\ml T'}c(\tau)X^\tau+v,
\end{equation}
where the stochastic processes \(X^\tau\), \(\tau\in\ml T'\), collectively form an enhanced stochastic process encoding the singular interactions generated by the noise.

To control the remaining singular products, such as \(\p_x(v^\sigma X^\sigma)\) in the regularized equation, one employs the theory of paracontrolled distributions introduced by Gubinelli, Imkeller, and Perkowski. Combining the stochastic expansion with a paracontrolled ansatz allows one to establish convergence in probability of the regularized solutions \(u^\sigma\) to a limiting pathwise solution \(u\) in a suitable function space, such as \(C_T\ml C^\alpha\).

More precisely, the limiting remainder \(v\) is sought in the paracontrolled form
\begin{equation}\label{eq;paracontrolled ansatz}
v=u'\prec\ml Q+u^\sharp,
\end{equation}
where the reference process \(\ml Q\) and the coefficient \(u'\) are chosen so that \((\p_t-\Delta)(u'\prec\ml Q)\) matches the least regular terms in the equation for \(v\). Subtracting this contribution yields an equation for \(u^\sharp\) with a more regular right-hand side. Thus, \(u'\prec\ml Q\) captures the leading singular behavior of \(v\), while \(u^\sharp\) has higher regularity. Even though $u'\prec \ml Q$ is also gains low regularity, we can using the ``paracontrolled commutator" in \cite{gubinelli2015paracontrolled} as
\begin{equation}
    C(u',\ml Q,X)=u'\prec\ml Q\circ X-u'(\ml Q\circ X)
\end{equation}
which is well-defined for
Together with the enhanced stochastic data and the paracontrolled commutator estimates
, this decomposition makes it possible to define the otherwise ill-defined resonant products arising in \(\p_x(vX)\).

The parameter \(\eta\) determines the regularity of the stochastic convolution and thus plays a crucial role in the analytical difficulty of the problem. Let \(X\) denote the stochastic convolution solving

$$
(\partial_t-\Delta)X=(1-\partial_x^2)^{-\eta}\xi,
\qquad X(0)=0.
$$

By the regularizing effect of the heat semigroup, for any \(\kappa<\frac32+2\eta\), one has

$$
X\in C^{\kappa-1}\cap H^{\kappa-1},
\qquad
\partial_xX\in C^{\kappa-2}\cap H^{\kappa-2}.
$$

Accordingly, the analysis naturally splits into the following four regularity regimes:
\begin{itemize}

\item When \(\eta>\frac14\), one may choose \(\kappa>2\), so that \(\partial_xX\) has positive spatial regularity. The nonlinear interactions can therefore be handled by classical product estimates.

\item When \(\frac18<\eta\leq\frac14\), \(\partial_xX\) has negative regularity, and the resonant product \(\partial_xX\circ\partial_xX\) is no longer classically well defined. Nevertheless, this resonant term can be constructed by exploiting stochastic cancellations arising from the Gaussian structure of \(X\).

\item When \(0<\eta\leq\frac18\), the stochastic cancellation at the previous level no longer provides sufficient regularity to control all nonlinear interactions. To capture the resulting higher-order singularities, we introduce the bilinear evolution operator

$$
\widetilde{B}(f,g)(t)
\triangleq
\int_0^t e^{(t-s)\Delta}
\partial_x(1-\partial_x^2)^{-1}\big(f(s)g(s)\big)\,\mathrm{d}s.
$$

In particular, the second-order stochastic term \(\widetilde B(Y_x,Y_x)\), where \(Y\) denotes the stationary stochastic convolution as 
\begin{equation}\label{eq;stationary}
    \p_t Y-\Delta Y=(1-\p_x^2)^{-\eta}\xi,\quad Y(0)=\int_{-\infty}^0e^{-s\Delta}(1-\p_x^2)^{-\eta}\xi(s)\mathrm ds,
\end{equation}
has to be incorporated into the enhanced stochastic data. This higher-order expansion allows us to isolate and control the additional singular interactions arising in this regime.

\item When \(-\frac1{12}<\eta\leq0\), mixed interactions between the stochastic component and the solution remainder, in particular \(\partial_xv\cdot\partial_xX\), may also become ill defined. We therefore introduce the paracontrolled ansatz \eqref{eq;paracontrolled ansatz}. For \((u',\mathcal Q,X)\in W^{\kappa-1}\times W^\kappa\times W^{\kappa-1}\), the key commutator satisfies

$$
C(u',\mathcal Q_x,X_x)\in W^{3\kappa-4}.
$$

This commutator is well defined provided that \(3\kappa-4>0\). Since \(\kappa\) can be chosen below \(\frac32+2\eta\), this condition is compatible with the regularity of \(X\) precisely when \(\eta>-\frac1{12}\), which explains the lower threshold in our local well-posedness theory.

\end{itemize}
Thus, as \(\eta\) decreases, the analysis passes successively from classical product estimates to stochastic cancellations, higher-order stochastic enhancements, and finally a paracontrolled formulation.

\par  Depending on the regime of \(\eta\), different techniques are required to handle the interaction between the stochastic forcing and the nonlinear structure of the viscous Camassa-Holm equation. The detailed regularity analysis is presented in Section 2. In particular, for the full range $\eta>-1/12$, we obtain the following results.
\subsection{Main results}
\begin{theorem}\label{th;main results}
    Let $u_0\in H^s$, $\eta>-1/12$, $Y(0)=\int_{-\infty}^0e^{-s\Delta}(1-\p_x^2)^{-\eta}\xi(s)\,\mathrm ds$, where $s$ and $\eta$ satisfy the following conditions:
    \begin{itemize}
        \item {$\eta\in(\frac14,\infty)$, $\kappa\in(2,\frac{3}{2}+2\eta)$, $s\in[1,\kappa)$, $u(0)=u_0$\textup{;}}
        \item {$\eta\in(\frac18,\frac14]$, $\kappa\in(\frac{3}{2},\frac{3}{2}+2\eta)$, $s\in[3-\kappa,\kappa)$, $u(0)=u_0$\textup{;}}
        \item {$\eta\in(0,\frac18]$, $\kappa\in(\frac{3}{2},\frac{3}{2}+2\eta)$, $s\in[3-\kappa,\kappa)$, $u(0)=u_0+Y(0)$\textup{;}}
        \item {$\eta\in(-1/12,0]$, $\kappa \in (\frac{5}{4}-\eta,\frac{3}{2}+2\eta)$, $s=2\kappa-1$, $u(0)=u_0+Y(0)$.}
    \end{itemize}
   Let $\xi_\sigma$ be spatial mollification of the space-time white noise \(\xi\). Then, for any $\sigma>0$, there exists unique $u^\sigma$ and $T_\sigma$ such that $u^\sigma$ is the solution of the following approximation equation
    \begin{equation}\label{eq;CH;diffusion;app}
		\left\{\begin{array}{l}
			u^\sigma_{t}+u^\sigma u^\sigma_{x}- \Delta u^\sigma=P(D)((u^\sigma)^2+\frac{1}{2}(u^\sigma_x)^2)+(1-\p_x^2)^{-\eta}\xi_\sigma,\\
			u|_{t=0}=u(0),
		\end{array}\right.
	\end{equation}
on $[0,T^\sigma]$ and $\lim_{\sigma\to0} T^\sigma=T>0$.   Moreover, there exists $u\in C([0,T);H^{\kappa-1})$ such that $\Vert u^\sigma-u\Vert_{C_{T_\sigma\wedge T}H^{\kappa-1}}\to 0$ in probability sense as $\sigma\to0$.
\end{theorem}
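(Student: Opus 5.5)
The proof follows the Da Prato--Debussche/paracontrolled scheme sketched in the introduction, carried out separately in the four regimes of $\eta$. I would structure it in three layers: (i) construction and convergence of the enhanced stochastic data, (ii) a deterministic, pathwise local well-posedness theory for the remainder equation, continuous in the data, and (iii) transfer of convergence in probability via continuity of the solution map. For fixed $\sigma>0$ the noise $(1-\partial_x^2)^{-\eta}\xi_\sigma$ is smooth in space. Existence and uniqueness of $u^\sigma$ on some $[0,T^\sigma]$ then follows from a standard fixed point in $C_TH^{s}$ for the mild formulation
\[
u^\sigma(t)=e^{t\Delta}u(0)+\int_0^te^{(t-s)\Delta}\Big(-\tfrac12\partial_x (u^\sigma)^2+P(D)\big((u^\sigma)^2+\tfrac12(u^\sigma_x)^2\big)\Big)\,\mathrm ds+X^\sigma(t).
\]
This uses that $P(D)$ gains one derivative and the heat semigroup gains nearly two.

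\textbf{Stochastic data.} Let $X^\sigma$ (resp.\ $Y^\sigma$) be the stochastic convolution (resp.\ its stationary version). By Gaussian hypercontractivity and Kolmogorov's criterion, $X^\sigma\to X$ in $L^p(\Omega;C_TW^{\kappa-1})$ for every $\kappa<\frac32+2\eta$.

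For $\eta\le\frac14$ I would compute the second Wiener chaos of the resonant product $\partial_xX^\sigma\circ\partial_xX^\sigma$. The renormalisation constant $\mathbb E|\partial_xY^\sigma|^2$ is harmless because it is killed by the derivative in $P(D)$ and in $\partial_x(u^2)$; this explains why no counterterm appears in the statement. This gives convergence in $C_T\mathcal C^{2\kappa-4}$.

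For $\eta\le\frac18$ I would further add $\widetilde B(Y^\sigma_x,Y^\sigma_x)$ and the needed resonant products of it with $Y_x$, estimated via chaos decompositions and explicit Fourier kernel bounds. Using the stationary $Y$ here is what forces $u(0)=u_0+Y(0)$ in the last two regimes.

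For $\eta\le0$ I would also need $\mathcal Q_x\circ X_x$, where $\mathcal Q$ solves $(\partial_t-\Delta)\mathcal Q=P(D)$-type linear forcing by $X_x$. Each object is a finite sum of Wiener chaoses, and the regularity count closes exactly for $\kappa$ in the stated ranges.

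\textbf{Deterministic fixed point.} Write $u=X+v$ in the first two regimes. In the third write $u=Y+\widetilde B(Y_x,Y_x)+v$. In the last write $v=v'\prec\mathcal Q+v^\sharp$. In each case I would solve the remainder equation by Banach's fixed point in $C_TH^{s}$ with $s$ as in the statement; in the last case I would use a weighted space for $(v',v^\sharp)$ with $v^\sharp\in H^{2\kappa-1}$. The products are handled as follows:
\begin{itemize}
\item $v_xX_x$ is treated by Bony's decomposition;
\item paraproducts are bounded classically;
\item resonant terms either use $s+\kappa-2>0$ (which is why $s\ge3-\kappa$), or, in the rough regime, the commutator estimate
\[
C(v',\mathcal Q_x,X_x)\in W^{3\kappa-4},\qquad 3\kappa-4>0,
\]
together with the enhanced datum $\mathcal Q_x\circ X_x$.
\end{itemize}
A small factor $T^\theta$ comes from the heat semigroup smoothing, and the contraction constant depends only on the norms of the enhanced data. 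This gives a time $T(\|u_0\|_{H^s},\|\mathbb X\|)$, and the map $(u_0,\mathbb X)\mapsto v$ is locally Lipschitz.

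\textbf{Conclusion and main obstacle.} Apply the solution map to $\mathbb X^\sigma$. Since the renormalisation constants vanish, the result coincides with $u^\sigma$ by uniqueness for smooth data. Lower semicontinuity of the existence time in the data norms yields $T^\sigma\to T$. Local Lipschitz continuity of the map together with $\mathbb X^\sigma\to\mathbb X$ in probability gives $\|u^\sigma-u\|_{C_{T_\sigma\wedge T}H^{\kappa-1}}\to0$ in probability. I expect the main obstacle to be the rough regime $-\frac1{12}<\eta\le0$. There one must check that every term generated by substituting the ansatz into $u_x^2$ and $uu_x$ is controlled, including $v^\sharp_x\circ X_x$, $(v'\prec\mathcal Q)_x\circ(v'\prec\mathcal Q)_x$, and the interactions of $\widetilde B(Y_x,Y_x)$ with $X_x$. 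These need precisely $\kappa>\frac54-\eta$. I would first work out the regularity bookkeeping for each term and only then build the extra stochastic objects that bookkeeping demands.
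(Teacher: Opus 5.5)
Your architecture matches the paper's: enhanced data, a pathwise fixed point that is locally Lipschitz in $(u_0,\m X)$, and transfer of convergence in probability. So do your decompositions $u=X+v$ and $u=Y+X^{\dag2}+w$; your $\widetilde B(Y_x,Y_x)$ is the paper's $X^{\dag2}$ up to a constant factor. As written, however, the paracontrolled step for $\eta\le0$ would not close, for two reasons.

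\textbf{The reference process.} It is not forced by a $P(D)$-type term. Since $P(D)$ gains a derivative, the least regular contribution in \eqref{w} is the transport paraproduct $-(w+Y+X^{\dag2})\prec Y_x$, of regularity $\kappa-2$, coming from $-\p_x((w+X^{\dag2})Y)-YY_x$. This term caps $w$ at $H^{\kappa}$, and the ansatz must cancel it. That forces $\ml L\ml Q=Y_x$ and $u'=-(w+Y+X^{\dag2})=-u$, as in \eqref{eq:choice-uQ}. Only then does the singular resonance $P(D)(w_x\circ Y_x)$, which does sit under $P(D)$, reduce to $u'(\ml Q_x\circ Y_x)$ plus commutators. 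A $P(D)$-forced $\ml Q$ would lie in $W^{\kappa+1}$. Then $\ml Q_x\circ Y_x$ would not even be singular, and $-u\prec Y_x$ would remain in the equation for $v^\sharp$, so $v^\sharp$ could not reach $H^{2\kappa-1}$. Your own count $C(v',\ml Q_x,X_x)\in W^{3\kappa-4}$ presupposes $\ml Q\in W^{\kappa}$, so the proposal is inconsistent here, and you never identify $v'=-u$.

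\textbf{Time regularity.} Closing the ansatz requires controlling $\ml L(u'\prec\ml Q)-u'\prec\ml L\ml Q$. This depends on the time behaviour of $u'=-u$, and $\ml L u$ contains the noise itself. A fixed point in spaces that measure only spatial regularity, used with the ordinary paraproduct, cannot control it. The paper builds time H\"older regularity into its spaces, $\widetilde W^\alpha_p=C_TB^\alpha_{p,p}\cap C_T^{\alpha/2}L^p$. It also works with the time-mollified paraproduct $\pprec$, via Lemma \ref{le;est;paraproduct;modifed} and the commutator lemmas of Appendix B. Your space for $(v',v^\sharp)$ needs an equivalent device, for example $v'\in C_T^{(\kappa-1)/2}L^\infty$ together with an estimate of the heat-flow commutator.

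\textbf{Smaller points.}

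The renormalised product $\p_xX\circ\p_xX$ converges only for $\eta>1/8$. The variance of its $l$-th Littlewood--Paley block contains $\sum_{i\gtrsim l}2^{i(1-8\eta)}$, which is exactly the threshold in Lemma \ref{le;resonant}. So below $1/8$ it must not be constructed, and only the time-integrated object $X^{\dag2}$ exists. Its products with $Y_x$ are classical, since $3\kappa-4>0$ for $\kappa$ near $\frac32+2\eta$.

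Your remark that renormalisation constants are killed by derivatives does not cover $\ml Q_x\circ Y_x$. Its mean would multiply $u'$, so $P(D)$ does not remove it. It is harmless only because that mean vanishes, its Fourier symbol being odd in $k$.

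The condition $s\ge3-\kappa$ comes from $v_x\circ X_x\in H^{s+\kappa-3}$, not from $s+\kappa-2>0$.

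In the last two regimes $u_0+Y(0)\notin H^s$. Your classical $C_TH^s$ fixed point at fixed $\sigma$ therefore does not apply to that initial datum. The identification of the solution map at $\m X^\sigma$ with $u^\sigma$ also needs the regularised problem to start from $u_0+Y_\sigma(0)$. This is effectively what the paper's proof does, using $\phi_\sigma*u_0$ and $Y_\sigma$.
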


\begin{remark}
The main difference between \(X\) and \(Y\) is that \(Y\) is chosen to be the stationary solution of the linear stochastic heat equation
\begin{equation}
\partial_t Y-\Delta Y=(1-\partial_x^2)^{-\eta}\xi,
\qquad
Y(0)=\int_{-\infty}^{0}e^{-s\Delta}
(1-\partial_x^2)^{-\eta}\xi(s)\,\mathrm ds.
\end{equation}
In particular, \(Y\) is stationary in time and admits an explicit covariance structure. This stationarity is particularly convenient for the construction and analysis of higher-order stochastic objects, since all such objects can be built from a single stationary reference process. In particular, it provides a unified framework for constructing the higher-order enhancements required in the non-Gaussian rough-path setting.
\end{remark}

Singularity of nonlinearity lead the solution should be work in weak sense, the regular case allow us to study it in mild sense, we also obtain the global results in $H^s$ space by energy estimate as follows
\begin{theorem}\label{thm;global solution}
    Let $\eta>1/4$ and $u_0\in  H^1$ satisfying $\int_{\mathbb{T}}u(x)\,\mathrm{d}x=0$, then \eqref{eq;CH;diffusion} has a global solution in $L_T^\infty  H^1\cap L_T^2 H^2$.
\end{theorem}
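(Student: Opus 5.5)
We prove Theorem \ref{thm;global solution} with the splitting $u=X+v$, where $X$ is the stochastic convolution with $X(0)=0$. We build a local mild solution for $v$, derive an a priori energy bound for $v$ in $L^\infty_TH^1\cap L^2_TH^2$ that does not blow up in finite time, and then extend globally by continuation. For $\eta>1/4$ we can take $\kappa\in(2,\frac32+2\eta)$. Then almost surely $X\in C_T\mathcal C^{\kappa-1}\cap C_TH^{\kappa-1}$ with $\kappa-1>1$, so $X,X_x\in L^\infty_TL^\infty_x$. The remainder $v$ solves the random PDE
\begin{equation*}
\partial_t v-\Delta v=-(v+X)(v+X)_x+P(D)\Big((v+X)^2+\tfrac12 (v+X)_x^2\Big),\qquad v(0)=u_0 .
\end{equation*}
Every product here is classically defined. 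The first bullet of Theorem \ref{th;main results}, applied pathwise with $s=1$, therefore gives a local solution $v\in C([0,T_*);H^1)\cap L^2_{loc}H^2$, and if $T_*<\infty$ then $\|v(t)\|_{H^1}\to\infty$ as $t\to T_*$. It remains to rule this out.

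\textbf{Energy estimate.} We test the equation for $v$ against $(1-\partial_x^2)v$, i.e.\ we compute $\frac{d}{dt}\|v\|_{H^1}^2$. Writing the equation in the original form $(1-\partial_x^2)(v_t-v_{xx})+3vv_x-2v_xv_{xx}-vv_{xxx}=\text{(terms with }X)$, the pure $v$-nonlinearity contributes exactly zero. This is the classical conservation of $\int (v^2+v_x^2)$ for Camassa--Holm. Hence
\begin{equation*}
\tfrac12\tfrac{d}{dt}\|v\|_{H^1}^2+\|v_x\|_{L^2}^2+\|v_{xx}\|_{L^2}^2=\mathcal R(v,X),
\end{equation*}
where $\mathcal R$ collects the cross terms. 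In $H^1$-pairing form these are $\langle -(vX)_x-XX_x+P(D)(2vX+X^2+v_xX_x+\tfrac12X_x^2),v\rangle_{H^1}$. We then move derivatives: we use that $(1-\partial_x^2)P(D)=-\partial_x$, and we place at most one derivative on $X$ by integrating by parts onto $v$. This gives bounds of the form
\begin{equation*}
|\mathcal R|\le C\big(1+\|X\|_{W^{1,\infty}}^2\big)\big(1+\|v\|_{H^1}^2\big)+\tfrac12\|v_{xx}\|_{L^2}^2 .
\end{equation*}
For example, the term $\int (vX)_x\,(1-\partial_x^2)v=\int (vX)_x v-\int (vX)_{xx}v_x$. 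The second piece is handled as $\int (vX)_x v_{xx}$, which is bounded by $\|X\|_{W^{1,\infty}}\|v\|_{H^1}\|v_{xx}\|_{L^2}$ and then absorbed with Young's inequality. The term $-\partial_x(v_xX_x)$ is paired with $v$ and is bounded by $\|X_x\|_{L^\infty}\|v_x\|_{L^2}^2$. The purely stochastic forcing terms give at most $\|X\|_{W^{1,\infty}}^2(1+\|v\|_{H^1})$.

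\textbf{Conclusion and main obstacle.} Gronwall's inequality then yields, pathwise,
\begin{equation*}
\sup_{t\le T}\|v(t)\|_{H^1}^2+\int_0^T\|v\|_{H^2}^2\le \big(\|u_0\|_{H^1}^2+C_T(X)\big)\exp\Big(C\int_0^T(1+\|X\|_{W^{1,\infty}}^2)\Big),
\end{equation*}
which is finite for every $T$. So $T_*=\infty$, and $u=X+v\in L^\infty_TH^1\cap L^2_TH^2$, because $X$ also lies in these spaces since $\kappa-1>1$. The zero-mean assumption is used to control the zero mode: $P(D)$ kills constants, and Poincar\'e's inequality lets us bound $\|v\|_{L^2}$ by $\|v_x\|_{L^2}$ modulo the mean, whose evolution is driven only by the bounded $X$-terms. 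The main obstacle is the cancellation step, i.e.\ checking that the cubic $v$-terms vanish exactly in the $H^1$ pairing. This must be done carefully at the level of the nonlocal form \eqref{eq;viscous CH equivalent form}, using $\int v^2v_x=0$ and $\int v_x^2v_{xx}=0$, together with the identity $\langle P(D)f,v\rangle_{H^1}=-\int \partial_x f\, v$. Once the cancellation holds, every remaining term is at most quadratic in $v$ with $L^\infty$-coefficients coming from $X$, and the Gronwall argument closes.
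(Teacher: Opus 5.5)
Your core argument is the paper's: the splitting $u=X+v$, the $H^1$ energy identity, Gronwall, and continuation. Testing with $(1-\partial_x^2)v$ is exactly the paper's multiplication of the equation for $y=v-v_{xx}$ by $v$, with the same exact cancellation of the cubic terms. The proposal does contain one false step, namely your final sentence: ``$u=X+v\in L^\infty_TH^1\cap L^2_TH^2$, because $X$ also lies in these spaces since $\kappa-1>1$.''

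The condition $\kappa-1>1$ gives $X\in C_TW^{\kappa-1}\subset L^\infty_TH^1$, but not $X\in L^2_TH^2$. Since $\kappa<\frac32+2\eta$, $H^2$-regularity of $X$ would require $\eta>\frac34$. Indeed, by the covariance in Lemma~\ref{X_bdd},
\[
\mathbb E\int_0^T\|X(t)\|_{H^2}^2\,\mathrm dt\simeq\sum_{k\neq0}(1+k^2)^{2-2\eta}\int_0^T\frac{1-e^{-2tk^2}}{2k^2}\,\mathrm dt\simeq T\sum_{k\neq0}|k|^{2-4\eta},
\]
which diverges for $\eta\le\frac34$. Since modes with different $|k|$ are independent, $\int_0^T\|X\|_{H^2}^2\,\mathrm dt=\infty$ almost surely. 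So for $\eta\in(\frac14,\frac34]$ your argument proves $v\in L^\infty_TH^1\cap L^2_TH^2$ and $u=X+v\in C_TH^1$. This is also all the paper's proof concludes ($v\in C([0,\bar T];\dot H^1)\cap L^2([0,\bar T];\dot H^2)$). The $L^2_TH^2$ regularity belongs to the remainder $v$, not to $u$, and the conclusion must be stated that way.

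Apart from this, your continuation mechanism differs from the paper's, and it is a reasonable alternative. You use the $C_TH^1$ mild theory of Proposition~\ref{local_thm} with $s=1$. Its existence time depends only on $\|v(t_0)\|_{H^1}$ and $\|X\|_{C_{[t_0,t_0+1]}W^{\kappa-1}}$; say explicitly that restarting at $t_0$ with the given forcing $X$ uses nothing else. Then a uniform $H^1$ bound rules out blow-up directly. The paper instead builds a separate $L^4_T\dot H^{3/2}$ theory (Lemma~\ref{le;exist and blow-up}) and contradicts $\int_0^{T_{u_0}}\|v\|_{\dot H^{3/2}}^4\,\mathrm dt=\infty$ by interpolating $L^\infty_T\dot H^1$ and $L^2_T\dot H^2$. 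Yours is shorter and reuses existing machinery. Likewise, because you never put more than one derivative on $X$, you only need $\|X\|_{W^{1,\infty}}$. The paper's expression \eqref{eq;f-fxx}, by contrast, contains $X_{xx}$, which has negative regularity when $\kappa<3$.

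Minor points:
\begin{itemize}
\item The identities you cite for the cancellation are not the operative ones. What actually happens is that $\langle -vv_x,(1-\partial_x^2)v\rangle=\int vv_xv_{xx}=-\frac12\int v_x^3$ cancels $\langle P(D)(v^2+\frac12v_x^2),v\rangle_{H^1}=\int(v^2+\frac12v_x^2)v_x=\frac12\int v_x^3$.
\item The term $\int XX_xv_{xx}$ produces $\|X\|_{W^{1,\infty}}^4$ after Young's inequality. This is harmless inside $C_T(X)$.
\item The zero-mean hypothesis plays no role in your argument. Every forcing term in the $v$-equation is an exact derivative, so $\int v$ is conserved, and you work with the inhomogeneous $H^1$ norm throughout.
\end{itemize}
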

\begin{remark}
    It is worth emphasizing that the parameter range for global well-posedness is more restrictive than that for local well-posedness. While the local theory can be established for the range $\eta>-1/12$, the global analysis requires additional regularity of the stochastic convolution in order to obtain uniform a priori estimates in $H^1$. Consequently, we restrict our global well-posedness result to the regime $\eta>1/4$. It is worth mentioning that extending the global well-posedness theory to $\eta\leq1/4$ regime remains a challenging problem. In such regimes, the stochastic forcing exhibits stronger irregularity, and the classical energy estimates (See Proposition \ref{esti;energy esti}) may no longer be sufficient to control the nonlinear interactions. A possible approach would require combining rough path techniques with suitable energy methods to handle the interplay between the rough stochastic perturbation and the nonlinear Camassa--Holm structure. We leave this interesting direction for future investigation.
\end{remark}

\subsection{Organization}
    \par The remainder of this paper is organized as follows. In Section \ref{sec;pre}, we introduce the preliminary materials and analytical tools used throughout the paper. Section \ref{sec;enhanced data} is devoted to the construction of the enhanced stochastic process. In Section \ref{sec;wellposedness}, we establish the local well-posedness in the regime $\eta>-1/12$. Section \ref{sec;global well-posedness} is devoted to the global well-posedness in the regular regime $\eta>1/4$. 

\section{Preliminaries}\label{sec;pre}
In this section, we first recall the Littlewood-Paley decomposition and the Bony decomposition, together with a modified version thereof.

\begin{proposition}\cite{bahouriFourierAnalysisNonlinear2011}\label{prop0}
Let $\mathcal{B}=\{\xi\in\mathbb{R}^d: |\xi|\leq \frac 43\}$ and $\mathcal{C}=\{\xi\in\mathbb{R}^d: \frac 34 \leq|\xi|\leq \frac 83\}$. There exist two smooth, radial functions $\chi$ and $\varphi$, valued in the interval $[0,1]$, belonging respectively to $\mathcal{D(B)}$ and $\mathcal{D(C)}$, such that
$$
	\forall\ \xi\in\mathbb{R}^{d},\ \chi(\xi)+\sum_{j\geq0}\varphi(2^{-j}\xi)=1, 
$$
$$
	|j-j^{\prime}|\geq2\Rightarrow\mathrm{Supp~}\varphi(2^{-j}\cdot)\cap\mathrm{Supp~}\varphi(2^{-j^{\prime}}\cdot)=\emptyset, 
$$
$$
	j\geq1\Rightarrow\mathrm{Supp~}\chi\cap\mathrm{Supp~}\varphi(2^{-j}\cdot)=\emptyset.
$$
Let $u\in\mathcal{S}'$. Defining
$$
\Delta_{j}u\triangleq0\ \text{if}\ j\leq-2,\ \Delta_{-1}u\triangleq\chi(D)u=\mathcal{F}^{-1}(\chi\mathcal{F}u),
$$
$$
\Delta_{j}u\triangleq\varphi(2^{-j}D)u=\mathcal{F}^{-1}(\varphi(2^{-j}\cdot)\mathcal{F}u)\ \text{if}\ j\geq0,
$$
$$
S_j u\triangleq\sum_{j^{\prime}\leq j-1}\Delta_{j^{\prime}}u,
$$
we have the following Littlewood-Paley decomposition:
$$u=\sum_{j\in\mathbb{Z}}\Delta_{j}u\quad in\ \mathcal{S}'.$$
\end{proposition}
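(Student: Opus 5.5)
The statement immediately above is Proposition \ref{prop0}, the classical existence of a dyadic partition of unity and the resulting Littlewood--Paley decomposition. The plan is the standard construction from \cite{bahouriFourierAnalysisNonlinear2011}: build $\chi$ and $\varphi$ explicitly from a single radial bump, check the support properties, and then obtain the decomposition of $u\in\mathcal S'$ by passing to the limit in the partial sums $S_ju$.

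\textbf{Construction.} Fix $\theta\in C_c^\infty(\mathbb R^d)$, radial, valued in $[0,1]$, with $\theta\equiv1$ on $\{|\xi|\le 3/4\}$ and $\mathrm{Supp}\,\theta\subset\{|\xi|\le 4/3\}$. One can take $\theta(\xi)=\rho(|\xi|)$ with $\rho$ smooth, nonincreasing, equal to $1$ on $[0,3/4]$ and to $0$ on $[4/3,\infty)$. Set $\chi=\theta$ and $\varphi(\xi)=\theta(\xi/2)-\theta(\xi)$. Since $\rho$ is nonincreasing, $\varphi\ge0$, and clearly $\varphi\le1$. Moreover $\varphi$ vanishes where $|\xi|\le3/4$, because there both terms equal $1$, and where $|\xi|\ge 8/3$, because there both vanish. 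Hence $\mathrm{Supp}\,\varphi\subset\mathcal C$. The sum telescopes:
$$\chi(\xi)+\sum_{j=0}^{N}\varphi(2^{-j}\xi)=\theta(2^{-N-1}\xi),$$
which tends to $1$ pointwise as $N\to\infty$. The sum is locally finite, so the identity holds everywhere.

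\textbf{Support properties.} The support of $\varphi(2^{-j}\cdot)$ lies in the annulus $\{\tfrac34 2^{j}\le|\xi|\le\tfrac83 2^{j}\}$. If $j'\ge j+2$, then $\tfrac34 2^{j'}\ge 3\cdot2^{j}>\tfrac83 2^j$, so the two supports are disjoint. Likewise, for $j\ge1$ we have $\tfrac34 2^j\ge\tfrac32>\tfrac43$, so the support of $\chi$ misses the support of $\varphi(2^{-j}\cdot)$. One should check that the supports are genuinely disjoint, not merely that the interiors are, since the inequalities above are strict.

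\textbf{Convergence in $\mathcal S'$.} By the telescoping identity, $S_{N+1}u=\theta(2^{-N-1}D)u$. For $\phi\in\mathcal S$, duality gives
$$\langle S_{N+1}u,\phi\rangle=\langle u,\theta(2^{-N-1}D)\phi\rangle,$$
using that $\theta$ is even. It therefore suffices to show $\theta(2^{-N}D)\phi\to\phi$ in $\mathcal S$. On the Fourier side, $(1-\theta(2^{-N}\xi))\hat\phi$ is supported in $\{|\xi|\ge\tfrac34 2^N\}$. Each of its derivatives is bounded by $C\langle\xi\rangle^{m}|\partial^\beta\hat\phi|$ on that set, using that $2^{-N|\alpha|}$ times $\partial^\alpha\theta$ is uniformly bounded. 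Multiplying by an arbitrary polynomial weight and using the rapid decay of $\hat\phi$ on $|\xi|\gtrsim 2^N$, every Schwartz seminorm tends to $0$. The Fourier transform is continuous on $\mathcal S$, so the convergence passes back to physical space, and hence $u=\sum_j\Delta_ju$ in $\mathcal S'$.

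The main obstacle is the only non-algebraic step: the seminorm estimate showing that $\theta(2^{-N}D)\phi\to\phi$ in $\mathcal S$. Everything else is elementary bookkeeping on supports. On $\mathbb T$, as in the paper's setting, the same argument applies to Fourier series with test functions in $C^\infty(\mathbb T)$.
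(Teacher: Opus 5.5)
Your proof is correct. The paper gives no argument of its own here; it quotes the proposition from \cite{bahouriFourierAnalysisNonlinear2011}, so the comparison is with the proof there.

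Your convergence step is the same as theirs. By duality and evenness of the cutoff, $S_Nu\to u$ in $\mathcal S'$ reduces to $\theta(2^{-N}D)\phi\to\phi$ in $\mathcal S$. This is checked on the Fourier side, using that $1-\theta(2^{-N}\cdot)$ vanishes on $\{|\xi|\le\frac34 2^N\}$ and has derivatives bounded uniformly in $N$. One small imprecision: on that region the Leibniz rule gives a bound by $\sum_{\gamma\le\beta}|\partial^\gamma\hat\phi|$, not by $|\partial^\beta\hat\phi|$ alone, and no weight $\langle\xi\rangle^m$ is needed. This changes nothing.

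The construction of $(\chi,\varphi)$ is where you genuinely differ. In \cite{bahouriFourierAnalysisNonlinear2011} one starts from the high-frequency piece. A radial $\theta\in\mathcal D(\mathcal C)$ with values in $[0,1]$, equal to $1$ on a slightly thinner annulus, is normalized as $\varphi=\theta/\sum_{j\in\mathbb Z}\theta(2^{-j}\cdot)$; the sum is locally finite and at least $1$ away from the origin. Then $\chi$ is defined as $1-\sum_{j\ge0}\varphi(2^{-j}\cdot)$, and one checks that $\chi$ equals $1$ near $0$ and vanishes for $|\xi|\ge\frac43$. You instead start from the low-frequency cutoff and take telescoping differences.

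Your route is more direct. The identity $\chi+\sum_{j=0}^{N-1}\varphi(2^{-j}\cdot)=\theta(2^{-N}\cdot)$, which is exactly what the $\mathcal S'$ convergence uses, is built in. In the normalization route it needs the rescaling computation $\chi(2^{-N}\xi)=1-\sum_{j\ge N}\varphi(2^{-j}\xi)$. The price is that $\varphi\ge0$ must come from choosing the profile $\rho$ nonincreasing, which you correctly impose. The normalization route needs no monotonicity and has the homogeneous identity $\sum_{j\in\mathbb Z}\varphi(2^{-j}\xi)=1$ for $\xi\neq0$ as its starting point. Your construction also yields that identity, since $\sum_{j=-M}^{N}\varphi(2^{-j}\xi)=\theta(2^{-N-1}\xi)-\theta(2^{M}\xi)$.
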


\begin{definition}\cite{bahouriFourierAnalysisNonlinear2011}\label{Besov Space}
    Let $s\in\mathbb{R}$ and $(p,r)\in[1,\infty]^2$. The nonhomogeneous Besov space $B^s_{p,r}$ consists of all $u\in\mathcal{S}'$ such that
    $$
    \|u\|_{B_{p,r}^s}\triangleq\left\|(2^{js}\|\Delta_ju\|_{L^p})_{j\in\mathbb{Z}}\right\|_{\ell^r(\mathbb{Z})}<\infty.
    $$
\end{definition}
\begin{definition}\cite{bahouriFourierAnalysisNonlinear2011}\label{Bony decomposition}
Considering $u,v\in\mathcal{S}'$, we have the following Bony decomposition:
$$uv=u\prec v+v\prec u+u\circ v,$$
where
$$
u\prec v\triangleq \sum_j S_{j-1}u\Delta_j v\quad \text{and}\quad u\circ v\triangleq\sum_{|k-j|\leq1}\Delta_ku\Delta_jv
$$ 
are called the nonhomogeneous para-product of $v$ by $u$ and the nonhomogeneous remainder of $u$ and $v$, respectively.
\end{definition}
\par We now introduce some function spaces that will be useful in our analysis. 
For convenience, denote
\[
    \ml C^\alpha:=B_{\infty,\infty}^\alpha,\qquad
    H^\alpha:=B_{2,2}^\alpha,\qquad
    W^\alpha:=\ml C^\alpha\cap H^\alpha.
\]
We introduce the space $C_{[s,t]}X$ equipped with the norm
\begin{equation}
    \Vert f\Vert_{C_{[s,t]}X}=\sup_{s\leq\tau\leq t}\Vert f(\tau)\Vert_X.
\end{equation}
For $\beta\in(0,1)$, we also define the space $C^\beta_{[s,t]}X$ endowed with the seminorm
\begin{equation}
    \Vert f\Vert_{C^\beta_{[s,t]}X}
    :=\sup_{s\leq r<\tau\leq t}
    \frac{\Vert f(r)-f(\tau)\Vert_X}{\vert r-\tau\vert^\beta}.
\end{equation}
In particular, we denote
\begin{equation}
    \Vert f\Vert_{C_TX}=\Vert f\Vert_{C_{[0,T]}X},\quad 
    \Vert f\Vert_{C^\beta_T X}=\Vert f\Vert_{C^\beta_{[0,T]} X}.
\end{equation}
Moreover, define
\[
    \widetilde{W}^\alpha([s,t])
    =\widetilde{W}_2^\alpha([s,t])\cap\widetilde{W}_\infty^\alpha([s,t]),
    \qquad
    \widetilde{H}^\alpha([s,t])=\widetilde{W}_2^\alpha([s,t]),
\]
where
\[
    \widetilde{W}_p^\alpha([s,t])
    =C_{[s,t]}B_{p,p}^\alpha\cap C_{[s,t]}^{\frac{\alpha}{2}}L^p,
    \qquad \alpha\in(0,2).
\]
Similarly, we write
\begin{align*}
    \widetilde W_p^\alpha(T)=\widetilde{W}_p^\alpha([0,T]).
\end{align*}

For later use, we introduce a modified version of the paraproduct. Let 
$\varphi\in C_c^\infty(\m R,\m R_+)$ be nonnegative, with compact support contained in $\m R_+$ and total mass $1$, and define $Q_i$ for all $i\geq-1$ by
\begin{align*}
    Q_i:C\ml C^\alpha\to C\ml C^\alpha,\quad 
    Q_if(t)=\int_{\m R}2^{2i}\varphi(2^{2i}(t-s))f(s\vee0)\,\mathrm{d}s,
\end{align*}
where $\ml C^\alpha\triangleq B^\alpha_{\infty,\infty}(\m T,\m R)$ for $\alpha\in\m R$. 
We will often apply $Q_i$, as well as other operators on $C\ml C^\alpha$, to functions 
$f\in C_T\ml C^\alpha$. In this case, we extend $f$ from $[0,T]$ to $\m R_+$ by setting 
$f(t)=f(t\wedge T)$. With the help of $Q_i$, we define the modified paraproduct by
\begin{equation}\label{eq;para-product;modify}
    f\pprec g=\sum_i(Q_iS_{i-1}f)\Delta_i g
\end{equation}
for $f,g\in C(\m R_+,\ml D'(\m T))$.

Let $\ml L=\p_t-\Delta$. We have the following lemmas.
\begin{lemma}[\cite{zhang2026wellposednessregularityfractionalrough}]\label{le;est;paraproduct;modifed}
    Let $\gamma,T\in(0,1)$, $\alpha<0$, $\rho\in(0,2)$ satisfy  $0<\alpha+\rho<2$. Let $f\in \widetilde{W}^\gamma_\infty(T)$, $g(0)=0$, $g\in C_T W^{\alpha+2-\gamma}$ and $\ml Lg\in C_T W^\alpha$. Then we have
    \begin{align*}
        \Vert f\pprec g\Vert_{\widetilde{W}_\infty^{\alpha+\rho}(T)}\lesssim  T^{1-\frac{\rho}{2}}\Vert f\Vert_{\widetilde{W}^\gamma_\infty(T)}(\Vert g\Vert_{C_T\ml C^{\alpha+2-\gamma}}+\Vert \ml Lg\Vert_{C_T\ml C^{\alpha}})
    \end{align*} 
    and
    \begin{align*}
        \Vert f\pprec g\Vert_{\widetilde{W}_2^{\alpha+\rho}(T)}\lesssim  T^{1-\frac{\rho}{2}}\Vert f\Vert_{\widetilde{W}^\gamma_\infty(T)}(\Vert g\Vert_{C_TH^{\alpha+2-\gamma}}+\Vert \ml Lg\Vert_{C_TH^{\alpha}}).
    \end{align*} 
\end{lemma}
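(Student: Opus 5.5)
The estimate follows from a dyadic analysis of the modified paraproduct $f\pprec g=\sum_i (Q_iS_{i-1}f)\Delta_i g$, in the spirit of the corresponding lemma in \cite{gubinelli2017kpz}. Each block $(Q_iS_{i-1}f)\Delta_i g$ has spatial Fourier support in an annulus of size $2^i$. It is therefore enough to estimate $\|(Q_iS_{i-1}f)(t)\Delta_i g(t)\|_{L^p}$ for $p=\infty$ and $p=2$, together with the time increments of these blocks. The gain $T^{1-\rho/2}$ comes from $g(0)=0$: writing $g(t)=g(t)-g(0)$ lets us trade spatial regularity for a power of $t\le T$. Throughout, $Q_iS_{i-1}f$ is bounded in $L^\infty$ by $\|f\|_{C_TL^\infty}$, uniformly in $i$.

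\emph{Spatial part, $C_TB^{\alpha+\rho}_{p,p}$.} Expand $\Delta_j g(t)=\int_0^t e^{(t-s)\Delta}\Delta_j\ml Lg(s)\,\mathrm ds$, which is valid because $g(0)=0$. The heat kernel on an annulus of size $2^j$ gives
\[
\|\Delta_j g(t)\|_{L^p}\lesssim \int_0^t e^{-c(t-s)2^{2j}}2^{-j\alpha}\,\mathrm ds\,\|\ml Lg\|_{C_T B^\alpha_p}\lesssim 2^{-j\alpha}\min(t,2^{-2j})\,\|\ml Lg\|.
\]
Interpolating with $\min(t,2^{-2j})\le t^{1-\rho/2}2^{-j\rho}$ yields $2^{j(\alpha+\rho)}\|\Delta_jg(t)\|_{L^p}\lesssim T^{1-\rho/2}\|\ml Lg\|_{C_TW^\alpha}$. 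Multiplying by the $L^\infty$ bound on $Q_jS_{j-1}f$ and summing over the finitely many $j$ interacting with a given Littlewood--Paley block gives the $C_T\ml C^{\alpha+\rho}$ bound. The same argument with $\ell^2$ summation in $j$ gives the $C_TH^{\alpha+\rho}$ bound, since $f$ enters only through an $L^\infty$ multiplier.

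\emph{Time regularity, $C^{(\alpha+\rho)/2}_TL^p$.} This is the main obstacle, because $\alpha<0$ means $f\pprec g$ is summed over blocks with no spatial decay beyond $2^{-j(\alpha+\rho)}$. For $\tau<t$ with $\delta=t-\tau$, split
\[
(f\pprec g)(t)-(f\pprec g)(\tau)=\sum_j Q_jS_{j-1}f(t)\bigl(\Delta_jg(t)-\Delta_jg(\tau)\bigr)+\sum_j\bigl(Q_jS_{j-1}f(t)-Q_jS_{j-1}f(\tau)\bigr)\Delta_jg(\tau).
\]
\begin{itemize}
\item \emph{First sum.} For $\Delta_jg(t)-\Delta_jg(\tau)$, use the Duhamel expansion $(e^{\delta\Delta}-1)\Delta_jg(\tau)+\int_\tau^t e^{(t-s)\Delta}\Delta_j\ml Lg\,\mathrm ds$. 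The first piece is bounded by $\min(1,\delta2^{2j})\|\Delta_jg(\tau)\|_{L^p}$, and we use the spatial bound just obtained with a slightly larger $\rho'$. The second piece is bounded by $\min(\delta,2^{-2j})2^{-j\alpha}$. Summing over $j$, splitting at $2^{2j}\sim\delta^{-1}$, gives $\delta^{(\alpha+\rho)/2}T^{\epsilon}$ whenever $0<\alpha+\rho<2$. Alternatively, the low-frequency regime can use $\|g\|_{C_T\ml C^{\alpha+2-\gamma}}$, giving $\delta\,2^{j(2-\alpha-\rho)}$-type sums, and the high-frequency regime can use the bound $2^{-j(\alpha+\rho)}$.
\item \emph{Second sum.} The time mollifier $Q_j$ at scale $2^{-2j}$ together with $f\in C^{\gamma/2}_TL^\infty$ gives $\|Q_jS_{j-1}f(t)-Q_jS_{j-1}f(\tau)\|_{L^\infty}\lesssim\min(\delta,2^{-2j})^{\gamma/2}\|f\|_{\widetilde W^\gamma_\infty}$. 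Pair this with $\|\Delta_jg(\tau)\|_{L^p}\lesssim2^{-j(\alpha+2-\gamma)}\|g\|_{C_TW^{\alpha+2-\gamma}}$. The resulting sum $\sum_j\min(\delta,2^{-2j})^{\gamma/2}2^{-j(\alpha+2-\gamma)}$ is comparable to $\delta^{(\alpha+2)/2}$ when $\alpha+2>\gamma$, and this dominates $\delta^{(\alpha+\rho)/2}\,T^{1-\rho/2}$ since $\rho<2$.
\end{itemize}

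The delicate bookkeeping is to extract the factor $T^{1-\rho/2}$ uniformly in all pieces rather than a bare $\delta$-power. Where a term yields $\delta^{(\alpha+2)/2}$ or $\delta^{(\alpha+\rho')/2}$ with $\rho'>\rho$, we write $\delta^{(\alpha+\rho')/2}=\delta^{(\alpha+\rho)/2}\delta^{(\rho'-\rho)/2}\le\delta^{(\alpha+\rho)/2}T^{(\rho'-\rho)/2}$ and choose exponents so that the total $T$-power equals $1-\rho/2$. The remaining freedom is absorbed by the conditions $\gamma\in(0,1)$ and $0<\alpha+\rho<2$. The $L^2$ version repeats the argument verbatim, using $\ell^2$ summation and the $H$-norms of $g$ and $\ml Lg$.
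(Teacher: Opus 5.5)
\textbf{There is a genuine gap in the time-increment estimate for $f$ (your second sum).}

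Your spatial estimate and your first sum are sound, with one small repair. For the $H^{\alpha+\rho}$ bound you cannot take the block-wise $\sup_j$ estimate and then ``sum in $\ell^2$''. Keep $\Vert\Delta_j\ml Lg(s)\Vert_{L^2}$ inside the Duhamel integral and apply Minkowski:
$\Vert(2^{j(\alpha+\rho)}\Vert\Delta_jg(t)\Vert_{L^2})_j\Vert_{\ell^2}\le\int_0^t\sup_j 2^{j\rho}e^{-c(t-s)2^{2j}}\Vert\ml Lg(s)\Vert_{H^\alpha}\,\mathrm ds\lesssim t^{1-\rho/2}\Vert\ml Lg\Vert_{C_TH^\alpha}$.

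The real problem is your bound $\Vert Q_jS_{j-1}f(t)-Q_jS_{j-1}f(\tau)\Vert_{L^\infty}\lesssim\min(\delta,2^{-2j})^{\gamma/2}$.
\begin{itemize}
\item It is false for $\delta>2^{-2j}$. Take $f(t,x)=t^{\gamma/2}$. Then $S_{j-1}f=f$ for $j\ge1$, and $Q_jf(\delta)-Q_jf(0)\approx\delta^{\gamma/2}\gg 2^{-j\gamma}$. Mollifying in time does not cap large increments.
\item For $\delta\le2^{-2j}$ it gives only $\delta^{\gamma/2}$. Your sum is then not $\delta^{(\alpha+2)/2}$. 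When $\alpha+2>\gamma$, the low-frequency part $\delta^{\gamma/2}\sum_{j\ge1,\,2^{2j}\le\delta^{-1}}2^{-j(\alpha+2-\gamma)}$ is dominated by the lowest block and is $\sim\delta^{\gamma/2}$.
\end{itemize}
Once $\alpha+\rho>\gamma$, $\delta^{\gamma/2}$ is not $O(\delta^{(\alpha+\rho)/2})$. That regime is allowed by the hypotheses, and it is exactly how the lemma is used in Proposition~\ref{prop;paracontrolled well-posedness}, where $f=u'$, $\gamma=\kappa-1$, and the target space is $\widetilde H^{\kappa}$. So your argument yields at most $C^{\gamma/2}_T$ time regularity. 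H\"older continuity of $f$ alone cannot give more. The missing idea is the smoothing effect of the time mollifier $Q_j$, which is the whole point of the modified paraproduct.

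\textbf{How to repair it.} Since $\int\varphi'=0$, you have $\p_tQ_jh(t)=\int 2^{4j}\varphi'(2^{2j}(t-s))\big(h(s\vee0)-h(t)\big)\,\mathrm ds$. Hence $\Vert\p_tQ_jS_{j-1}f\Vert_{L^\infty}\lesssim 2^{j(2-\gamma)}\Vert f\Vert_{C^{\gamma/2}_TL^\infty}$, so the correct increment bound is $\min(\delta^{\gamma/2},\delta 2^{j(2-\gamma)})$.
\begin{itemize}
\item For $2^{2j}\le\delta^{-1}$, pair this Lipschitz bound with $\Vert\Delta_jg\Vert_{L^p}\lesssim 2^{-j(\alpha+2-\gamma)}\Vert g\Vert_{C_TW^{\alpha+2-\gamma}}$. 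This gives $\delta\sum_{2^{2j}\le\delta^{-1}}2^{-j\alpha}\lesssim\delta^{(\alpha+2)/2}$; here $\alpha<0$ is used.
\item For $2^{2j}>\delta^{-1}$, do not difference $f$ at all. Bound the tails of $(f\pprec g)(t)$ and $(f\pprec g)(\tau)$ separately by your block estimate: $\Vert f\Vert_{C_TL^\infty}\sum_{2^{2j}>\delta^{-1}}2^{-j(\alpha+\rho)}T^{1-\rho/2}\lesssim\delta^{(\alpha+\rho)/2}T^{1-\rho/2}$; here $\alpha+\rho>0$ is used. Pairing an $f$-increment with $2^{-j(\alpha+2-\gamma)}$ at high frequencies would even diverge when $\alpha+2\le\gamma$.
\end{itemize}
With this change your direct dyadic proof closes.

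\textbf{Comparison with the paper.} The paper gives no proof and only quotes the lemma. With the tools it quotes alongside, the natural argument is short:
\begin{itemize}
\item $(f\pprec g)(0)=0$ because $g(0)=0$, so Lemma~\ref{le;heat flow;time smoothing} with gain $\rho$ reduces everything to $T^{1-\rho/2}\Vert\ml L(f\pprec g)\Vert_{C_TW^\alpha}$.
\item Write $\ml L(f\pprec g)=f\pprec\ml Lg+[\ml L(f\pprec g)-f\pprec\ml Lg]$, and control the commutator by Lemma~\ref{le;commutator;modfied paraproduct, L} with the pair $(\gamma,\alpha+2-\gamma)$.
\end{itemize}
That commutator estimate is where the bound on $\p_tQ_jS_{j-1}f$ is hidden. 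It also explains the hypotheses $\gamma<1$ and $g\in C_TW^{\alpha+2-\gamma}$.
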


\begin{lemma}[\cite{zhang2026wellposednessregularityfractionalrough}]\label{le;heat flow;time smoothing}
   Let $0\leq T\leq 1$, $\alpha<0$, $\delta\in[0,2)$ satisfy $0<\alpha+\delta<2$. Then we have
	\begin{align*}
            \Vert f\Vert_{\widetilde{W}_\infty^{\alpha+\delta}(T)}\lesssim  \Vert f(0)\Vert_{W_\infty ^{\alpha+\delta}}+T^{1-\frac{\delta}{2}}\Vert \ml L f\Vert_{C_TW_\infty^{\alpha}}
        \end{align*}
        and 
        \begin{align*}
            \Vert f\Vert_{\widetilde{W}_2^{\alpha+\delta}(T)}\lesssim \Vert f(0)\Vert_{W_2^{\alpha+\delta}}+T^{1-\frac{\delta}{2}}\Vert \ml L f\Vert_{C_TW_2^{\alpha}}.
	\end{align*}
\end{lemma}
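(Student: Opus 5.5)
The plan is to reduce both estimates to a Schauder-type estimate for the heat semigroup, together with a time-regularity interpolation argument. I write $f(t)=e^{t\Delta}f(0)+\int_0^t e^{(t-s)\Delta}\ml Lf(s)\,\mathrm ds=:f_1(t)+f_2(t)$ and treat the two pieces separately. I only describe the $p=\infty$ case. The $p=2$ case is identical, because every step uses only the Bernstein-type block estimates $\Vert\Delta_j e^{t\Delta}g\Vert_{L^p}\lesssim e^{-ct2^{2j}}\Vert\Delta_jg\Vert_{L^p}$ ($j\geq0$). These hold uniformly in $p\in[1,\infty]$, so the $\ell^\infty$ (respectively $\ell^2$) summation in $j$ proceeds in the same way. (On $\m T$ the block $j=-1$ is harmless since $T\le1$.)

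\emph{Initial data term.} The spatial bound $\sup_t\Vert f_1(t)\Vert_{B^{\alpha+\delta}_{p,p}}\lesssim\Vert f(0)\Vert_{B^{\alpha+\delta}_{p,p}}$ follows from the uniform boundedness of the heat semigroup on Besov spaces. For the time H\"older seminorm with exponent $\beta=\frac{\alpha+\delta}{2}\in(0,1)$, I write $f_1(t)-f_1(r)=(e^{(t-r)\Delta}-1)e^{r\Delta}f(0)$ and use the block estimate $\Vert\Delta_j(e^{h\Delta}-1)g\Vert_{L^p}\lesssim\min(1,h2^{2j})\Vert\Delta_jg\Vert_{L^p}$. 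Summing over $j$ gives $\lesssim h^{\beta}\Vert g\Vert_{B^{2\beta}_{p,\infty}}$, and since $2\beta=\alpha+\delta>0$ this is an $L^p$ bound with the correct power of $h$.

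\emph{Duhamel term.} This is the main step. For the spatial norm I estimate blockwise:
\[
2^{j(\alpha+\delta)}\Vert\Delta_jf_2(t)\Vert_{L^p}\lesssim 2^{j\delta}\int_0^t e^{-c(t-s)2^{2j}}\,\mathrm ds\;\Vert\ml Lf\Vert_{C_TB^\alpha_{p,\infty}}\lesssim 2^{j\delta}\min(t,2^{-2j})\Vert\ml Lf\Vert_{C_T B^{\alpha}_{p,\infty}}.
\]
Since $\delta<2$, we have $2^{j\delta}\min(t,2^{-2j})\le t^{1-\delta/2}$, which yields the factor $T^{1-\delta/2}$. Note that I use $\sup_j$ here; for the $B^{\alpha+\delta}_{p,p}$ norm with $p=2$ one sacrifices an $\varepsilon$ or argues via $\ell^2$ directly. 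This $\ell^p$ summability is the point I expect to be delicate. I would handle it by interpreting $W_p^\alpha$ consistently as the norm used on $\ml Lf$, and if necessary by loosening the spatial index slightly.

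\emph{Time regularity of the Duhamel term.} For $r<t$ with $h=t-r$, I split
\[
f_2(t)-f_2(r)=(e^{h\Delta}-1)f_2(r)+\int_r^te^{(t-s)\Delta}\ml Lf(s)\,\mathrm ds.
\]
For the first piece, the multiplier bound above combined with the spatial estimate at regularity level $\delta'=\delta$ gives $\lesssim h^{\beta}T^{1-\delta/2}$. For the second piece, the same block computation on an interval of length $h$ gives $\Vert\cdot\Vert_{L^p}\lesssim\sum_j2^{-j\alpha}\min(h,2^{-2j})\lesssim h^{1+\alpha/2}$, using $-2<\alpha<0$. Since $h^{1+\alpha/2}=h^{\beta}h^{1-\delta/2}\le h^\beta T^{1-\delta/2}$, this piece is also controlled. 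Combining the two pieces with the initial data estimate proves both inequalities.
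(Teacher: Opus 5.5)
The paper does not prove this lemma; it imports it from \cite{zhang2026wellposednessregularityfractionalrough}. Your argument therefore has to stand on its own, and most of it does. The following steps are all correct:
\begin{itemize}
\item the Duhamel splitting and the heat-block bounds;
\item the bound $2^{j\delta}\min(t,2^{-2j})\le t^{1-\delta/2}$;
\item the time-H\"older split, using $h^{1+\alpha/2}=h^{(\alpha+\delta)/2}h^{1-\delta/2}$ and $\alpha>-\delta>-2$.
\end{itemize}
These give the full $p=\infty$ estimate. They also give the $C^{(\alpha+\delta)/2}_TL^2$ part of the $p=2$ estimate, since those steps only use $B^{\alpha+\delta}_{2,\infty}$ and $B^{\alpha}_{2,\infty}$ control.

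What is missing is the $C_TB^{\alpha+\delta}_{2,2}=C_TH^{\alpha+\delta}$ bound for the Duhamel term, which you explicitly leave open. The fallback you propose does not prove the lemma as stated. Losing an $\varepsilon$ in the spatial index, or running the argument with $\delta+\varepsilon$, only yields the space $\widetilde W_2^{\alpha+\delta-\varepsilon}(T)$ or the weaker factor $T^{1-(\delta+\varepsilon)/2}$. So your opening claim that the $p=2$ case is ``identical'' is not yet justified.

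The step is in fact not delicate. The hypotheses $\alpha<0<\alpha+\delta$ force $\delta>0$. Together with $\delta<2$, this makes the weights $w_j(t)=2^{j\delta}\min(t,2^{-2j})$ grow geometrically (ratio $2^{\delta}$) up to $2^{2j}\approx t^{-1}$ and decay geometrically (ratio $2^{\delta-2}$) beyond it. Hence $\sum_{j\geq-1}w_j(t)\lesssim t^{1-\delta/2}$ for $t\le 1$, with a constant depending on $\delta$. Your own blockwise bound then already gives, for every $q\in[1,\infty]$,
\[
\Vert f_2(t)\Vert_{B^{\alpha+\delta}_{p,q}}\lesssim \Big(\sum_{j\geq-1}w_j(t)\Big)\sup_{s\le t}\Vert\mathcal Lf(s)\Vert_{B^\alpha_{p,\infty}}\lesssim t^{1-\frac{\delta}{2}}\Vert \mathcal Lf\Vert_{C_TB^{\alpha}_{p,\infty}} .
\]
Taking $p=q=2$ and using $H^\alpha\hookrightarrow B^\alpha_{2,\infty}$ gives exactly the required bound.

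Alternatively, do not take $\sup_s$ inside the $j$-sum. Use the smoothing estimate $\Vert e^{\tau\Delta}g\Vert_{B^{\alpha+\delta}_{p,q}}\lesssim\tau^{-\delta/2}\Vert g\Vert_{B^{\alpha}_{p,q}}$ for $0<\tau\le1$, and then Minkowski's inequality in $s$. The singularity $(t-s)^{-\delta/2}$ is integrable precisely because $\delta<2$.

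With either observation inserted, and the $\varepsilon$-loss fallback removed, your proof is complete.
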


Next, we present lemmas concerning bilinear estimates, which will be used later to establish local well-posedness. We recall the bilinear operator $\widetilde B(f,g)$ in and define $B(f,g)$ as
\begin{flalign}
&B(f,g)(t)\triangleq\int^t_0 e^{ (t-s)\Delta}f(s)\p_xg(s)\,\mathrm{d}s, \\
\end{flalign}

\begin{lemma}\label{lemma;B1}
     Let $s=\sigma+\rho.$ For $0\leq \rho<2$, we have 
    \begin{equation}\label{B}
        \|B(f,g)\|_{L^\infty_T(H^s)}\lesssim T^{1-\frac{\rho}{2}}(1+T^{\frac{\rho}{2}})\|fg_x\|_{L^\infty_T(H^\sigma)}. 
    \end{equation}
    For $0\leq \rho<1$, we have
    \begin{equation}\label{wB}
        \|\widetilde{B}(f,g)\|_{L^\infty_T(H^s)}\lesssim T^{\frac{1-\rho}{2}}\|fg\|_{L^\infty_T(H^{\sigma-2})}. 
    \end{equation}
\end{lemma}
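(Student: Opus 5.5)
Both estimates follow from Fourier multiplier bounds for the heat semigroup on $\mathbb{T}$, applied inside the Duhamel integral and combined with Minkowski's inequality in time.

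\textbf{Estimate \eqref{B}.} Write $F=fg_x$ and apply Minkowski's inequality:
\begin{equation*}
\|B(f,g)(t)\|_{H^s}\le\int_0^t\|e^{(t-s)\Delta}F(s)\|_{H^{\sigma+\rho}}\,\mathrm ds .
\end{equation*}
The key smoothing bound is, for $\rho\ge0$ and $\tau>0$,
\begin{equation*}
\|e^{\tau\Delta}h\|_{H^{\sigma+\rho}}\lesssim(1+\tau^{-\rho/2})\|h\|_{H^\sigma}.
\end{equation*}
To prove it, work on the Fourier side with the weight $\langle k\rangle^{\rho}e^{-\tau k^2}$. Split $\langle k\rangle^\rho\lesssim1+|k|^\rho$ and use $\sup_k|k|^\rho e^{-\tau k^2}\lesssim\tau^{-\rho/2}$; the low-frequency part is controlled by $\langle k\rangle^\rho e^{-\tau k^2}\lesssim1$ when $|k|\le1$. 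Inserting the bound and integrating in time gives
\begin{equation*}
\int_0^t\bigl(1+(t-s)^{-\rho/2}\bigr)\,\mathrm ds\lesssim T+T^{1-\rho/2}=T^{1-\rho/2}\bigl(1+T^{\rho/2}\bigr),
\end{equation*}
where the integral converges because $\rho<2$. Taking the supremum over $t\le T$ yields \eqref{B}.

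\textbf{Estimate \eqref{wB}.} Here $\widetilde B(f,g)(t)=\int_0^te^{(t-s)\Delta}\p_x(1-\p_x^2)^{-1}(fg)(s)\,\mathrm ds$. The operator $\p_x(1-\p_x^2)^{-1}$ has symbol $ik/(1+k^2)$, so it maps $H^{\sigma-2}$ into $H^{\sigma-1}$ boundedly. It therefore suffices to gain $1+\rho$ derivatives from the semigroup:
\begin{equation*}
\|e^{\tau\Delta}h\|_{H^{\sigma+\rho}}\lesssim(1+\tau^{-(1+\rho)/2})\|h\|_{H^{\sigma-1}}.
\end{equation*}
Integrating in time, the singular term is $\int_0^t(t-s)^{-(1+\rho)/2}\,\mathrm ds\lesssim T^{(1-\rho)/2}$, which is finite precisely because $\rho<1$. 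The bounded part contributes $T$. Since $T\le1$ in the applications, $T\lesssim T^{(1-\rho)/2}$, which gives \eqref{wB} as stated.

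\textbf{Main obstacle.} The only delicate point is the zero mode $k=0$ in \eqref{wB}, where the heat semigroup gives no decay. It is harmless: the multiplier $ik/(1+k^2)$ vanishes at $k=0$, so the symbol of the full operator is in fact bounded by $|k|^{-1}e^{-\tau k^2}\langle k\rangle^{1+\rho}\lesssim\tau^{-(1+\rho)/2}$ uniformly in $k$. This removes the additive constant altogether, so \eqref{wB} actually holds for all $T>0$ without the restriction $T\le1$.
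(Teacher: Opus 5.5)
Your proof is correct and follows essentially the paper's argument: a pointwise Fourier-multiplier bound on the heat kernel, then Minkowski's inequality in time. For $\widetilde B$, the paper bounds $(1+|k|^2)^{\rho/2}|k|e^{-(t-\tau)|k|^2}\lesssim (t-\tau)^{-(\rho+1)/2}$ directly for all $k$. That is exactly your zero-mode refinement, and it makes the detour through $H^{\sigma-1}$ and the $T\le 1$ restriction unnecessary.

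One slip: the symbol from $H^{\sigma-2}$ to $H^{\sigma+\rho}$ is $|k|\langle k\rangle^{\rho}e^{-\tau k^2}$, not $|k|^{-1}\langle k\rangle^{1+\rho}e^{-\tau k^2}$, which is infinite at $k=0$. Your conclusion $\lesssim\tau^{-(1+\rho)/2}$ does hold for the correct symbol.
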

\begin{proof}
    For all $t\in[0,T]$, we have 
    \begin{align}
        \|B(f,g)(t)\|_{H^s}&=\Big(\sum\limits_{k\in\mathbb{Z}}(1+|k|^2)^s|\widehat{B(f,g)}(k)|^2\Big)^{\frac{1}{2}}\notag\\
        &=\Big(\sum\limits_{k\in\mathbb{Z}}|\int_0^t(1+|k|^2)^{\frac{s}{2}}e^{- (t-\tau)|k|^2}|\widehat{fg_x}(k)| \,\mathrm{d}\tau|^2\Big)^{\frac{1}{2}}\notag\\ &=\Big(\sum\limits_{k\in\mathbb{Z}}|\int_0^t(1+|k|^2)^{\frac{\sigma+\rho}{2}}e^{- (t-\tau)|k|^2}|\widehat{fg_x}(k)|\,\mathrm{d}\tau|^2\Big)^{\frac{1}{2}}\notag.
    \end{align}
    Noting that for $k\geq 1$,
    \begin{align*}
        (1+|k|^2)^{\frac{\rho}{2}}e^{- (t-\tau)|k|^2}\lesssim e^{- (t-\tau)|k|^2}|k|^\rho\lesssim \frac{1}{ (t-\tau)^{\frac{\rho}{2}}},
    \end{align*}
    we deduce that
    \begin{align}\label{eq;B1}
        \|B(f,g)(t)\|_{H^s}&\lesssim \Big(\sum\limits_{k\in\mathbb{Z}}|\int_0^t\max(1,\frac{1}{( t-\tau)^{\frac{\rho}{2}}})(1+|k|^2)^{\frac{\sigma}{2}}|\widehat{fg_x}(k)| \,\mathrm{d}\tau|^2\Big)^{\frac{1}{2}}\notag\\
        &\lesssim \int_0^t \max(1,\frac{1}{( t-\tau)^{\frac{\rho}{2}}})\|fg_x\|_{H^\sigma(\m T)} \,\mathrm{d}\tau\notag\\
        &\lesssim T^{1-\frac{\rho}{2}}(1+T^{\frac{\rho}{2}})\|fg_x\|_{L^\infty_T(H^\sigma)},
    \end{align}
    with $0\leq \rho<2$. Similarly, we have
    \begin{align}\label{eq;B2}
        \|\widetilde{B}(f,g)(t)\|_{H^s}&=\Big(\sum\limits_{k\in\mathbb{Z}}(1+|k|^2)^s|\widehat{\widetilde{B}(f,g)}(k)|^2\Big)^{\frac{1}{2}}\notag\\
        &=\Big(\sum\limits_{k\in\mathbb{Z}}|\int_0^t(1+|k|^2)^{\frac{s}{2}-1}e^{- (t-\tau)|k|^2}|k||\widehat{fg}(k)| \,\mathrm{d}\tau|^2\Big)^{\frac{1}{2}}\notag\\ &\lesssim\Big(\sum\limits_{k\in\mathbb{Z}}|\int_0^t(1+|k|^2)^{\frac{\sigma}{2}-1}\frac{1}{(t-\tau)^{\frac{\rho+1}{2}}}|\widehat{fg}(k)| \,\mathrm{d}\tau|^2\Big)^{\frac{1}{2}}\notag\\
        &\lesssim \int_0^t \frac{1}{(t-\tau)^{\frac{\rho+1}{2}}}\|fg\|_{H^{\sigma-2}(\m T)} \,\mathrm{d}\tau\notag\\
        &\lesssim T^{\frac{1-\rho}{2}}\|fg\|_{L^\infty_T(H^{\sigma-2})},
    \end{align}
    with $0\leq \rho<1$.\par
    Combining inequalities \eqref{eq;B1} and \eqref{eq;B2} completes the proof.
\end{proof}

Using Lemma \ref{lemma;B1} and the product laws in Besov spaces, we can prove that
\begin{lemma}\label{estimate_B}
    Let $s\in[\max(1,3-\kappa),\kappa)$, where $\kappa$ satisfies
    $$
    \kappa\in\left\{
    \begin{array}{ll}
    (2,2\eta+\frac{3}{2}), & \text{if}\quad \eta>\frac{1}{4}, \\
    (\frac{3}{2},2\eta+\frac{3}{2}), & \text{if}\quad 0<\eta\leq\frac{1}{4}.
    \end{array}\right.
    $$    
    Denote 
    $$
    \rho_1\triangleq\max(s-(\kappa-2),0),\quad \rho_2\triangleq\max(s-(\kappa-1),0).
    $$
    Then for any $T>0$, we have the following estimates:\\
    (1) For $f\in L^{\infty}_T(H^s)$, we have
    \begin{align*}
        &\|B(f,f)\|_{L^{\infty}_T(H^s)}\leq C_1T^{\frac{1}{2}}(1+T^{\frac{1}{2}})\|f\|_{L^{\infty}_T(H^s)}^2,\\
        &\|\widetilde{B}(f,f)\|_{L^{\infty}_T(H^s)}+\|\widetilde{B}(f_x,f_x)\|_{L^{\infty}_T(H^s)}\leq C_1T^{\frac{1}{4}}\|f\|_{L^{\infty}_T(H^s)}^2.
    \end{align*}
    (2) For $f\in L^{\infty}_T(W^{\kappa-1}),\ g\in L^{\infty}_T(W^{2\kappa-1})$, we have
    \begin{align*}
        &\|B(f,f)\|_{L^{\infty}_T(H^s)}\leq C_2T^{1-\frac{\rho_1}{2}}(1+T^{\frac{\rho_1}{2}})\|f\|_{L^{\infty}_T(W^{\kappa-1})}^2,\\
        &\|\widetilde{B}(f,f)\|_{L^{\infty}_T(H^s)}\leq C_2 T^{\frac{1}{2}}\|f\|_{L^{\infty}_T(W^{\kappa-1})}^2,\\
        &\|\widetilde{B}(g_x,g_x)\|_{L^{\infty}_T(H^s)}\leq C_2 T^{\frac{1}{2}}\|g\|_{L^{\infty}_T(W^{2\kappa-1})}^2,\\
        &\|\widetilde{B}(f_x,g_x)\|_{L^{\infty}_T(H^s)}\leq C_2 T^{\frac{1}{2}}\|f\|_{L^{\infty}_T(W^{\kappa-1})}\|g\|_{L^{\infty}_T(W^{2\kappa-1})}.
    \end{align*}
    Furthermore, in the case $\kappa\in(2,2\eta+\frac{3}{2})$ and $\eta>\frac{1}{4}$, it holds that
    \begin{align*}
        \|\widetilde{B}(f,f)\|_{L^{\infty}_T(H^s)}+\|\widetilde{B}(f_x,f_x)\|_{L^{\infty}_T(H^s)}\leq C_2T^{\frac{1-\rho_2}{2}}\|f\|_{L^{\infty}_T(W^{\kappa-1})}^2.
    \end{align*}
    (3) For $f\in L^{\infty}_T(W^{\kappa-1}),\ g\in L^{\infty}_T(H^s)$, we have
    \begin{align*}
        &\|B(f,g)\|_{L^{\infty}_T(H^s)}\leq C_3T^{\frac{1}{2}}(1+T^{\frac{1}{2}})\|f\|_{L^{\infty}_T(W^{\kappa-1})}\|g\|_{L^{\infty}_T(H^s)},\\
        &\|B(g,f)\|_{L^{\infty}_T(H^s)}\leq C_3T^{1-\frac{\rho_1}{2}}(1+T^{\frac{\rho_1}{2}})\|f\|_{L^{\infty}_T(W^{\kappa-1})}\|g\|_{L^{\infty}_T(H^s)},\\
        &\|\widetilde{B}(f,g)\|_{L^{\infty}_T(H^s)}+\|\widetilde{B}(f_x,g_x)\|_{L^{\infty}_T(H^s)}\leq C_3 T^{\frac{1}{4}}\|f\|_{L^{\infty}_T(W^{\kappa-1})}\|g\|_{L^{\infty}_T(H^s)}.
    \end{align*}
\end{lemma}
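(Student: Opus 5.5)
The plan is to reduce every bound to Lemma \ref{lemma;B1}, choosing the splitting $s=\sigma+\rho$ in each case, and then to bound the product in the resulting norm ($H^\sigma$ for $B$, $H^{\sigma-2}$ for $\widetilde B$) by standard product laws in Besov spaces. The product laws we need are the algebra property of $H^r$ for $r>\frac12$; the estimate $\|fg\|_{H^r}\lesssim\|f\|_{\mathcal C^{a}}\|g\|_{H^r}$ for $a>|r|$ (from Bony's decomposition, Definition \ref{Bony decomposition}); and, for negative indices, $\|fg\|_{H^{a+b-1/2}}\lesssim\|f\|_{H^a}\|g\|_{H^b}$ when $a+b>0$ and $a,b<\frac12$. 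On $W^\alpha=\mathcal C^\alpha\cap H^\alpha$ we may use either norm. The powers of $T$ are then read off from Lemma \ref{lemma;B1}: the exponent is $1-\rho/2$ for $B$ and $(1-\rho)/2$ for $\widetilde B$. In particular $T^{1/4}$ corresponds to $\rho=\frac12$, and $T^{1/2}$ corresponds to $\rho=1$ for $B$ and $\rho=0$ for $\widetilde B$.

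\emph{Part (1).} Take $\sigma=s-1$, $\rho=1$ for $B$. Since $s\ge1$, $f\in H^s$ and $f_x\in H^{s-1}$, we have $\|ff_x\|_{H^{s-1}}\lesssim\|f\|_{H^s}^2$. This holds because $H^s$ is an algebra if $s>\frac12$, and $\mathcal C$-type control of $f$ comes from $H^s\subset L^\infty$ when $s-1\ge0$, with a duality-type product estimate otherwise. For $\widetilde B$ take $\rho=\frac12$, so $\sigma-2=s-\frac52$. Then $\|f_x^2\|_{H^{s-5/2}}\lesssim\|f_x\|_{H^{s-1}}^2$ by the negative-index product law with $a=b=s-1$, which gives index $2s-\frac52\ge s-\frac52$. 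The term $\widetilde B(f,f)$ is easier.

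\emph{Part (2).} Here $f\in W^{\kappa-1}$ with $\kappa-1>\frac12$, so $f_x\in W^{\kappa-2}$ and $ff_x\in H^{\kappa-2}$ via the paraproduct law, using $\kappa-1>|\kappa-2|$ since $\kappa>\frac32$. Choose $\sigma=\min(s,\kappa-2)$ and $\rho=\rho_1$, noting $\rho_1<2$ because $s<\kappa$. For $\widetilde B(f,f)$ with $\rho=0$ we need $ff\in H^{s-2}$, which is immediate because $s-2<\kappa-1$. For $g\in W^{2\kappa-1}$ we have $g_x\in W^{2\kappa-2}$ with $2\kappa-2>1$, so $g_x^2$ and $f_xg_x$ lie in $H^{\kappa-2}$, and $\kappa-2\ge s-2$. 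In the case $\kappa>2$, $f_x^2\in H^{\kappa-2}$ because $f_x\in\mathcal C^{\kappa-2}$ with $\kappa-2>0$. We then take $\sigma-2=\kappa-2$, i.e.\ $\sigma=\kappa$, which exceeds $s$, so $\rho=0$ suffices. Alternatively one can use $\sigma=\kappa-1$, $\rho=\rho_2<1$, matching the stated power $T^{(1-\rho_2)/2}$.

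\emph{Part (3).} For $B(f,g)$ take $\rho=1$: we need $fg_x\in H^{s-1}$, which follows from $f\in\mathcal C^{\kappa-1}$ with $\kappa-1>|s-1|$ (as $s<\kappa$ and $s\ge 3-\kappa$). For $B(g,f)$ take $\sigma=\min(s,\kappa-2)$, $\rho=\rho_1$: $gf_x\in H^{\kappa-2}$ using $g\in H^s\subset L^\infty$ when $s>\frac12$ and $f_x\in W^{\kappa-2}$. The $\widetilde B$ terms use $\rho=\frac12$ with the target $H^{s-5/2}$. The delicate one is $f_xg_x$, with $f_x\in\mathcal C^{\kappa-2}$ and $g_x\in H^{s-1}$. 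The resonant part requires $\kappa-2+s-1>0$, which is exactly $s>3-\kappa$; at the endpoint $s=3-\kappa$ the extra room in the target index $s-\frac52$ should absorb it, by working in $H$ with the $H^a\times H^b$ law.

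\textbf{Main obstacle.} The step I expect to be hardest is the negative-regularity resonant products $f_x\circ f_x$ and $f_x\circ g_x$ in the regime $\kappa\le2$. There one must check carefully that the constraint $s\ge3-\kappa$ exactly guarantees a positive sum of regularities, possibly up to the half-derivative loss in $H\times H$ products. If that fails, I would spend the spare derivative $\frac12$ (taking $\rho$ slightly below $\frac12$) to close the estimate.
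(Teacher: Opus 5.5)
Your proposal is correct and follows the same route as the paper, which justifies this lemma only by the remark that it follows from Lemma \ref{lemma;B1} together with product laws in Besov spaces. Your choices of $\rho$ ($\rho=1$ or $\rho_1$ for $B$; $\rho=\frac12$, $0$ or $\rho_2$ for $\widetilde B$) are exactly the ones the stated powers of $T$ dictate. The only points to tighten are these: the products $ff_x$ in (1) and $gf_x$ in (3) should be bounded via the multiplier property $H^{s}\cdot H^{r}\subset H^{r}$ for $s>\frac12$, $|r|\le s$, rather than via $H^s\subset L^\infty$ alone; and at the endpoints $s=1$ in (1) and $s=3-\kappa$ in (3) the resonant product only lies in $L^1\subset H^{-1/2-\epsilon}$, which the target $H^{s-5/2}$ easily accommodates.
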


  \section{Enhanced stochastic process}\label{sec;enhanced data}
 \par The regularity of the stochastic convolution plays a decisive role in the
analysis of the nonlinear term. Depending on the value of $\eta$, different
singular structures emerge and require different analytical treatments. We
therefore divide the argument into four regimes $\eta\in(1/4,\infty)$, $\eta\in(1/8,1/4]$, $\eta\in(0,1/8]$, $\eta\in(-1/12,0]$. We begin by recalling the stochastic objects that encode the rough behavior
of the solution. For space-time white noise $\xi$ and $X(t):=\int_0^t e^{ (t-s)\Delta} (1-\p^2_x)^{-\eta}\xi(s)\,\mathrm{d}s$, we write 
\begin{equation}\label{eq;X(t) decomposition}
    X(t)=\int_{0}^t e^{ (t-s)\Delta} (1-\p^2_x)^{-\eta}\xi(s)\,\mathrm{d}s.
\end{equation}
 For $X(t)$, we have the following lemma.
\begin{lemma}[\cite{zhang2026wellposednessregularityfractionalrough}]\label{X_bdd}
    The spatial Fourier transform $X_k=\mathcal{F}_x X(k)$ is a Gaussian process with zero-mean satisfying
    \begin{equation*}
        \mathbb{E}[X_k(t)X_{k'}(t')]=(e^{-\vert t-t'\vert\vert k\vert^\gamma}-e^{-( t+t')\vert k\vert^\gamma})\frac{1}{2\vert k\vert^\gamma|\m T|}\textbf{1}_{k+k'=0}(1+|k|^2)^{-2\eta},
    \end{equation*}
    and $\m E[X_0(t)X_{k}(t')]= \frac{1}{|\m T|}(t\wedge t')\textbf{1}_{k=0}$.
    where $|\mathbb{T}|=\int_\mathbb{T}1\,\mathrm{d}x$ and $\mathbf{1}_{k+k^{\prime}=0}$ is the indicator function. Moreover, for any $T>0$ and $\kappa<2\eta-2\delta+\frac32$, we have $X\in C_T^\delta W^{\kappa-1}$ for any $0<\delta<\frac12.$
\end{lemma}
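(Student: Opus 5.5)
The plan is to compute everything mode by mode in Fourier space, where the heat semigroup and $(1-\p_x^2)^{-\eta}$ are both diagonal, and then to obtain the path regularity from Gaussian moment bounds via a Kolmogorov-type argument.

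\textbf{Covariance.} Expanding $\xi$ in the Fourier basis on $\m T$, the modes $\xi_k=\mathcal F_x\xi(k)$ are complex white noises in time with $\m E[\xi_k(s)\xi_{k'}(s')]=|\m T|^{-1}\delta(s-s')\mathbf 1_{k+k'=0}$. Hence
\[
X_k(t)=\int_0^t e^{-(t-s)|k|^2}(1+|k|^2)^{-\eta}\xi_k(s)\,\mathrm ds .
\]
This is a Wiener integral of a deterministic kernel, so it is centred Gaussian. By the It\^o isometry, for $t\le t'$,
\[
\m E[X_k(t)X_{k'}(t')]=\mathbf 1_{k+k'=0}(1+|k|^2)^{-2\eta}|\m T|^{-1}\int_0^{t} e^{-(t-s)|k|^2-(t'-s)|k|^2}\,\mathrm ds .
\]
Evaluating the integral gives $\frac{1}{2|k|^2}\big(e^{-|t-t'||k|^2}-e^{-(t+t')|k|^2}\big)$, which is the stated formula with $\gamma=2$. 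For $k=0$ there is no damping, so $X_0$ is a Brownian motion and its covariance is $(t\wedge t')/|\m T|$.

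\textbf{Moment bounds.} For $j\ge0$ and fixed $x$, I would bound
\[
\m E|\Delta_j X(t,x)|^2\lesssim\sum_{|k|\sim2^j}\frac{(1+|k|^2)^{-2\eta}}{|k|^2}\lesssim 2^{-j(1+4\eta)}.
\]
For the time increment, the elementary bound $\m E|X_k(t)-X_k(t')|^2\lesssim (1+|k|^2)^{-2\eta}|k|^{-2}\min(1,|t-t'||k|^2)$ holds. Interpolating with the exponent $\min(1,a)\le a^{\delta'}$ gives
\[
\m E|\Delta_j(X(t)-X(t'))(x)|^2\lesssim |t-t'|^{\delta'}\,2^{-j(1+4\eta-2\delta')} .
\]

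\textbf{Path regularity.} Gaussian hypercontractivity upgrades these second-moment bounds to all $p$-th moments. Integrating over $x$ yields bounds in $B^{\alpha}_{p,p}$, and the Besov embedding $B^{\alpha}_{p,p}\hookrightarrow B^{\alpha-1/p}_{\infty,\infty}$ with $p$ large loses only an $\epsilon$ of regularity. This gives, for $\alpha<\frac12+2\eta-\delta'$,
\[
\m E\|X(t)-X(t')\|_{\ml C^{\alpha}}^p\lesssim|t-t'|^{p\delta'/2}.
\]
The same holds in $H^\alpha$ directly from the $L^2$ computation. Kolmogorov's continuity theorem then gives $X\in C^\delta_TW^{\kappa-1}$ for $\delta<\delta'/2$, hence for $\kappa<2\eta-2\delta+\frac32$ and any $\delta<\frac12$.

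The only delicate point is the exponent bookkeeping in the last step, namely trading time H\"older regularity against spatial regularity and absorbing the $\epsilon$-losses from $p<\infty$ and from Kolmogorov. Since every inequality in the hypothesis is strict, there is room to absorb these losses. The zero mode is handled separately, since a Brownian path is $C^\delta$ for every $\delta<\frac12$.
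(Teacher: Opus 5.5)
Your proposal is correct and takes essentially the same approach as the paper. The paper only cites this lemma, but its own stochastic estimates (Lemmas \ref{le;resonant} and \ref{le;high order}) use the same template as yours: explicit Fourier covariances via the It\^o isometry (with $\gamma=2$), Littlewood--Paley second-moment bounds with $\min(1,a)\le a^{\delta'}$ trading spatial for temporal regularity, Gaussian hypercontractivity, the Besov embedding, and Kolmogorov's criterion. The strict inequality $\kappa<2\eta-2\delta+\frac32$ with $\delta<\frac12$ does leave room to choose $\delta'\in\big(2\delta,\min(1,\tfrac32+2\eta-\kappa)\big)$ and absorb the $1/p$ losses, as you say.
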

When $\eta>1/4$, the stochastic convolution is sufficiently regular, namely
\[
X\in C_TW^{1+}.
\]
Therefore, the nonlinear interaction does not require any further cancellation, since $(\partial_xX)^2$ is well-defined in the classical sense.
We next turn to the intermediate regime \(\eta\in(1/8,1/4]\), where the regularity of \(X\) is no longer sufficient to define the nonlinear term by classical multiplication. More precisely, after applying Bony's decomposition, the only obstructing component is the resonant interaction
\[
\partial_xX\circ\partial_xX .
\]
The convergence of this term relies on a cancellation mechanism inherent to
the Gaussian structure of the stochastic convolution.

\begin{lemma}\label{le;resonant}
    	Let $\eta\in (1/8,1/4]$, and $X_\sigma$ satisfy
    	\begin{equation}\label{eq;X_sigma}
    		\p_tX_\sigma- \Delta X_\sigma=(1-\p_x^2)^{-\eta}\xi_\sigma,\quad X_\sigma(0)=0,
    	\end{equation}
    	then for $\eta\in(\frac{1}{8},\frac{1}{4}]$, $\kappa<2\eta+\frac{3}{2}$, we have 
    	\begin{align}\label{eq;convergence;f_xf_x}
    		\p_x(\p_xX_\sigma\p_xX_\sigma)\to& \p_x(\p_xX\p_x X)
    	\end{align}
    	in $C_TW^{2\kappa-5}$ almost surely as $\sigma\to0$. Moreover, we have the estimate
        \begin{align*}
            \Vert P(D)(\p_x X\cdot\p_x X)\Vert_{C_TW^{2\kappa-3}}\lesssim  \Vert X\Vert^2_{C_TW^{\kappa-1}}+\Vert \p_x(\p_x X\circ\p_x X)\Vert_{C_TW^{2\kappa-5}}
        \end{align*}
    \end{lemma}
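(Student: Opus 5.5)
We split the product via Bony's decomposition,
\[
\p_xX_\sigma\,\p_xX_\sigma=2\,\p_xX_\sigma\prec\p_xX_\sigma+\p_xX_\sigma\circ\p_xX_\sigma .
\]
The paraproduct part is deterministic and continuous. By Lemma \ref{X_bdd}, $X_\sigma\to X$ in $C_TW^{\kappa-1}$ almost surely; this is standard for the mollified Gaussian field, since the covariance is dominated uniformly in $\sigma$ and Kolmogorov plus Borel--Cantelli along a sequence gives a.s. convergence. Hence $\p_xX_\sigma\in C_TW^{\kappa-2}$ with $\kappa-2<0$. Paraproduct estimates for a negative-regularity low-frequency factor give $\p_xX_\sigma\prec\p_xX_\sigma\to\p_xX\prec\p_xX$ in $C_TW^{2\kappa-4}$. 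After one derivative this lands in $W^{2\kappa-5}$. Everything therefore reduces to the resonant term, and since $2\kappa-4<0$ when $\kappa<2$, no classical product estimate applies to it.

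For the resonant term we would do a second-chaos computation in Fourier variables. Write $\p_xX_\sigma\circ\p_xX_\sigma=\sum_{|i-j|\le1}\Delta_i\p_xX_\sigma\Delta_j\p_xX_\sigma$ and split it into its Wick-ordered part plus its mean. On $\m T$, by Lemma \ref{X_bdd} (with $\gamma=2$), the mean is the spatial constant
\[
c_\sigma(t)=\sum_{k}\ \sum_{|i-j|\le1}\varphi_i(k)\varphi_j(k)\,|\hat\rho_\sigma(k)|^2\,k^2\,\frac{1-e^{-2t|k|^2}}{2|k|^2|\m T|}(1+|k|^2)^{-2\eta}.
\]
For $\eta\le1/4$ this sum diverges like $\sum_k|k|^{-4\eta}$. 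This divergence is exactly why the statement is written with an outer $\p_x$: since $\p_xc_\sigma=0$, we get $\p_x(\p_xX_\sigma\circ\p_xX_\sigma)=\p_x\big(\p_xX_\sigma\diamond\p_xX_\sigma\big)_{\circ}$, the Wick part only. So we never need to renormalize.

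For the Wick part $W_\sigma$ we bound $\m E|\Delta_q W_\sigma(t)|^2$ using Wick's theorem. The result is a convolution sum over $k_1+k_2=k$ with $|k_1|\sim|k_2|\gtrsim|k|$, of size roughly $\sum_{|k_1|\gtrsim2^q}|k_1|^{-8\eta}$ per output mode. The $L^2$ and sup bounds then give regularity $W^{(2\kappa-4)-}$, which is legitimate since we only need $2\kappa-4$ with $\kappa<2\eta+3/2$; the sum converges for $\eta>1/8$, which is where the lower threshold enters. We would pass from second moments to all $p$-th moments by Gaussian hypercontractivity. A time-increment estimate, with a factor $|t-s|^{\delta}$ extracted from the covariance $e^{-|t-s||k|^2}$, together with Kolmogorov/Garsia--Rodemich--Rumsey and Besov embedding, gives $W_\sigma\in C_T\ml C^{2\kappa-4}\cap C_TH^{2\kappa-4}$. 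Applying the same bounds to $W_\sigma-W_{\sigma'}$ yields a factor $\sigma^{\theta}$ from $1-\hat\rho_\sigma(k)$. Summing over $\sigma=2^{-n}$ with Borel--Cantelli gives almost-sure convergence, and a monotonicity/interpolation argument extends this to the full family. Differentiating once gives \eqref{eq;convergence;f_xf_x} in $C_TW^{2\kappa-5}$. We expect this step to be the main obstacle: the careful counting in the convolution sum and getting the exponent threshold $\eta>1/8$ right.

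The estimate then follows from the same decomposition. $P(D)=-\p_x(1-\p_x^2)^{-1}$ gains one derivative on inhomogeneous spaces, so
\[
\Vert P(D)(\p_xX\,\p_xX)\Vert_{W^{2\kappa-3}}\lesssim\Vert(1-\p_x^2)^{-1}\p_x(\p_xX\prec\p_xX)\Vert_{W^{2\kappa-3}}+\Vert(1-\p_x^2)^{-1}\p_x(\p_xX\circ\p_xX)\Vert_{W^{2\kappa-3}}.
\]
The first term is bounded by $\Vert\p_xX\Vert_{\ml C^{\kappa-2}}\Vert\p_xX\Vert_{W^{\kappa-2}}\lesssim\Vert X\Vert_{W^{\kappa-1}}^2$. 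For the second, $(1-\p_x^2)^{-1}$ maps $W^{2\kappa-5}$ to $W^{2\kappa-3}$, giving $\Vert\p_x(\p_xX\circ\p_xX)\Vert_{W^{2\kappa-5}}$. Taking the supremum in $t$ gives the claimed bound.
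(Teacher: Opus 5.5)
Your proposal is correct and follows essentially the same route as the paper: Bony splitting, continuity of the paraproduct in $C_TW^{2\kappa-4}$, and a Wick/second-chaos bound on the resonant term. In that bound the expectation pairing (the paper's $I_1$, which vanishes for output modes $k+k'\neq0$) is a spatial constant killed by the outer $\p_x$, and summability holds exactly when $\eta>1/8$; hypercontractivity with time increments and Kolmogorov, followed by the order $-1$ smoothing of $P(D)$, then finish the argument. You are more explicit than the paper on two points — naming the divergent constant, and giving a $\sigma^\theta$ difference bound for convergence where the paper only invokes dominated convergence — but the passage from the dyadic sequence to the full family $\sigma\to0$ should be done by a Kolmogorov argument jointly in $(\sigma,t)$, since ``monotonicity'' does not apply here.
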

    \begin{proof}
    	By Bony's decomposition, we have $$\p_x(\p_xX_\sigma\p_xX_\sigma)=2\p_x(\p_xX_\sigma\prec\p_xX_\sigma)+\p_x(\p_xX_\sigma\circ\p_xX_\sigma).$$
    	The paraproduct $\prec$ is stable under this convergence. Indeed, for $\kappa<2\eta+\frac{3}{2}$, since $X_\sigma$ converges to $X$ in $C_TW^{\kappa-1}$ by Lemma \ref{X_bdd}, the  continuous of $\prec$ yields that 
        \begin{align*}
            \p_x(\p_xX_\sigma\prec\p_xX_\sigma)\to\p_x(\p_xX\prec\p_xX)
        \end{align*}
           	in $C_TW^{2\kappa-5}$. Thus, it remains to prove the convergence of 
    	$\p_x(\p_xX_\sigma\circ\p_xX_\sigma)$ in $C_TW^{2\kappa-5}$. This follows from the fact that $\p_x(\p_xX\circ\p_xX)\in C_TW^{4\eta-2-}$, which can be established using the dominated convergence theorem. Note that
    	\begin{equation}
    		\Delta_l(\p_x(\p_xX\circ\p_xX))=\sum_{\vert i-j\vert \leq 1}1_{i\gtrsim l}\Delta_l\p_x (\Delta_i(\p_xX)\Delta_j(\p_xX))
    	\end{equation}
    	and that $\ml F(\Delta_if\Delta_jg)$ is supported on a ball of radius $c2^{i+j}$. To avoid case distinctions, we only work with $l>N$, so that we can always assume $i,j>0$. The case $l<N$ can be handled using essentially the same arguments.
    	
    	Let $\widehat X_{s,t}(k)=\widehat X(t,k)-\widehat X(s,k)$. We firstly consider $\m E[\widehat{X}(t,k)\widehat{X}_{s,t}(k')\overline{\widehat{X}(t,l)}\overline{\widehat{X}_{s,t}(l')}]$, where $k+k'\neq0$, $0\leq s<t\leq T$. By Wick's theorem, we have 
    	\begin{align*}
    		\m E[\widehat{X}(t,k)\widehat{X}_{s,t}(k')\overline{\widehat{X}(t,l)}\overline{\widehat{X}_{s,t}(l')}]=&\m E[\widehat{X}(t,k)\widehat{X}_{s,t}(k')]\m E[\overline{\widehat{X}(t,l)}\overline{\widehat{X}_{s,t}(l')}]\\
    		&+\m E[\widehat{X}(t,k)\overline{\widehat{X}_{s,t}(l')}]\m E[\widehat{X}_{s,t}(k')\overline{\widehat{X}(t,l)}]\\
    		&+\m E[\widehat{X}(t,k)\overline{\widehat{X}(t,l)}]\m E[\widehat{X}_{s,t}(k')\overline{\widehat{X}_{s,t}(l')}]\\
    		=&I_1+I_2+I_3.
    	\end{align*}
    	$I_1=0$ since $k+k'\neq0$. For $\delta\in[0,1]$, we deduce from Lemma \ref{X_bdd} that
    	\begin{align*}
    		I_2=&\m E[\widehat{X}(t,k)\overline{\widehat{X}_{s,t}(l')}]\m E[\widehat{X}_{s,t}(k')\overline{\widehat{X}(t,l)}]\\
    		=&\frac{1}{4|\m T|^2} 1_{k=l'}1_{k'=l}\frac{(1-e^{-(t-s)|k|^2})(1+e^{-(s+t)|k|^2})(1-e^{-(t-s)|k'|^2})(1+e^{-(s+t)|k'|^2})}{|k|^2 |k'|^2(1+|k|^2)^{2\eta}(1+|k'|^2)^{2\eta}}\\
    		\lesssim& |t-s|^\delta |k|^{2\delta-2-4\eta}|k'|^{-2-4\eta}
    	\end{align*}
    	and 
    	\begin{align*}
    		I_3=&\m E[\widehat{X}(t,k)\overline{\widehat{X}(t,l)}]\m E[\widehat{X}_{s,t}(k')\overline{\widehat{X}_{s,t}(l')}]\\
    		=&\frac{1}{4|\m T|^2}I_{k'=l'}I_{k=l}\frac{(2-e^{-2s|k'|^2}-e^{-2t|k'|^2}-2e^{-(t-s)|k'|^2}+2e^{-(t+s)|k'|^2})(1-e^{-2t|k|^2})}{|k'|^2|k|^2(1+|k|^2)^{2\eta}(1+|k'|^2)^{2\eta}}\\
    		\lesssim&|t-s|^\delta |k|^{-2-4\eta}|k'|^{2\delta-2-4\eta}.
    	\end{align*}
    	Therefore, we obtain 
    	\begin{align*}
    		&\m E[\vert \sum_{\vert i-j\vert \leq 1}1_{i\gtrsim l}\Delta_l(\p_x (\Delta_i(\p_xX(t))\Delta_j(\p_x(X)_{s,t})))\vert^2]\\
    		\lesssim&\m E[\vert \sum_{k,k'\in \m Z}\sum_{\vert i-j\vert \leq 1}1_{i\gtrsim l}\rho_l(k+k')\rho_i(k)\rho_j(k')|k+k'|kk'\widehat{X}(t,k)\widehat{X}_{s,t}(k')e^{i(k+k')x}\vert^2]\\
    		\lesssim&\sum_{k,k'\in \m Z}\sum_{\vert i-j\vert \leq 1}\sum_{\vert i'-j'\vert \leq 1}1_{i\gtrsim l}1_{i'\gtrsim l}\rho^2_l(k+k')\rho_i(k)\rho_j(k')(\rho_{i'}(k)\rho_{j'}(k')+\rho_{i'}(k')\rho_{j'}(k))\\
    		& |k+k'|^2|k|^2|k'|^2|t-s|^\delta |k|^{-2-4\eta}|k'|^{-2-4\eta}(|k|^{2\delta}+|k'|^{2\delta})\\
    		\lesssim&\sum_{i\gtrsim l}2^{3l}|t-s|^\delta2^{i(1+2\delta-8\eta)}
    	\end{align*}
    	for all $\delta\in[0,1]$. Since $\eta>\frac{1}{8}$, we get 
    	\begin{align*}
    		\m E[\vert \sum_{\vert i-j\vert \leq 1}&1_{i\gtrsim l}\Delta_l (\p_x(\Delta_i(\p_xX(t))\Delta_j(\p_x(X)_{s,t})))\vert^2]\lesssim|t-s|^\delta2^{l(4+2\delta-8\eta)}
    	\end{align*}
    	for $\delta\in[0,1]$ small enough, and
    	\begin{align*}
    		\m E[\vert \sum_{\vert i-j\vert \leq 1}&1_{i\gtrsim l}\Delta_l (\p_x(\Delta_i(\p_x(X)_{s,t})\Delta_j(\p_xX(s)))\vert^2]\lesssim|t-s|^\delta2^{l(4+2\delta-8\eta)}
    	\end{align*}
    	also holds by the same argument. Similar to the proof of Lemma 4.6 in \cite{gubinelli2015paracontrolled}, we obtain 
    	\begin{align*}
    		\m E[\Vert \Delta_l(\p_x(\p_xX\circ\p_xX))_{s,t}\Vert_{L^2}^{2p}]\lesssim |t-s|^{\delta p}2^{pl(4+2\delta-8\eta)}.
    	\end{align*}
    	Thus, since $\kappa<2\eta+\frac{3}{2}$, we have
    	\begin{align*}
    		\m E[\Vert (\p_x(\p_xX\circ\p_xX))_{s,t}\Vert_{H^{2\kappa-5}}^{2p}]\lesssim&(\sum_{l\geq -1}2^{2l({2\kappa-5})}(\m E[\Vert \Delta_l(\p_x(\p_xX\circ\p_xX))_{s,t}\Vert_{L^2}^{2p}])^{\frac{1}{p}})^p\\
    		\lesssim&(\sum_{l\geq -1}2^{2l(2\kappa-3-4\eta+\delta)})^p|t-s|^{\delta p}\lesssim |t-s|^{\delta p}
    	\end{align*}
    	for $\delta\in[0,1]$ small enough. For the H\"older space, a similar argument gives
    	\begin{align*}
    		\m E[\Vert (\p_x(\p_xX\circ\p_xX))_{s,t}\Vert_{B^{2\kappa-5}_{2p,2p}}^{2p}]\lesssim&\sum_{l\geq -1}2^{2l({2\kappa-5})p}\Vert\m E |\Delta_l(\p_x(\p_xX\circ\p_xX))_{s,t})|^2\Vert_{L^p}^{p}\\
    		\lesssim&\sum_{l\geq -1}2^{2l(2\kappa-3-4\eta+\delta)}|t-s|^{\delta p}\lesssim |t-s|^{\delta p}.
    	\end{align*}
    	Choosing $p$ large enough, we deduce from Kolmogorov's theorem that $\p_x(\p_xX\circ \p_xX)\in C_TW^{2\kappa-5}$.
        \par Finally, since $\kappa<2\eta+\frac32\leq2$, we have $\kappa-2<0$.
Hence, by the paraproduct estimate and the definition
$W^\alpha=\mathcal C^\alpha\cap H^\alpha$,
\[
    \|\partial_x(\partial_xX\prec\partial_xX)\|_
    {C_TW^{2\kappa-5}}
    \lesssim
    \|\partial_xX\|_{C_TW^{\kappa-2}}^2
    \lesssim
    \|X\|_{C_TW^{\kappa-1}}^2.
\]

Since $ P(D)=(1-\partial_x^2)^{-1}\partial_x$ and $(1-\partial_x^2)^{-1}$ gains two spatial derivatives, it follows that
\begin{align*}
    \|P(D)((\partial_xX)^2)\|_{C_TW^{2\kappa-3}}
    &\lesssim
    \|\partial_x((\partial_xX)^2)\|_{C_TW^{2\kappa-5}}\\
    &\lesssim
    \|X\|_{C_TW^{\kappa-1}}^2
    +
    \|\partial_x(\partial_xX\circ\partial_xX)\|_
    {C_TW^{2\kappa-5}}.
\end{align*}
This proves the desired estimate.
    \end{proof}
 
     \par For the rough regime $\eta\in(0,1/8]$, the resonant product
\[
\partial_xX\circ\partial_xX
\]
cannot be constructed by a direct limiting procedure. Instead, we exploit the smoothing effect of the heat semi-group and introduce a higher-order stochastic expansion. Recall the stationary process $Y$ satisfying \eqref{eq;stationary} and let $Y_\sigma$ be the stationary solution of 
     \begin{equation}
       \p_t Y_\sigma-\Delta Y_\sigma=\Pi_0^{\perp}(1-\p_x^2)^{-\eta}\xi_\sigma,\quad Y_\sigma(0)=\int_{-\infty}^0 e^{-s\Delta}(1-\p_x^2)^{-\eta}\Pi_0^{\perp}\xi_\sigma(s)\,\mathrm ds
     \end{equation}
   We define $X_\sigma ^{\dag 2}$ as the second-order expansion of $(\p_x Y_\sigma\p_x Y_\sigma)$ under the heat flow $S(t)=e^{t\Delta}$, namely,
   \begin{equation}
       \p_t X_\sigma^{\dag 2}-\Delta X_\sigma^{\dag 2}=P(D)(\p_x Y_\sigma)^2,\quad X_{\sigma}^{\dag 2}(0)=0
   \end{equation}
   Our aim is to show that  $X_\sigma ^{\dag2}$ converge to a well-defined limiting process in $C_TW^{1+}$. For simplicity, we first introduce an auxiliary process $Y_\sigma^{\dag 2}$, defined as the the stationary solution of
   \begin{equation}\label{eq; Ydag2}
   	\partial_t Y_\sigma^{\dag 2}-\Delta Y_\sigma^{\dag 2}=\Pi_0^\perp(\p_x Y_\sigma)^2,\quad Y_\sigma^{\dag 2}(0)=\int_{-\infty}^0e^{-s\Delta}\Pi_0^\perp(\p_x Y_\sigma)^2\,\mathrm{d}s.
   \end{equation}
   the relation
   \begin{equation}
   	X_\sigma^{\dag 2}=P(D)(Y_\sigma^{\dag 2}-S(t)Y_\sigma^{\dag 2}(0))
   \end{equation}
   implies that it suffice to show the convergence of $Y_\sigma^{\dag2}$. Indeed, once the convergence of $Y_\sigma^{\dag 2}$ is established, the convergence of $X_\sigma^{\dag 2}$ follows from the continuity of $P(D)$ together with the smoothing properties of the heat semigroup.
   \par  For $Y_\sigma^{\dag 2}$ and $X_\sigma^{\dag 2}$, we have the following lemma.
    \begin{lemma}
    	Let $\eta \in(-1/8,1/8)$, then for any $T\geq0$, $\alpha<1+4\eta$ and $\beta<\min(1/2+2\eta,1/2)$, there exists $Y^{\dag 2}\in C_TW^\alpha\cap C_T^\beta W^{\alpha-2\beta}$ a.s., such that $Y^{\dag 2}_\sigma\to Y^{\dag 2}$ in $L^2(\Omega;C_TW^\alpha\cap C_T^\beta W^{\alpha-2\beta})$ as $\sigma\to0$. Moreover, for $\kappa<2\eta+\frac{3}{2}$, there exists $X^{\dag 2}\in C_TW^{2\kappa-1}\cap C_T^\beta W^{2\kappa-1-2\beta}$ a.s., such that   $X_\sigma^{\dag 2}\to X^{\dag 2}$ in $L^2(\Omega;C_TW^{2\kappa-1}\cap C_T^\beta W^{2\kappa-1-2\beta})$.
        \begin{proof}
    Denote $\m Z_*=\m Z\backslash\{0\}$, $L=(l,m)\in\m Z_*^2$, $L'=(l',m')\in\m Z_*^2$, $C_\sigma(L)=\phi(\sigma l)\phi(\sigma m)$ and $\psi(L)=l+m$. The Fourier mode of $Y_\sigma^{\dag 2}$ can be represented as 
        \begin{align*}
            Y_\sigma^{\dag 2}=\sum_{L\in \m Z_*^2,\psi(L)\neq0} C_\sigma(L) f_L(t)e^{i\psi(L)x},
        \end{align*}
        where $f_L(t)=\int_{-\infty}^t e^{-|\psi(L)|^2(t-s)}|l||m|\widehat Y(s,l)\widehat Y(s,m)\mathrm ds$. Let $F_{L,L'}(t,t')=\m E[f_{L}(t)f_{L'}(t')]$, $\widehat F_{L,L'}(t,t')=\m E[(f_L(t)-f_L(t'))(f_{L'}(t)-f_{L'}(t'))]$. Proposition A.1 in \cite{zhang2026generalizedframeworksingularfractional} yields that it suffices to establish the estimates
             \begin{align*}
                \begin{gathered}
                \sum_{\substack{L,L'\in\m Z_*^2\\\psi(L)\neq0,\psi(L')\neq0}}\sup_{t\in[0,T]}|\psi(L)|^{1+4\eta}|\psi(L')|^{1+4\eta}F_{L,L'}(t)<\infty\\
                \text{and}
                \sum_{\substack{L,L'\in\m Z_*^2\\\psi(L)\neq0,\psi(L')\neq0}}|\psi(L)|^{1+4\eta-\beta\gamma}|\psi(L')|^{1+4\eta-\beta\gamma}\sup_{s,t\in[0,T]}\frac{\widehat F_{L,L'}(t)}{|t-s|^{2\beta}}<\infty.
            \end{gathered}
            \end{align*}
       Recalling the proof of the proposition in \cite{hairer2013solving}, we have
            \begin{align*}
            \begin{gathered}
                F_{L,L'}(t,t)=\Delta_{L,L'}\frac{1}{2(l+m)^2(((l+m)^2-lm)|l|^{4\eta}|m|^{4\eta}},\\
                \widehat F_{L,L'}(t,t')=\Delta_{L,L'}\frac{(l^2+m^2)^\beta |l+m|^{2\beta}|t-t'|^{2\beta}}{(l+m)^2((l+m)^2-lm)|l|^{4\eta}|m|^{4\eta}}
            \end{gathered}
            \end{align*}
where $\beta\in[0,1/2]$ and $\Delta_{L,L'}=\textbf{1}_{\psi(L)+\psi(L')=0}(\textbf{1}_{l+l'=0}+\textbf{1}_{m+l'=0})$. It's not difficult to verify that the above series are summable by Lemma 5.1 in \cite{zhang2026generalizedframeworksingularfractional}.
        \end{proof}
    \end{lemma}

    \par For the case of $\eta\in (-1/12,0]$, the singularity does not only arise from the resonant interaction of the stochastic convolution itself, but also from the interaction between the remainder term $v $ and $Y$ . To overcome this difficulty, we employ the paracontrolled approach, which provides a powerful framework for studying a wide class of singular PDEs. We first recall the definition of a paracontrolled distribution. 
\begin{definition}
    Let $\kappa>1$. A distribution $u\in H^{\kappa}$ is said to be 
    paracontrolled by $\ml Q\in H^\kappa$ if it admits a decomposition
    \[
        u=u'\prec\ml Q+u^\sharp,
    \]
    for some $u'\in\ml C^{\kappa-1}$, where the remainder $u^\sharp$ satisfies
    \[
        u^\sharp\in H^{2\kappa-1}.
    \]
\end{definition}
Let $Y\in W^{\kappa-1}$ define
\begin{equation}
    \mathcal Q(t)=\int_0^t e^{(t-s)\Delta}\partial_xY(s)\,\mathrm{d}s.
\end{equation}
The paracontrolled ansatz takes the form
$$
v=u'\prec\mathcal Q+u^\sharp,
$$
where \(u'\prec\mathcal Q\) has essentially the same regularity as \(\mathcal Q\), whereas the remainder \(u^\sharp\) enjoys higher regularity. Thus, the leading singular behavior of \(v\in H^{\kappa}\) is captured by the paraproduct \(u'\prec\mathcal Q\). Standard heat regularity estimates alone do not suffice to give a meaning to the product
$$
\partial_xv\,\partial_xY
$$
when $\eta\leq 0$ since $\kappa<2\eta+\frac{3}{2}$. Since \(u^\sharp\) is more regular than \(v\), the main difficulty is concentrated in the resonant product
$$
\partial_x(u'\prec\mathcal Q)\circ\partial_xY.
$$

Paracontrolled calculus provides a convenient way to handle this singular term. The key observation is that \(\partial_x(u'\prec\mathcal Q)\circ\partial_xY\) can be given a meaning once the resonant product
$$
\partial_x\mathcal Q\circ\partial_xY
$$
is suitably defined. Indeed, we decompose
\begin{align*}
\partial_x(u'\prec\mathcal Q)\circ\partial_xY
=&\big[\partial_x(u'\prec\mathcal Q)-u'\prec\partial_x\mathcal Q\big]\circ\partial_xY
\\&+\big[(u'\prec\partial_x\mathcal Q)\circ\partial_xY
-u'(\partial_x\mathcal Q\circ\partial_xY)\big]
+u'(\partial_x\mathcal Q\circ\partial_xY).
\end{align*}
By Lemma~\ref{le;commutater;D}, the commutator
$$
\partial_x(u'\prec\mathcal Q)-u'\prec\partial_x\mathcal Q
$$
gains $\kappa-1$-orders regularity, and hence the first term on the right-hand side is well-defined. Although the second term does not arise from such a derivative gain, it can nevertheless be controlled by Lemma \ref{le;commutator;resonant}. More precisely
\begin{align*}
    C(u',\p_x \ml Q,\p_x Y)=\big[(u'\prec\partial_x\mathcal Q)\circ\partial_xY
-u'(\partial_x\mathcal Q\circ\partial_xY)\big]
\end{align*}
defines a bounded operator from $\ml C^{\kappa-1}\times W^{\kappa-1}\times W^{\kappa-2}$ to $W^{3\kappa-4}$ provided that $3\kappa-4>0$. Since $\kappa<2\eta+\frac{3}{2}$, such a $\kappa$ can be chosen whenever $2\eta+\frac{3}{2}>\frac{4}{3}$, namely, when $\eta>-\frac{1}{12}
$.
in Lemma~\ref{le;commutator;resonant}. Consequently, the only genuinely singular object that remains to be defined is
$$
\partial_x\mathcal Q\circ\partial_xY.
$$
Once this resonant product is constructed with the required regularity, the singular term above, and hence \(\partial_xv\,\partial_xY\), can be interpreted within the paracontrolled framework. We establish the required construction in the following lemma.
\begin{lemma}\label{le;high order}
        For $\eta\in(-\frac18,\frac{1}{8}]$, $\kappa<2\eta+\frac{3}{2}$, we have
        \begin{equation}
            \p_x\ml Q_\sigma\circ \p_xY_\sigma\to \p_x \ml Q\circ \p_xY
        \end{equation}
        in $L^2(\Omega;C_TW^{2\kappa-3})$ as $\sigma\to 0$.
        \begin{proof}
          Using a similar argument as in the proof of Lemma~\ref{le;resonant}, it suffices to estimate $\m E[\Vert(\p_x\ml Q\circ\p_xY)_{s,t}\Vert_{H^{2\kappa-3}}^{2p}]$ and $\m E[\Vert (\p_x\ml Q\circ\p_xY)_{s,t}\Vert_{B^{2\kappa-3}_{2p,2p}}^{2p}]$. Let $\widehat {\ml Q}_{s,t}(k)=\widehat {\ml Q}(t,k)-\widehat {\ml Q}(s,k)$ and $\widehat {Y}_{s,t}(k)=\widehat {Y}(t,k)-\widehat {Y}(s,k)$. Notice that $\widehat {\ml Q}(t,k)\widehat Y(t,k')-\widehat {\ml Q}(s,k)\widehat Y(s,k')=\widehat {\ml Q}_{s,t}(k)\widehat Y(t,k')+\widehat {\ml Q}(s,k)\widehat Y_{s,t}(k')$. Therefore, we have
          \begin{align*}
              \m E[(\widehat {\ml Q}(t,k)\widehat Y(t,k')-&\widehat {\ml Q}(s,k)\widehat Y(s,k'))((\widehat {\ml Q}(t,l)\widehat Y(t,l')-\widehat {\ml Q}(s,l)\widehat Y(s,l')))]\\
              =& \m E[(\widehat {\ml Q}_{s,t}(k)\widehat Y(t,k')+\widehat {\ml Q}(s,k)\widehat Y_{s,t}(k'))((\widehat {\ml Q}_{s,t}(l)\widehat Y(t,l')+\widehat {\ml Q}(s,l)\widehat Y_{s,t}(l')))]\\
              =&\m E[\widehat {\ml Q}_{s,t}(k)\widehat Y(t,k')\widehat {\ml Q}_{s,t}(l)\widehat Y(t,l')]+\m E[\widehat {\ml Q}(s,k)\widehat Y_{s,t}(k')\widehat {\ml Q}_{s,t}(l)\widehat Y(t,l')]\\
              &+\m E[\widehat {\ml Q}_{s,t}(k)\widehat Y(t,k')\widehat {\ml Q}(s,l)\widehat Y_{s,t}(l')]+\m E[\widehat {\ml Q}(s,k)\widehat Y_{s,t}(k')\widehat {\ml Q}(s,l)\widehat Y_{s,t}(l')].
          \end{align*}
        By Wick's theorem and Lemma~\ref{le;covariance; QX}, we obtain
        \begin{align*}
               \m E[(\widehat {\ml Q}(t,k)\widehat Y(t,k')-&\widehat {\ml Q}(s,k)\widehat Y(s,k'))((\widehat {\ml Q}(t,l)\widehat Y(t,l')-\widehat {\ml Q}(s,l)\widehat Y(s,l')))]\lesssim \frac{(t-s)^\delta |k|^{2\delta}}{(1+|k|^2)^{4\eta}|k|^6}.
        \end{align*}
        At the same time, using a similar argument as in Lemma~\ref{le;resonant}, we have
        \begin{align*}
    		\m E[\vert \sum_{\vert i-j\vert \leq 1}1_{i\gtrsim l}\Delta_l(\Delta_i\p_x\ml Q\Delta_j\p_xY)_{s,t}\vert^2]\lesssim\sum_{i\gtrsim l}2^{l}|t-s|^\delta2^{i(2\delta-8\eta-1)}.
    	\end{align*}
      Since $\eta>-1/8$, we may choose $\delta>0$ sufficiently small such that $2\delta-8\eta-1<0$. Therefore, it follows from Gaussian hypercontractivity that 
      \begin{align*}
    		\m E[\Vert \Delta_l((\p_x\ml Q\circ\p_xY))_{s,t}\Vert_{L^2}^{2p}]\lesssim |t-s|^{\delta p}2^{pl(2\delta-8\eta)}.
    	\end{align*}
 By Minkowski's inequality, for $\kappa<2\eta+\frac{3}{2}$, we may choose $\delta>0$ sufficiently small such that $2\kappa-3+\delta-4\eta<0$, and obtain
    	\begin{align*}
    		\m E[\Vert(\p_x\ml Q\circ\p_xY)_{s,t}\Vert_{H^{2\kappa-3}}^{2p}]=&\m E[(\sum_{l\geq-1}2^{2l(2\kappa-3)}\Vert \Delta_l(\p_x\ml Q\circ\p_xY)_{s,t}\Vert_{L^2}^2)^p]\\
            \leq&(\sum_{l\geq -1}2^{2l(2\kappa-3)}(\m E[\Vert \Delta_l(\p_x\ml Q\circ\p_xY)_{s,t}\Vert_{L^2}^{2p}])^{\frac{1}{p}})^p\\
    		\lesssim&(\sum_{l\geq -1}2^{2l(2\kappa-3+\delta-4\eta)})^p|t-s|^{\delta p}\lesssim |t-s|^{\delta p}
    	\end{align*}
  and 
      	\begin{align*}
    		\m E[\Vert (\p_x\ml Q\circ\p_xY)_{s,t}\Vert_{B^{2\kappa-3}_{2p,2p}}^{2p}]
            \lesssim&
            \sum_{l\geq -1}2^{2l(2\kappa-3) p}\Vert\m E |\Delta_l(\p_x\ml Q\circ\p_xY)_{s,t}|^2\Vert_{L^p}^{p}\\
    		\lesssim&
            \sum_{l\geq -1}2^{2l(2\kappa-3+\delta-4\eta)p}|t-s|^{\delta p}\lesssim |t-s|^{\delta p}.
    	\end{align*}
      	Choosing $p$ large enough, we deduce from Kolmogorov's theorem that $\p_x\ml Q\circ \p_xY\in C_TW^{2\kappa-3}$. The convergence follows by the same argument as in the proof of Lemma~\ref{le;resonant}. This completes the proof. 
        \end{proof}
     \end{lemma}

\section{Local well-posedness}\label{sec;wellposedness}
In this section, we study the local well-posedness of \eqref{eq;CH;diffusion}.
\subsection{The case \texorpdfstring{$\eta\in(1/4,\infty)$}{eta in (1/4, infinity)}}
We begin with the least singular regime, in which the stochastic convolution \(X\) is sufficiently regular for the nonlinear interactions to be interpreted without introducing any additional stochastic enhancement. Recall that \(X\) is defined by \eqref{eq;X(t) decomposition}. Applying the Da Prato--Debussche decomposition
$$
    u=X+v,
$$
we obtain the following equation for the remainder \(v\):
\begin{equation}\label{v}
\partial_t v-\Delta v
=-(v+X)\partial_x(v+X)
+P(D)\left((v+X)^2+\frac12(v_x+X_x)^2\right).
\end{equation}
In this regime, the regularity of \(X\) is sufficient to give a classical meaning to all the products appearing in \eqref{v}. We can therefore establish local well-posedness by applying standard deterministic estimates to the remainder equation. The precise statement is given in the following proposition.

\begin{proposition}\label{local_thm}
    Let $\eta>\frac{1}{4},\ \kappa\in(2,2\eta+\frac{3}{2}),\ s\in[1,\kappa)$ and $u_0\in H^s(\m T)$. Then there exist a positive time $T^*=C(\Vert u_0\Vert_{H^s(\m T)},\Vert X\Vert_{C_1W^{\kappa-1}(\m T)})$ and a solution $u(t,x)$ of equation \eqref{eq;CH;diffusion} satisfying
    $$u(t,x)=X(t,x)+v(t,x),\quad t\in [0,T^*],$$
    where $v(t,x)$ satisfies \eqref{v} with $v(t,x)\in C_{T^*}H^s(\m T)~ a.s.$ and $X(t,x)$ satisfies \eqref{eq;X(t) decomposition} with $X(t,x)\in C_{T^*}W^{\kappa-1}(\m T)~ a.s.$.      
\end{proposition}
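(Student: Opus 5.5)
We will solve the remainder equation \eqref{v} in mild form by a contraction argument in a closed ball of $C_{T}H^s(\m T)$, fixing a realisation of $X$ for which $X\in C_1W^{\kappa-1}$ (this holds a.s. by Lemma \ref{X_bdd}, since $\kappa<2\eta+\frac32$). Using $(v+X)\partial_x(v+X)$ and $P(D)=-\partial_x(1-\partial_x^2)^{-1}$, the Duhamel formulation reads
\begin{align*}
\Phi(v)(t)=e^{t\Delta}u_0&-B(v+X,v+X)(t)-\widetilde B(v+X,v+X)(t)-\tfrac12\widetilde B(v_x+X_x,v_x+X_x)(t),
\end{align*}
up to harmless sign conventions in the definition of $\widetilde B$. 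Expanding each bilinear term by bilinearity splits $\Phi(v)-e^{t\Delta}u_0$ into three groups. The first group is quadratic in $v$: $B(v,v)$, $\widetilde B(v,v)$ and $\widetilde B(v_x,v_x)$. The second group is mixed: $B(v,X)$, $B(X,v)$, $\widetilde B(v,X)$ and $\widetilde B(v_x,X_x)$. The third group is purely stochastic: $B(X,X)$, $\widetilde B(X,X)$ and $\widetilde B(X_x,X_x)$.

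Each group is controlled by Lemma \ref{estimate_B}, whose hypotheses hold since $\eta>\frac14$, $\kappa\in(2,2\eta+\frac32)$ and $s\in[1,\kappa)=[\max(1,3-\kappa),\kappa)$. Part (1) bounds the quadratic group by $C_1T^{1/4}(1+T^{1/2})\|v\|_{C_TH^s}^2$. Part (3), applied with $f=X$ and $g=v$, bounds the mixed group by $C_3T^{\theta}\|X\|_{C_1W^{\kappa-1}}\|v\|_{C_TH^s}$ for some $\theta>0$. Here we need $1-\rho_1/2>0$; this holds because $\rho_1\le s-\kappa+2<2$. Part (2), including the additional estimate valid for $\eta>\frac14$, bounds the stochastic group by $C_2T^{\theta}\|X\|^2_{C_1W^{\kappa-1}}$. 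Here $\rho_2=\max(s-\kappa+1,0)<1$, so the exponent $\frac{1-\rho_2}{2}$ is positive. Finally, $\|e^{t\Delta}u_0\|_{C_TH^s}\le\|u_0\|_{H^s}$.

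Setting $R=2\|u_0\|_{H^s}$, we then obtain for $T\le1$
\[
\|\Phi(v)\|_{C_TH^s}\le \tfrac R2+CT^{\theta}\big(R^2+R\|X\|_{C_1W^{\kappa-1}}+\|X\|^2_{C_1W^{\kappa-1}}\big),
\]
with $\theta=\min(\frac14,1-\frac{\rho_1}2,\frac{1-\rho_2}2)$. By bilinearity, the difference satisfies
\[
\|\Phi(v_1)-\Phi(v_2)\|_{C_TH^s}\le CT^\theta\big(R+\|X\|_{C_1W^{\kappa-1}}\big)\|v_1-v_2\|_{C_TH^s},
\]
where only the quadratic and mixed groups survive. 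Choosing $T^*$ small, depending only on $\|u_0\|_{H^s}$ and $\|X\|_{C_1W^{\kappa-1}}$, makes $\Phi$ a contraction on the ball of radius $R$. Banach's fixed point theorem then gives $v$, and $u=X+v$ solves \eqref{eq;CH;diffusion}.

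It remains to upgrade $v$ from $L^\infty_{T^*}H^s$ to $C_{T^*}H^s$. For this we use strong continuity of $e^{t\Delta}$ on $H^s$ together with continuity in time of the Duhamel integrals. The latter follows from the integrable kernel bounds in the proof of Lemma \ref{lemma;B1}, splitting $\int_0^{t'}-\int_0^t$ into a short-interval piece and a piece controlled by $e^{(t'-t)\Delta}-1$ acting on an $H^{s+}$ element. Measurability and the a.s. statement follow because $T^*$ is a measurable function of $\|X\|_{C_1W^{\kappa-1}}$.

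I expect the only delicate point to be verifying that the powers of $T$ in part (2) and in the $B(g,f)$ estimate of part (3) are strictly positive for the whole range $s\in[1,\kappa)$. This reduces to $\rho_1<2$ and $\rho_2<1$, both guaranteed by $s<\kappa$. Everything else is routine bilinear bookkeeping.
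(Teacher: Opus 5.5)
Your proposal is correct and follows essentially the paper's argument: the paper also writes $v=P(t)u_0-[\frac12\mathcal B(v,v)+\mathcal B(v,X)+\frac12\mathcal B(X,X)]$ and closes a fixed point in a ball of radius $2\|u_0\|_{H^s}$ in $C_{T}H^s$. It only packages this through the abstract Lemma~\ref{le;continuous dependence} with $L(v)=\mathcal B(v,X)$, checking the three smallness conditions with parts (1)--(3) of Lemma~\ref{estimate_B} exactly as you do; your explicit time-continuity step is a detail the paper leaves implicit, and the radius $R=2\|u_0\|_{H^s}$ (which you share with the paper) degenerates when $u_0=0$, which is harmlessly fixed by taking, e.g., $R=2\|u_0\|_{H^s}+1$.
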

\begin{proof}
    By Lemma \ref{X_bdd}, we have $X\in C_{1}(W^{\kappa-1})$ where $X$ satisfies \eqref{eq;X(t) decomposition}. Hence, it suffices to prove that there exists a positive time $T^*\leq 1$ such that $v\in C_{T^*}(H^s)$, where $v$ satisfies \eqref{v}. 
    
    Let $r\triangleq2\|u_0\|_{H^s}$. Denote
    \begin{equation}\label{def;B}
         \mathcal{B}(f,g)\triangleq B(f,g)+B(g,f)+2\widetilde{B}(f,g)+\widetilde{B}(f_x,g_x).
    \end{equation}
    The solution $v$ of \eqref{v} takes the mild form of
    \begin{equation}
          v(t,x)=P(t)u_0-[\frac{1}{2}\mathcal{B}(v,v)+\mathcal{B}(v,X)+\frac{1}{2}\mathcal{B}(X,X)],
    \end{equation}
    where $P(t)f$ satisfies the linear equation
    \begin{equation}
        \partial_tP(t)f- \Delta P(t)f=0,\quad P(0)f=f.
    \end{equation}
    By Lemma \ref{le;continuous dependence} with $L(v)=\mathcal{B}(v,X)$, it suffices to prove that there exists a positive time $T^*\leq 1$ such that
    \begin{gather}
        a\triangleq P(T^*)u_0-\frac{1}{2}\mathcal{B}(X,X)\in B(0,r)\subset C_{T^*}(H^s),\label{a}\\
        \sup_v\frac{\|\mathcal{B}(v,X)\|_{C_{T^*}(H^s)}}{\|v\|_{C_{T^*}(H^s)}}\leq\frac{1}{4},\label{B(v,X)}\\
        \|\mathcal{B}\|_{C_{T^*}(H^s)}\leq\frac{1}{4r}.\label{B(v,v)}
    \end{gather}
    For \eqref{a}, fix $T\in(0,1)$ small enough such that 
    \begin{align}\label{T2}
        C_2\left(T^{1-\frac{\rho_1}{2}}(1+T^{\frac{\rho_1}{2}})+T^{\frac{1-\rho_2}{2}}\right)\|X\|_{C_{1}(W^{\kappa-1})}^2\leq\frac{r}{2}.
    \end{align}
    Then, by Lemma \ref{estimate_B}, we have
    \begin{align*}
        \|a\|_{C_{T}(H^s)}&\leq \|P(\cdot)u_0\|_{C_{T}(H^s)}+\frac{1}{2}\|\mathcal{B}(X,X)\|_{C_{T}(H^s)}\\
        &\leq\|u_0\|_{H^s}+C_2\left(T^{1-\frac{\rho_1}{2}}(1+T^{\frac{\rho_1}{2}})+ T^{\frac{1-\rho_2}{2}}\right)\|X\|_{C_{1}(W^{\kappa-1})}^2\\
        &\leq r.
    \end{align*}
    For \eqref{B(v,X)}, fix $T\in(0,1)$ small enough such that
    \begin{align}\label{T3}
        C_3 \left(T^{\frac{1}{2}}(1+T^{\frac{1}{2}})+T^{1-\frac{\rho_1}{2}}(1+T^{\frac{\rho_1}{2}})+2 T^{\frac{1}{4}}\right)\|X\|_{C_{1}(W^{\kappa-1})}\leq\frac{1}{4}.
    \end{align}
    Then, from Lemma \ref{estimate_B} we obtain
    \begin{align*}
        &\frac{\|\mathcal{B}(v,X)\|_{C_{T}(H^s)}}{\|v\|_{C_{T}(H^s)}}\\
        \leq &C_3 \left(T^{\frac{1}{2}}(1+T^{\frac{1}{2}})+T^{1-\frac{\rho_1}{2}}(1+T^{\frac{\rho_1}{2}})+2 T^{\frac{1}{4}}\right)\|X\|_{C_{1}(W^{\kappa-1})}\\
        \leq &\frac{1}{4}.
    \end{align*}
    For \eqref{B(v,v)}, fix $T\in(0,1)$ small enough such that
    \begin{align}\label{T1}
       2C_1T^{\frac{1}{4}}( T^{\frac{1}{4}}+T^{\frac{3}{4}}+1)\leq\frac{1}{4r}.
    \end{align}
    Lemma \ref{estimate_B} then yields
    $$\|\mathcal{B}(v,v)\|_{C_T(H^s)}\leq 2C_1T^{\frac{1}{4}}( T^{\frac{1}{4}}+T^{\frac{3}{4}}+1)\|v\|_{C_T(H^s)}^2\leq\frac{1}{4r}\|v\|_{C_T(H^s)}^2.$$
    Taking $T^*\leq T$, we complete the proof.
\end{proof}

\subsection{The case of \texorpdfstring{$\eta\in(1/8,1/4]$}{eta in (1/8, 1/4]}}
We next turn to a rougher regime, where the classical interpretation of the nonlinear terms begins to break down. Indeed, since
$$
    X\in C_TW^{2\eta+\frac12-},
$$
the spatial derivative \(X_x\) has negative regularity, and hence the product \(X_x^2\) is no longer well-defined in the classical sense. This constitutes the first singular interaction that requires an enhancement of the stochastic convolution. We therefore define
$$
    X_x^2:=2X_x\prec X_x+X_x\circ X_x,
$$
where the resonant product \(X_x\circ X_x\) is constructed in Lemma~\ref{le;resonant}. With this enhanced second-order object at hand, all terms appearing in \eqref{v} can be given a rigorous meaning. The local well-posedness argument can then be carried out along the same lines as in Proposition~\ref{local_thm}, leading to the following result.

\begin{proposition}\label{local_thm;rough}
    Let $\eta\in(\frac{1}{8},\frac{1}{4}],\ \kappa\in(\frac{3}{2},2\eta+\frac{3}{2}),\ s\in[3-\kappa,\kappa)$ and $u_0\in H^s(\m T)$. Then there exist a positive $T^*=C(\Vert u_0\Vert_{H^s(\m T)},\Vert X\Vert_{C_1W^{\kappa-1}(\m T)},\Vert \p_x (X_x)^2 \Vert_{C_1W^{2\kappa-5}(\m T)})$ and a solution $u(t,x)$ of equation \eqref{eq;CH;diffusion} satisfying
    $$u(t,x)=X(t,x)+v(t,x),\quad t\in [0,T^*],$$
    where $v(t,x)$ satisfies \eqref{v} with $v(t,x)\in C_{T^*}H^s(\m T)~ a.s.$ and $X(t,x)$ satisfies \eqref{eq;X(t) decomposition} with $X(t,x)\in C_{T^*}W^{\kappa-1}(\m T)~ a.s.$. Moreover, we have $u\in C_{T^*}H^{\kappa-1}(\m T)$ a.s.. 
    \end{proposition}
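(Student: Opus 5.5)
We follow the proof of Proposition~\ref{local_thm} and make one change: the term $\frac12\widetilde B(X_x,X_x)$ inside $\frac12\mathcal B(X,X)$ is no longer estimated through products of $X_x$. It is read off the enhanced object from Lemma~\ref{le;resonant}. We work in the mild formulation
\begin{equation*}
v=P(t)u_0-\Big[\tfrac12\mathcal B(v,v)+\mathcal B(v,X)+\tfrac12\mathcal B(X,X)\Big],
\end{equation*}
with $\mathcal B$ as in \eqref{def;B}. We then apply Lemma~\ref{le;continuous dependence} with $L(v)=\mathcal B(v,X)$ and radius $r=2\|u_0\|_{H^s}$. As in the earlier proof, three conditions must be checked on a short interval $[0,T^*]$ with $T^*\le1$: the forcing $a$ lies in $B(0,r)$, the linear part satisfies the bound \eqref{B(v,X)}, and the quadratic part satisfies \eqref{B(v,v)}.

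\textbf{Step 1 (the purely stochastic forcing).} The terms $B(X,X)$ and $\widetilde B(X,X)$ in $\mathcal B(X,X)$ are controlled by Lemma~\ref{estimate_B}(2). The remaining term $\widetilde B(X_x,X_x)$ equals $\int_0^t e^{(t-\tau)\Delta}P(D)(X_x^2)\,d\tau$ up to sign, where $X_x^2:=2X_x\prec X_x+X_x\circ X_x$. Lemma~\ref{le;resonant} gives $P(D)(X_x^2)\in C_TW^{2\kappa-3}$, with norm bounded by $\|X\|_{C_1W^{\kappa-1}}^2+\|\partial_x(X_x\circ X_x)\|_{C_1W^{2\kappa-5}}$. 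The heat-flow smoothing estimate \eqref{B} of Lemma~\ref{lemma;B1} (with $\sigma=2\kappa-3$ and $\rho=\max(s-2\kappa+3,0)$) then yields a factor $T^{1-\rho/2}$. We have $\rho<2$ because $s<\kappa<2\kappa-1$. Hence $\|a\|_{C_TH^s}\le r$ once $T$ is small relative to the enhanced-data norm.

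\textbf{Step 2 (the linear and quadratic terms).} For $\mathcal B(v,X)$ we use Lemma~\ref{estimate_B}(3). For $\mathcal B(v,v)$ we use Lemma~\ref{estimate_B}(1). Both lemmas already cover the range $\kappa\in(\frac32,2\eta+\frac32)$ and $s\ge 3-\kappa$. The condition $s\ge3-\kappa$ is exactly what makes the mixed product $v_xX_x$ well defined: $v_x\in H^{s-1}$, $X_x\in W^{\kappa-2}$, and $(s-1)+(\kappa-2)\ge0$. It also ensures that $\widetilde B(v_x,X_x)$, which involves $H^{s-2}$ data smoothed by $P(D)$, lands back in $H^s$ with a positive power of $T$. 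Choosing $T$ as in \eqref{T3} and \eqref{T1} gives the linear and quadratic bounds. The fixed point in $C_{T^*}H^s$ then follows from Lemma~\ref{le;continuous dependence}. Continuity in time follows from the strong continuity of the heat semigroup together with the continuity of the Duhamel integrals.

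\textbf{Step 3 (regularity of $u$).} Since $s\ge 3-\kappa>\kappa-1$ (because $\kappa<2$), we have $v\in C_{T^*}H^s\subset C_{T^*}H^{\kappa-1}$. Combined with $X\in C_{T^*}W^{\kappa-1}\subset C_{T^*}H^{\kappa-1}$, this gives $u=X+v\in C_{T^*}H^{\kappa-1}$. The main obstacle is Step 1: checking that the $C_TW^{2\kappa-3}$ regularity of the renormalized square, fed through $\widetilde B$, reaches $H^s$ with a positive power of $T$ over the whole range $s<\kappa$. It is also worth double-checking that Lemma~\ref{estimate_B}(3), with $f=X\in W^{\kappa-1}$ of regularity below $1$, genuinely applies at the endpoint $s=3-\kappa$. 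If it does not, we would shrink the allowed range slightly and argue by interpolation.
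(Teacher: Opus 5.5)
Your proposal is correct and follows the paper's proof essentially step for step. It uses the same mild formulation and fixed-point lemma with $L(v)=\mathcal{B}(v,X)$ and $r=2\|u_0\|_{H^s}$, Lemma~\ref{estimate_B} for $B(X,X)$, $\widetilde{B}(X,X)$, $\mathcal{B}(v,X)$ and $\mathcal{B}(v,v)$, and the renormalized square from Lemma~\ref{le;resonant} fed through the heat-smoothing bound \eqref{B} with $\rho=s-(2\kappa-3)\in(0,2)$. Your endpoint caveat is unnecessary, since Lemma~\ref{estimate_B} is stated on the closed range $s\geq 3-\kappa$, and your Step 3 correctly supplies the $C_{T^*}H^{\kappa-1}$ regularity of $u$ that the paper's proof leaves implicit.
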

    \begin{proof}
        Following the same lines as the proof of Proposition \ref{local_thm}, it suffices to show that there exists a positive $T^*\leq 1$ such that \eqref{a}-\eqref{B(v,v)} hold for $r\triangleq2\|u_0\|_{H^s}$. We deduce from Lemmas \ref{lemma;B1}, \ref{estimate_B} and \ref{le;resonant} that 
        \begin{align*}
        \|a\|_{C_{T}(H^s)}\leq& \|P(\cdot)u_0\|_{C_{T}(H^s)}+\frac{1}{2}\|\mathcal{B}(X,X)\|_{C_{T}(H^s)}\\
        \leq&\|u_0\|_{H^s}+C_2\left(T^{1-\frac{\rho_1}{2}}(1+T^{\frac{\rho_1}{2}})+ T^{\frac{1}{2}}\right)\|X\|_{C_{1}(W^{\kappa-1})}^2\\
        &+CT^{1-\frac{\rho}{2}}(1+T^{\frac{\rho}{2}})\|\partial_x(X_x^2)\|_{C_{1}(W^{2\kappa-5})},
    \end{align*}
    where $\rho=s-(2\kappa-3)\in(0,2)$. Fix $T\in(0,1)$ small enough such that
    $$C_2\left(T^{1-\frac{\rho_1}{2}}(1+T^{\frac{\rho_1}{2}})+ T^{\frac{1}{2}}\right)\|X\|_{C_{1}(W^{\kappa-1})}^2+CT^{1-\frac{\rho}{2}}(1+T^{\frac{\rho}{2}})\|\partial_x(X_x^2)\|_{C_{1}(W^{2\kappa-5})}\leq\frac{r}{2}$$
    and \eqref{T3}-\eqref{T1} hold. The proof is then completed by taking $T^*\leq T$.
    \end{proof}
 
\subsection{The case of \texorpdfstring{$\eta\in(0,1/8]$}{eta in (0, 1/8]}}
In the present regime, the enhanced structure introduced in the previous subsection is no longer sufficient to capture all the singular interactions. We therefore enlarge the stochastic data by incorporating the higher-order process \(X^{\dagger 2}\). More precisely, define
$$
    \mathbb X:=\{Y,X^{\dagger 2}\},
$$
where \(Y\) and \(X^{\dagger 2}\) are the stochastic processes constructed in Lemmas~\ref{X_bdd} and~\ref{le;high order}, respectively. For \(\kappa\in\mathbb R\), we equip \(\mathbb X\) with the norm
\begin{equation}
 \Vert \m X\Vert_{X^{\kappa}(t)}=\Vert Y\Vert_{\widetilde W^{\kappa-1}(t)}+\Vert X^{\dag 2}\Vert_{\widetilde W^{2\kappa-1}(t)} .
\end{equation}

Given the initial condition \(u_0\) and the enhanced data \(\mathbb X\), we say that a pair \((u,w)\) is a solution driven by \((u_0,\mathbb X)\) if
$$
    w(0)=u_0+Y(0),
$$
and
\begin{equation}\label{w}
\begin{aligned}
u=&Y+X^{\dagger 2}+w,\\
\partial_t w- \Delta w
=&P(D)\big(
(w+X^{\dagger 2})^2
+2(w+X^{\dagger 2})Y
+Y^2
+\frac12(w+X^{\dagger 2})_x^2
+(w+X^{\dagger 2})_xY_x
\big)\\
&-(w+X^{\dagger 2})(w+X^{\dagger 2})_x
-\partial_x\big((w+X^{\dagger 2})Y\big)
-YY_x .
\end{aligned}
\end{equation}
With this formulation, the singular stochastic contributions are encoded in the enhanced data \(\mathbb X\), while the remainder \(w\) satisfies an equation that can be treated by deterministic estimates. We are now in a position to state the corresponding local well-posedness result.

\begin{proposition}\label{local_thm;high}
    Let $\eta\in(0,\frac{1}{8}],\ \kappa\in(\frac{3}{2},2\eta+\frac{3}{2}),\ s\in[3-\kappa,\kappa)$, $u_0\in H^s(\m T)$, then there exist a positive $0<T^*\leq T=\sup\{t\geq0:\Vert u\Vert_{C_{t}W^{\kappa-1}}\}$ and unique solution $(u,v)\in C_{T^*}W^{\kappa-1}\times C_{T^*}H^s$  of \eqref{w} driven by $(u_0,\m X)$.
\end{proposition}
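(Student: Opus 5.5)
We will follow the fixed-point scheme of Proposition~\ref{local_thm}, now applied to the remainder $w$ in \eqref{w}. The random data enter only through $Y\in C_TW^{\kappa-1}$ (Lemma~\ref{X_bdd}, applied to the stationary version) and $X^{\dag2}\in C_TW^{2\kappa-1}$ (the lemma on $Y^{\dag2},X^{\dag2}$). The essential structural point is that the most singular term, $P(D)(Y_x^2)$, no longer appears explicitly in \eqref{w}. It has been absorbed into $X^{\dag2}$, which solves $\mathcal L X^{\dag2}=P(D)(Y_x)^2$.

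Writing $Z:=w+X^{\dag2}$ and using the operators of \eqref{def;B}, we put \eqref{w} in mild form:
\begin{equation*}
w=P(t)(u_0+Y(0))-\Big[\tfrac12\mathcal B(w,w)+\mathcal B(w,X^{\dag2})+\mathcal B(w,Y)+\tfrac12\mathcal B(X^{\dag2},X^{\dag2})+\mathcal B(X^{\dag2},Y)+\tfrac12\mathcal B^{\flat}(Y,Y)\Big].
\end{equation*}
Here $\mathcal B^\flat(Y,Y)$ denotes the part of $\mathcal B(Y,Y)$ without the $\widetilde B(Y_x,Y_x)$ term, namely $B(Y,Y)$ plus $\widetilde B(Y,Y)$ with the appropriate constants. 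One point needs care: $Y(0)$ has regularity $\kappa-1<s$. We therefore plan either to treat $P(t)Y(0)$ as part of the data in $C_TW^{\kappa-1}$, or to note that $u$ is only claimed in $W^{\kappa-1}$ and that the $Y(0)$ contribution is carried by $P(t)Y(0)$ at the level of $u$.

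We then check the three conditions \eqref{a}, \eqref{B(v,X)} and \eqref{B(v,v)} on a ball of radius $r=2\|u_0\|_{H^s}$ in $C_{T^*}H^s$, and conclude with Lemma~\ref{le;continuous dependence}, where $L(w)=\mathcal B(w,X^{\dag2}+Y)$.
\begin{enumerate}
\item For the quadratic term, Lemma~\ref{estimate_B}(1) gives $\|\mathcal B(w,w)\|\le C T^{1/4}\|w\|^2$. This uses $s\ge 3-\kappa$, which guarantees that the product $w_x^2$ lies in $H^{s-2}$ through the product laws.
\item For the linear term, Lemma~\ref{estimate_B}(3) with $f=Y\in W^{\kappa-1}$, $g=w$ bounds $\mathcal B(w,Y)$, including $\widetilde B(w_x,Y_x)$, by $CT^{\theta}\|Y\|\|w\|$. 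For $f=X^{\dag2}\in W^{2\kappa-1}\subset W^{\kappa-1}$, the same lemma bounds $\mathcal B(w,X^{\dag2})$.
\item For the source term, Lemma~\ref{estimate_B}(2) controls $B(Y,Y)$, $\widetilde B(Y,Y)$, $\widetilde B(X^{\dag2}_x,X^{\dag2}_x)$ and $\widetilde B(Y_x,X^{\dag2}_x)$ by $CT^{\theta}$ times polynomials in $\|\mathbb X\|_{X^\kappa(1)}$. The remaining terms $B(X^{\dag2},Y)$, $B(Y,X^{\dag2})$ and $B(X^{\dag2},X^{\dag2})$ are handled in the same way, using that $X^{\dag2}$ is more regular than $Y$.
\end{enumerate}
Choosing $T^*$ small enough, exactly as in \eqref{T2}, \eqref{T3} and \eqref{T1}, closes the contraction. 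This yields existence and uniqueness of $w\in C_{T^*}H^s$. Then $u=Y+X^{\dag2}+w\in C_{T^*}W^{\kappa-1}$, because $s\ge 3-\kappa>\kappa-1$ when $\kappa<2$ ($W$ also includes the $\mathcal C$ norm, and $H^s\subset \mathcal C^{s-1/2}$ is enough at this regularity). The restriction $T^*\le T$ comes from the maximal time on which the data norms stay finite.

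The main obstacle is the mixed term $\widetilde B(Y_x,X^{\dag2}_x)$, together with the product $\partial_x(X^{\dag2}Y)$. The regularity sum $(\kappa-2)+(2\kappa-2)=3\kappa-4$ must be positive for the product to be classical. This holds precisely because $\kappa>3/2$, which is possible since $\eta>0$. Likewise, $w_xY_x$ requires $(s-1)+(\kappa-2)>0$, which is exactly the condition $s\ge3-\kappa$ (taken with a strict margin). We would verify these two product bounds first, via Bony's decomposition, since every other estimate is a direct citation of Lemma~\ref{estimate_B}.
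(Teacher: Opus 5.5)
Your plan is the paper's proof. It uses the same mild form of \eqref{w} with $L(w)=\mathcal B(w,Y)+\mathcal B(w,X^{\dag 2})$, bounds every term via Lemma~\ref{estimate_B}, chooses $T^*$ as in \eqref{T2}, \eqref{T3}, \eqref{T1}, closes with Lemma~\ref{le;continuous dependence}, and uses $H^s\hookrightarrow W^{s-1/2}$ to place $u$ in $C_{T^*}W^{\kappa-1}$. On the initial datum, commit to your second option: the paper's mild form starts from $P(t)u_0$, i.e.\ $w(0)=u_0$, because $Y(0)$ is already the initial value of $Y$, and this gives $u(0)=u_0+Y(0)$ as in Theorem~\ref{th;main results}. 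Your first option would count $Y(0)$ twice and would create products such as $\partial_x(P(t)Y(0))\,\partial_xY$, whose factors are only uniformly in $W^{\kappa-2}$ with $2\kappa-4<0$. Neither $X^{\dag 2}$ nor Lemma~\ref{estimate_B} covers those products.
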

\begin{proof}
    By Lemmas \ref{X_bdd} and \ref{le;high order}, we have $\m X\in X^{\kappa}(t)$. Hence, it suffices to prove that there exists a positive $T^*\leq 1$ such that $w\in C_{T^*}(H^s)$ where $w$ satisfies \eqref{w}. 
    
    As in the proof of Proposition \ref{local_thm}, let $r\triangleq2\|u_0\|_{H^s}$ and 
    $$
    \mathcal{B}(f,g)\triangleq B(f,g)+B(g,f)+2\widetilde{B}(f,g)+\widetilde{B}(f_x,g_x).
    $$
    Then, $w$ can be expressed as
    \begin{align}
        w(t,x)=&P(t)u_0-[\frac{1}{2}\mathcal{B}(w,w)+\mathcal{B}(w,Y)+\mathcal{B}(w,X^{\dag 2})]\notag\\
        &-[\mathcal{B}(Y,X^{\dag 2})+\frac{1}{2}\mathcal{B}(X^{\dag 2},X^{\dag 2})+B(Y,Y)+\widetilde{B}(Y,Y)],
    \end{align}
    where $P(t)f$ satisfies the linear equation
    \begin{equation}
        \partial_tP(t)f- \Delta P(t)f=0,\quad P(0)f=f.
    \end{equation}
    By Lemma \ref{le;continuous dependence} with $L(w)=\mathcal{B}(w,Y)+\mathcal{B}(w,X^{\dag 2})$, it suffices to show that there exists a positive $T^*\leq 1$ such that
    \begin{gather}
        b\triangleq P(T^*)u_0-[\mathcal{B}(Y,X^{\dag 2})+\frac{1}{2}\mathcal{B}(X^{\dag 2},X^{\dag 2})+B(Y,Y)+\widetilde{B}(Y,Y)]\in B(0,r)\subset C_{T^*}(H^s),\label{b}\\
        \sup_w\frac{\|\mathcal{B}(w,Y)+\mathcal{B}(w,X^{\dag 2})\|_{C_{T^*}(H^s)}}{\|w\|_{C_{T^*}(H^s)}}\leq\frac{1}{4},\label{B(w,X)}\\
        \|\mathcal{B}\|_{C_{T^*}(H^s)}\leq\frac{1}{4r}.\label{B(w,w)}
    \end{gather}
    For \eqref{b}, fix $T\in(0,1)$ small enough such that 
    \begin{align*}
        &3C_2\left(T^{1-\frac{\rho_1}{2}}(1+T^{\frac{\rho_1}{2}})+ T^{\frac{1}{2}}\right)\Vert \m X\Vert_{X^{\kappa}(1)}^2
        \leq \frac{r}{2}.
    \end{align*}
    Then, Lemma \ref{estimate_B} gives
    \begin{align*}
        \|b\|_{C_{T}(H^s)}\leq& \|P(\cdot)u_0\|_{C_{T}(H^s)}+\|\mathcal{B}(Y,X^{\dag 2})+\frac{1}{2}\mathcal{B}(X^{\dag 2},X^{\dag 2})+B(Y,Y)+\widetilde{B}(Y,Y)\|_{C_{T}(H^s)}\\
        \leq&\|u_0\|_{H^s}+3C_2\left(T^{1-\frac{\rho_1}{2}}(1+T^{\frac{\rho_1}{2}})+ T^{\frac{1}{2}}\right)\Vert \m X\Vert_{X^{\kappa}(1)}^2\\
        \leq& r.
    \end{align*}
    For \eqref{B(w,X)}, fix $T\in(0,1)$ small enough such that
    \begin{align*}
        C_3 \left(T^{\frac{1}{2}}(1+T^{\frac{1}{2}})+T^{1-\frac{\rho_1}{2}}(1+T^{\frac{\rho_1}{2}})+2 T^{\frac{1}{4}}\right)\Vert \m X\Vert_{X^{\kappa}(1)}\leq\frac{1}{4}.
    \end{align*}
    Then, by Lemma \ref{estimate_B}, we have
    \begin{align*}
        &\frac{\|\mathcal{B}(w,Y)+\mathcal{B}(w,X^{\dag 2})\|_{C_{T^*}(H^s)}}{\|w\|_{C_{T^*}(H^s)}}\\
        \leq &C_3 \left(T^{\frac{1}{2}}(1+T^{\frac{1}{2}})+T^{1-\frac{\rho_1}{2}}(1+T^{\frac{\rho_1}{2}})+2 T^{\frac{1}{4}}\right)\Vert \m X\Vert_{X^{\kappa}(1)}\\
        \leq &\frac{1}{4}.
    \end{align*}
    Similar to the proof of Proposition \ref{local_thm}, fixing $T\in(0,1)$ small enough such that \eqref{T1} holds is sufficient to guarantee \eqref{B(w,w)}. Then there exists $(u,v)$ be the solution of \eqref{w} on $[0,T^*]$ driven by $(u_0,\m X)$ with $v\in C_TH^s$. Noting that $H^s\hookrightarrow W^{s-1/2}$, we have $(u,v)\in C_TW^{\kappa-1}\times C_TH^s$.
    \end{proof}

\subsection{The case of \texorpdfstring{$\eta\in(-1/12,0]$}{eta in (-1/12, 0]}}

    \par In this subsection, we consider the case $\eta\leq0$. In this case, the main singularity comes from $P(D)(w_x\circ Y_x)$, since $w\in C_TH^{2\eta+\frac{3}{2}-}$ and $Y\in C_TW^{2\eta+\frac{1}{2}-}$. This problem can be solved using the para-controlled ansatz. Assume that $w=u'\pprec\ml Q+u^\sharp$,  where $u'\in \widetilde{W}^{2\eta+\frac{1}{2}-}(T)$, $\ml Q\in \widetilde{W}^{2\eta+\frac{3}{2}-}(T)$ and $u^\sharp\in \widetilde{H}^{4\eta+2-}(T)$ will be chosen later so that $w$ solves \eqref{w}. Decomposing $(u'\pprec\ml Q)_x\circ Y_x$ into four parts and using the commutator estimates from Appendix B combined with Lemma \ref{le;high order}, we can prove that $(u'\pprec\ml Q)_x\circ Y_x\in C_TH^{4\eta-}$(see \eqref{estimate;commutator} for details). Since $u^\sharp\in \widetilde{H}^{4\eta+2-}(T)$ and $Y\in C_TW^{2\eta+\frac{1}{2}-}$, we have $u^\sharp_x\circ Y_x\in C_TH^{6\eta+\frac{1}{2}-}$ provided $\eta\in(-\frac{1}{12},0]$. This is precisely the reason why we require $\eta>-\frac{1}{12}$ in Theorem \ref{th;main results}.
  To determine the appropriate choices of \(u'\) and \(\mathcal Q\), we require the paracontrolled component \(u'\pprec\mathcal Q\) to capture the leading singular part of \(w\), while \(u^\sharp\) represents the more regular remainder. Combining \eqref{w} with the commutator estimates from Appendix~B, we obtain
    \begin{align*}
        \ml Lu^\sharp=&\ml Lw-\ml L(u'\pprec\ml Q)\\
        =&P(D)(w_x\circ Y_x)-(w+Y+X^{\dag 2})\pprec Y_x-u'\pprec\ml{LQ}+C_TH^{4\eta-}.
    \end{align*}
 We therefore choose
\begin{equation}\label{eq:choice-uQ}
u'=-(w+Y+X^{\dagger2}),
\qquad
\mathcal L\mathcal Q=Y_x.
\end{equation}
With this choice, the two paraproduct terms cancel, and the remaining singular contribution \(P(D)(w_x\circ Y_x)\) can be controlled through the enhanced resonant product \(\mathcal Q_x\circ Y_x\). Consequently,
$$
    u^\sharp\in\widetilde H^{4\eta+2-}(T),
$$
which closes the paracontrolled ansatz.
\par We now introduce the functional framework for the fixed-point argument. Define the enhanced data by
\[
    \m X
    :=
    \big\{Y,X^{\dagger2},\mathcal Q_x\circ Y_x\big\},
\]
and define its norm by
\begin{equation}
    \Vert \m X\Vert_{X_{[s,t]}^\kappa}
    =
    \Vert Y\Vert_{\widetilde W^{\kappa-1}([s,t])}
    +\Vert X^{\dag 2}\Vert_{\widetilde W^{2\kappa-1}([s,t])}
    +\Vert \p_x \ml Q\circ \p_x Y\Vert_{\widetilde W^{2\kappa-3}([s,t])}.
\end{equation}
In particular, we write
\[
    \Vert \m X\Vert_{X_T^\kappa}
    =
    \Vert \m X\Vert_{X_{[0,T]}^\kappa}.
\]
Given the initial condition \(u_0\) and the enhanced data \(\mathbb X\), we say that a triple \((u,u',u^\sharp)\) is a paracontrolled solution driven by \((u_0,\mathbb X)\) if
\begin{equation}\label{eq:paracontrolled-decomposition}
u=-u'
=Y+X^{\dagger2}+w
=Y+X^{\dagger2}+u'\pprec\mathcal Q+u^\sharp,
\end{equation}
and \(u^\sharp\) satisfies
    \begin{equation}\label{eq;paracontrolled equation}
    	\begin{aligned}
    		u'=&-Y-X^{\dag2}-u'\pprec\ml Q-u^\sharp\\
    		\ml L u^\sharp=& -[\frac{1}{2}\p_x(w+Y+X^{\dag 2})^2-(w+Y+X^{\dag 2})\prec \p_xY]\\
            &+P(D)\big((w+Y+X^{\dag 2})^2
            +\frac12 (w_x+X_x^{\dag 2})^2+2(w_x+X_x^{\dag 2})Y_x)+u'\prec \p_xY-\ml L(u'\pprec \ml Q)\big)
    	\end{aligned}
    \end{equation}
together with the prescribed initial condition.
\begin{proposition}\label{prop;paracontrolled well-posedness}
    	Let $\eta\in(-1/12,0]$, $\kappa\in(\frac{5}{4}-\eta, \frac{3}{2}+2\eta)$, $\m X=\{Y,X^{\dag 2},\ml Q_x\circ X_x\}\in \widetilde W^{\kappa-1}(1)\times \widetilde W^{2\kappa-1}(1)\times \widetilde W^{2\kappa-3}(1)$, then there exists $T^*$ and unique triple $(u,u',u^\sharp)\in \widetilde W^{\kappa-1}(T^*)\times \widetilde W^{\kappa-1}(T^*)\times \widetilde H^{2\kappa-1}(T^*) $ be the solution of \eqref{eq;paracontrolled equation} driven by $(u_0,\m X)$.
    \end{proposition}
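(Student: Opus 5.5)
The plan is a Banach fixed point on the paracontrolled space, in the spirit of \cite{gubinelli2015paracontrolled} and \cite{zhang2026wellposednessregularityfractionalrough}. For $T\le 1$ I would use
\[
\mathcal D_T=\big\{(u',u^\sharp): u'\in\widetilde W^{\kappa-1}(T),\ u^\sharp\in\widetilde H^{2\kappa-1}(T)\big\},
\]
with norm $\Vert u'\Vert_{\widetilde W^{\kappa-1}(T)}+\Vert u^\sharp\Vert_{\widetilde H^{2\kappa-1}(T)}$. The map $\Gamma(u',u^\sharp)=(\tilde u',\tilde u^\sharp)$ is defined by:
\begin{itemize}
\item $w=u'\pprec\ml Q+u^\sharp$;
\item $\tilde u'=-(Y+X^{\dag2}+w)$;
\item $\tilde u^\sharp$ is the solution of $\ml L\tilde u^\sharp=\Phi(u',u^\sharp)$, where $\Phi$ is the right-hand side of \eqref{eq;paracontrolled equation}, with $\tilde u^\sharp(0)=u_0+Y(0)$ up to the paracontrolled part, which vanishes at $t=0$.
\end{itemize}
I would show that $\Gamma$ maps a ball of radius $R\sim 1+\Vert u_0\Vert_{H^{2\kappa-1}}+\Vert\m X\Vert_{X^\kappa_1}$ into itself and is a contraction there for $T^*$ small. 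The contraction factor should be a positive power of $T$.

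\textbf{Step 1: the $u'$ component.} By Lemma \ref{le;est;paraproduct;modifed} with $\mathcal LQ=Y_x\in W^{\kappa-2}$ (so $\alpha=\kappa-2$) and $\rho$ close to $1$, we get $\Vert u'\pprec\ml Q\Vert_{\widetilde W^{\kappa-1}}\lesssim T^{\theta}\Vert u'\Vert\,\Vert\m X\Vert$. The estimates $\Vert Y\Vert_{\widetilde W^{\kappa-1}}$, $\Vert X^{\dag2}\Vert_{\widetilde W^{2\kappa-1}}$ are given, and $\widetilde H^{2\kappa-1}\hookrightarrow\widetilde W^{\kappa-1}$ by Sobolev embedding, since $2\kappa-1-\frac12>\kappa-1$. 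Hence $\tilde u'\in\widetilde W^{\kappa-1}(T)$, with difference estimates that are linear in the inputs. Strictly, this map is not small: $\tilde u'$ contains $u^\sharp$ with coefficient one. I would therefore iterate $\Gamma$ twice, or weight the norm, so that after composition every term carries a $T^\theta$.

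\textbf{Step 2: the heat estimate for $u^\sharp$.} By Lemma \ref{le;heat flow;time smoothing} in its $H$-version,
\[
\Vert\tilde u^\sharp\Vert_{\widetilde H^{2\kappa-1}}\lesssim \Vert u_0+Y(0)\Vert_{H^{2\kappa-1}}+T^{1-\delta/2}\Vert\Phi\Vert_{C_TH^{2\kappa-1-\delta}},
\]
so it suffices to show $\Phi\in C_TH^{\beta}$ for some $\beta>2\kappa-3$. The terms of $\Phi$ are handled as follows.
\begin{itemize}
\item In $\frac12\p_x(w+Y+X^{\dag2})^2-(w+Y+X^{\dag2})\prec\p_xY$, the subtraction removes the worst paraproduct. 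What is left is $Y_x\prec(\cdot)$, the resonant part $\circ$, and products of regular pieces. Each has regularity at least $2\kappa-3$, because $(\kappa-1)+(\kappa-2)>0$.
\item The $P(D)$ terms gain one derivative. Only $P(D)(w_xY_x)$ is delicate, and within it only the resonant part $w_x\circ Y_x$. I would split it as
\[
w_x\circ Y_x=u^\sharp_x\circ Y_x+\big[(u'\pprec\ml Q)_x-u'\pprec\ml Q_x\big]\circ Y_x+\big[u'\pprec\ml Q_x-u'\prec\ml Q_x\big]\circ Y_x+C(u',\ml Q_x,Y_x)+u'(\ml Q_x\circ Y_x).
\]
The first term lies in $H^{3\kappa-4}$. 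The two brackets are handled by Lemma \ref{le;commutater;D} and the $\pprec$ versus $\prec$ estimate of Appendix B, which uses the time regularity of $u'\in\widetilde W^{\kappa-1}$. The commutator $C$ is controlled by Lemma \ref{le;commutator;resonant} in $W^{3\kappa-4}$, which requires $3\kappa>4$. The last term uses the enhanced datum $\ml Q_x\circ Y_x\in W^{2\kappa-3}$ times $u'\in\ml C^{\kappa-1}$, which is well defined because $\kappa-1+2\kappa-3>0$. After applying $P(D)$, all of these land in $H^{\beta}$ with $\beta>2\kappa-3$.
\item The term $u'\prec\p_xY-\ml L(u'\pprec\ml Q)$ equals the commutator $[\ml L,u'\pprec]\ml Q$ plus $u'\prec Y_x-u'\pprec Y_x$. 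Both gain regularity by the standard commutator estimate $\Vert \ml L(f\pprec g)-f\pprec\ml Lg\Vert\lesssim\Vert f\Vert_{\widetilde W^{\gamma}}\Vert g\Vert$ from Appendix B, giving regularity $2\kappa-3+$.
\end{itemize}
All of these bounds are at most quadratic in $(u',u^\sharp)$ and $\m X$, and the differences are bilinear.

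\textbf{Main obstacle and closing the argument.} I expect the main obstacle to be the exponent bookkeeping for $\Phi\in H^{>2\kappa-3}$ uniformly across all these terms, especially the $L^\infty$ and $L^2$ mixing. For example, $u'\in\ml C^{\kappa-1}$ multiplies $H$-valued objects, and the commutators must be used in their $H$ versions. This bookkeeping is where the lower constraint $\kappa>\frac54-\eta$ should enter, presumably so that $s=2\kappa-1$ is compatible with the squared terms $(w_x+X^{\dag2}_x)^2$ in $H$. Once $\Phi$ is controlled, choosing $T^*$ small gives the self-map and contraction. Banach's theorem then yields existence and uniqueness in the ball, and a standard continuation argument removes the ball restriction for uniqueness. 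Finally, $u=-u'$ is recovered in $\widetilde W^{\kappa-1}(T^*)$.
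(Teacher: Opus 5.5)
Your overall scheme is the paper's. It is a fixed point in $(u^\sharp,u')$ with $u'=-(Y+X^{\dag2}+w)$, the Schauder bound of Lemma \ref{le;heat flow;time smoothing} for $u^\sharp$, exactly the splitting \eqref{estimate;commutator} of $(u'\pprec\mathcal Q)_x\circ Y_x$, and a weighted norm (the paper's $2\|u^\sharp\|_{\widetilde H^{2\kappa-1}}+\|u'\|_{\widetilde W^{\kappa-1}}$ in Lemma \ref{le;Picard th;specific}) to absorb the coefficient-one dependence of $u'$ on $u^\sharp$. The gap is in the exponents, and it is not cosmetic. You measure the data at face value, $Y\in W^{\kappa-1}$ and $\mathcal Q_x\circ Y_x\in W^{2\kappa-3}$. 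Then $u^\sharp_x\circ Y_x$, $C(u',\mathcal Q_x,Y_x)$ and $u'(\mathcal Q_x\circ Y_x)$ all require $3\kappa>4$, as you note yourself. So does $X^{\dag2}_xY_x$, which you count among the harmless terms. The hypothesis does not give this. For $\eta>-\frac1{12}$ one has $\frac54-\eta<\frac43<\frac32+2\eta$, so $(\frac54-\eta,\frac43]$ is a nonempty part of the claimed range; for $\eta=0$ it is all of $(\frac54,\frac43]$. There $(2\kappa-2)+(\kappa-2)\le 0$, so $u^\sharp_x\circ Y_x$ is not even defined in your scheme. Simply raising $\kappa$ does not fix this, because $u_0$ is only assumed in $H^{2\kappa-1}$ and uniqueness is asserted in the $\kappa$-class.

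\emph{The missing idea:} measure the solution at level $\kappa$ but the data at their true regularity. Since $\kappa<2\eta+\frac32$ strictly, for every $\varepsilon<2\eta+\frac32-\kappa$ one has $Y\in\widetilde W^{\kappa-1+\varepsilon}$, $X^{\dag2}\in\widetilde W^{2\kappa-1+2\varepsilon}$ and $\mathcal Q_x\circ Y_x\in W^{2\kappa-3+2\varepsilon}$. This is what the paper's norms $\|\mathbb X\|_{X^{\kappa+\varepsilon}}$, with its $\varepsilon_1,\varepsilon_2,\varepsilon_3$, encode. The resulting conditions are:
\begin{itemize}
\item $u^\sharp_x\circ Y_x$ needs $3\kappa-4+\varepsilon>0$, which is achievable iff $\kappa>\frac54-\eta$. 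This is where the lower bound on $\kappa$ comes from, not the squared terms $(w_x+X^{\dag2}_x)^2$, which only need $\kappa>1$.
\item The commutator and $u'(\mathcal Q_x\circ Y_x)$ need $3\kappa-4+2\varepsilon>0$, i.e.\ $\kappa>1-4\eta$, which is weaker.
\item $X^{\dag2}_xY_x$ needs $3\kappa-4+3\varepsilon>0$, i.e.\ $\eta>-\frac1{12}$.
\end{itemize}
The same surplus supplies the strict gain $\Phi\in H^{>2\kappa-3}$ that your Step 2 requires; at face value $Y_x\prec Y$ lies only in $H^{2\kappa-3}$.

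There are two smaller slips. First, $(\kappa-1)+(\kappa-2)>0$ is false here since $\kappa<\frac32$, so $Y\circ Y_x$ is undefined. You must keep the derivative outside, writing $\frac12\partial_xV^2-V\prec Y_x=V_x\prec V+V\prec(w+X^{\dag2})_x+\frac12\partial_x(V\circ V)$ with $V=w+Y+X^{\dag2}$; here $V\circ V$ only needs $2\kappa-2>0$. Second, the initial datum of $u^\sharp$ must be $u_0$, not $u_0+Y(0)$, because $Y(0)\notin H^{2\kappa-1}$. The $Y(0)$ part of $u(0)$ is carried by $Y$ itself.
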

\begin{proof}
    By Lemmas \ref{X_bdd} and \ref{le;high order}, we have $Y\in \widetilde{W}^{\kappa-1}(1)$ and $X^{\dag 2}\in\widetilde{W}^{2\kappa-1}(1)$. Hence, it suffices to prove that there exist a positive $T^*\leq 1$ and $(u^\sharp,u')\in \widetilde{H}^{2\kappa-1}(T^*)\times\widetilde{W}^{\kappa-1}(T^*)$ such that equation \eqref{w} has a solution $w$ satisfying $w=u'\pprec\ml Q+u^\sharp\triangleq f(u^\sharp,u')$. Note that if $(u^\sharp,u')$ satisfies
    \begin{equation}\label{eq;u',u^sharp}
        \left\{
        \begin{aligned}
            &u^\sharp=a+h(u^\sharp)+g(u')+\ml G(f(u^\sharp,u'))+\ml B(f(u^\sharp,u'),f(u^\sharp,u')),\\
            &u'=-(Y+X^{\dag 2})-f(u^\sharp,u')
        \end{aligned}
        \right.
    \end{equation}
    with $u^\sharp(0)=u_0+Y(0),\ u'=-u_0$, where
    \begin{equation*}
        \left\{
        \begin{aligned}
            &\ml La =[(Y+X^{\dag 2})\prec  Y_x-\frac{1}{2}\p_x(Y+X^{\dag 2})^2]+P(D)[(Y+X^{\dag 2})^2+\frac{1}{2}(X^{\dag 2}_x)^2+Y_x X^{\dag 2}_x],\\
            &\ml Lh(u^\sharp)=P(D)(u^\sharp_x\circ Y_x),\\
            &\ml Lg(u')=[u'\pprec Y_x-\ml L(u'\pprec\ml Q)]+(u'\prec Y_x-u'\pprec Y_x)+P(D)[(u'\pprec\ml Q)_x\circ Y_x],\\
            &\ml L\ml G(f(u^\sharp,u'))=-[w(Y+X^{\dag 2})]_x+w\prec Y_x\\
            &~~~~~~~~~~~~~~~~~~~~~+P(D)[2w(Y+X^{\dag 2})+w_x X^{\dag 2}_x+w_x Y_x-w_x\circ Y_x],\\
            &\ml L\ml B(f(u^\sharp,u'),f(u^\sharp,u'))=-ww_x+P(D)(w^2+\frac{1}{2}w_x^2)
        \end{aligned}
        \right.
    \end{equation*}
    with $a(0)=u_0$ and $h(u^\sharp)(0)=g(u')(0)=\ml G(f(u^\sharp,u'))(0)=\ml B(f(u^\sharp,u'),f(u^\sharp,u'))(0)=0$, 
    then $w=u'\pprec\ml Q+u^\sharp$ solves \eqref{w}. In what follows, we prove that there exist a positive $T^*\leq 1$ and a unique $(u^\sharp,u')\in \widetilde{H}^{2\kappa-1}(T^*)\times\widetilde{W}^{\kappa-1}(T^*)$ satisfying \eqref{eq;u',u^sharp}.
    
    Defining 
    \begin{align*}
		\ml E^\kappa_T:=\{(u,v)\in \widetilde{H}^{2\kappa-1}(T)\times \widetilde{W}^{\kappa-1}(T): 2\Vert u\Vert_{\widetilde{H}^{2\kappa-1}(T)}+\Vert v\Vert_{\widetilde{W}^{\kappa-1}(T)}<\infty\},
    \end{align*}
    we aim to show that there is a positive $T^*\leq 1$ such that the conditions of Lemma \ref{le;Picard th;specific} hold when we take $\dot E=\widetilde{H}^{2\kappa-1}(T^*),\ X=\widetilde{H}^{\kappa}(T^*),\ E=\widetilde{W}^{\kappa-1}(T^*)$ and $\ml E=\ml E_{T^*}$ therein. Assume that $T\in(0,1]$. Taking $\rho=2-\frac{2\eta+\frac{1}{2}-(\kappa-1)}{2}\in(0,2)$, we deduce from Lemma \ref{le;est;paraproduct;modifed} that 
    \begin{align*}
	\Vert f(u^\sharp,u')\Vert_{\widetilde{H}^{\kappa}(T)}\leq& 
        \Vert u^\sharp\Vert_{\widetilde{H}^{\kappa}(T)}+C  T^{1-\frac{\rho}{2}}\Vert u'\Vert_{\widetilde{W}^{\kappa-1}(T)}\left(\Vert \ml Q\Vert_{\widetilde{H}^{3-\rho}(T)}+\Vert \ml{LQ}\Vert_{\widetilde{H}^{\kappa-\rho}(T)}\right)\\
	\leq& \Vert u^\sharp\Vert_{\widetilde{H}^{2\kappa-1}(T)}+C  T^{1-\frac{\rho}{2}}\Vert u'\Vert_{\widetilde{W}^{\kappa-1}(T)}\Vert Y \Vert_{\widetilde{H}^{\kappa+1-\rho}(T)}.
    \end{align*}
    Taking $T\in(0,1]$ small enough, we can ensure that $C T^{1-\frac{\rho}{2}}\Vert Y \Vert_{\widetilde{H}^{\kappa+1-\rho}(T)}<\frac{1}{30}$. 

    For $\eta\in(-\frac{1}{12},0],\ \kappa\in(\frac{5}{4}-\eta, \frac{3}{2}+2\eta),\ \varepsilon_1\in(\max(0,\frac{8}{3}-2\kappa),4\eta+3-2\kappa)$, it follows from Lemmas \ref{le;heat flow;time smoothing} and \ref{le;commutater;D} that 
    \begin{align*}
        &\Vert (a,-(Y+X^{\dag 2}))\Vert_{\ml E_{T}}\\
        \leq& C\Vert u_0\Vert_{H^{2\kappa-1}}+C T^{\frac{\varepsilon_1}{2}}\Vert \ml La\Vert_{ C_T H^{2\kappa-3+\varepsilon_1}}+\Vert Y+X^{\dag 2}\Vert_{\widetilde{W}^{\kappa-1}(T)}\\
        \leq&C T^{\frac{\varepsilon_1}{2}}\Vert \m X\Vert_{X^{\kappa+\frac{\varepsilon_1}{2}}(T)}^2
        +C\Vert u_0\Vert_{H^{2\kappa-1}}+\Vert \m X\Vert_{X^{\kappa}(T)}.
    \end{align*}
    Setting $r=16\max(1,C)\cdot\max(\Vert u_0\Vert_{H^{2\kappa-1}},\Vert \m X\Vert_{X^{\kappa}(1)})$, we can choose $T\in(0,1]$ small enough such that $\Vert (a,-(Y+X^{\dag 2}))\Vert_{\ml E_{T}}\leq\frac{r}{4}$.

    Note that $\kappa\in(-\eta+\frac{5}{4},2\eta+\frac{3}{2})\subset(-4\eta+1,2\eta+\frac{3}{2})$. For $\varepsilon_2\in(\max(0,\frac{4-3\kappa}{2}),2\eta+\frac{3}{2}-\kappa)$, we deduce from Lemmas \ref{le;heat flow;time smoothing}, \ref{le;high order}, \ref{le;commutator;resonant} and \ref{le;commutator;modfied paraproduct, L} that 
    \begin{align*}
        \Vert g(u')\Vert_{\widetilde{H}^{2\kappa-1}(T)}
        \leq& C T^{\frac{\varepsilon_2}{2}}\Vert \ml Lg(u')\Vert_{ C_T H^{2\kappa-3+\varepsilon_2}}\\
        \leq&C T^{\frac{\varepsilon_2}{2}}\left(\Vert u'\Vert_{\widetilde{W}^{\kappa-1}(T)}\left(\Vert\ml Q\Vert_{ C_T H^{\kappa+\varepsilon_2}}+\Vert Y_x\Vert_{ C_T H^{\kappa-2+\varepsilon_2}}\right)\right)\\
        &+C T^{\frac{\varepsilon_2}{2}}\Vert (u'\pprec\ml Q)_x\circ Y_x\Vert_{ C_T H^{2\kappa-4+\varepsilon_2}},
    \end{align*}
    where
    \begin{align}
        &\Vert(u'\pprec\ml Q)_x\circ Y_x\Vert_{ C_T H^{2\kappa-4+\varepsilon_2}}\notag\\
        \leq&\Vert[(u'\pprec\ml Q)_x-u'\pprec\ml Q_x]\circ Y_x\Vert_{ C_T H^{2\kappa-4+\varepsilon_2}}+\Vert [u'\pprec\ml Q_x-u'\prec\ml Q_x]\circ Y_x\Vert_{ C_T H^{2\kappa-4+\varepsilon_2}}\notag\\
        &+\Vert (u'\prec\ml Q_x)\circ Y_x-u'(\ml Q_x\circ Y_x)\Vert_{ C_T H^{2\kappa-4+\varepsilon_2}}+\Vert u'(\ml Q_x\circ Y_x)\Vert_{ C_T H^{2\kappa-4+\varepsilon_2}}\notag\\
        \leq&C\Vert u'\Vert_{\widetilde{W}^{\kappa-1}(T)}\left(\Vert\ml Q\Vert_{ C_T H^{\kappa+\varepsilon_2}}\Vert Y_x\Vert_{ C_T W^{\kappa-2+\varepsilon_2}}+\Vert \ml Q_x\circ Y_x\Vert_{ C_T W^{2\kappa-3+2\varepsilon_2}}\right).\label{estimate;commutator}
    \end{align}
    Thus, we obtain $\Vert g(u')\Vert_{\widetilde{H}^{2\kappa-1}(T)}\leq C T^{\frac{\varepsilon_2}{2}}\Vert \m X\Vert_{X^{\kappa+\varepsilon_2}(1)}\Vert u'\Vert_{\widetilde{W}^{\kappa-1}(T)}$. Taking $T\in(0,1]$ small enough, we can ensure that $\Vert g(u')\Vert_{\widetilde{H}^{2\kappa-1}(T)}\leq\frac{1}{16}\Vert u'\Vert_{\widetilde{W}^{\kappa-1}(T)}$.

    Similarly, for $\varepsilon_3\in(\max(0,4-3\kappa),2\eta+\frac{3}{2}-\kappa)$, we can choose $T\in(0,1]$ small enough such that
    \begin{align*}
        \Vert h(u^\sharp)\Vert_{\widetilde{H}^{2\kappa-1}(T)}
        \leq& C T^{\frac{\varepsilon_3}{2}}\Vert \ml Lh(u^\sharp)\Vert_{ C_T H^{2\kappa-3+\varepsilon_3}}
        \leq C T^{\frac{\varepsilon_3}{2}}\Vert u^\sharp_x\Vert_{\widetilde{H}^{2\kappa-2}(T)}\Vert Y_x\Vert_{ C_T W^{\kappa-2+\varepsilon_3}}\\
        \leq&C T^{\frac{\varepsilon_3}{2}}\Vert u^\sharp\Vert_{\widetilde{H}^{2\kappa-1}(T)}\Vert \m X\Vert_{X^{\kappa+\varepsilon_3}(1)}\leq\frac{1}{16}\Vert u^\sharp\Vert_{\widetilde{H}^{2\kappa-1}(T)},\\
        \Vert \ml G(f(u^\sharp,u'))\Vert_{\widetilde{H}^{2\kappa-1}(T)}
        \leq& C T^{\frac{\varepsilon_3}{2}}\Vert \ml L\ml G(f(u^\sharp,u'))\Vert_{ C_T H^{2\kappa-3+\varepsilon_3}}\\
        \leq& C T^{\frac{\varepsilon_3}{2}}\Vert f(u^\sharp,u')\Vert_{\widetilde{H}^{\kappa}(T)}\Vert \m X\Vert_{X^{\kappa+\varepsilon_3}(1)}\\
        \leq&\frac{1}{16}\Vert f(u^\sharp,u')\Vert_{\widetilde{H}^{\kappa}(T)}
    \end{align*}
    and
    \begin{align*}
        \Vert \ml B(f(u^\sharp,u'),f(u^\sharp,u'))\Vert_{\widetilde{H}^{2\kappa-1}(T)}
        \leq& C T^{\frac{\varepsilon_3}{2}}\Vert \ml L\ml B(f(u^\sharp,u'),f(u^\sharp,u'))\Vert_{ C_T H^{2\kappa-3+\varepsilon_3}}
        \\
        \leq&C T^{\frac{\varepsilon_3}{2}}\Vert f(u^\sharp,u')\Vert_{\widetilde{H}^{\kappa}(T)}^2\leq\frac{1}{16r}\Vert f(u^\sharp,u')\Vert_{\widetilde{H}^{\kappa}(T)}^2.
    \end{align*}
    
    Taking $T^*\leq T$, we deduce from Lemma \ref{le;Picard th;specific} that there exists a unique $(u^\sharp,u')\in \widetilde{H}^{2\kappa-1}(T^*)\times\widetilde{W}^{\kappa-1}(T^*)$ satisfying \eqref{eq;u',u^sharp}.
\end{proof}
\subsection{Continuous dependence}
In this subsection, we give the continuously dependent with $(\m X,u_0)$. For $\eta\in(0,\infty)$, since the pair $(u,v)$ satisfies the structure
\begin{equation}
    u=\sum_{\tau\in\ml T'}c(\tau)X^\tau+v,\quad v=a+L(v)+\ml B(v,v),
\end{equation}
it can be treated by the same argument as in the proof of Theorem 3.1 in \cite{zhang2026generalizedframeworksingularfractional}. We introduce the following proposition without proof.
\begin{proposition}
    Let $\eta\in(0,\infty)$ and $u_0\in H^s$. Suppose that $\kappa$, $s$, 
    and $u_0$ satisfy the assumptions of Theorem~\ref{th;main results}. 
    Let $\bar T$ be the maximal existence time, then $\bar T$ is independent of the choice of $s$ and is characterized by
    \begin{equation}
        \bar T
        =
        \sup\left\{
            t>0:\|v\|_{C_tH^s}<\infty
        \right\}.
    \end{equation}

    Moreover, let $u^\sigma$ be the solution to the regularized equation
    \begin{equation}\label{eq:regularized-solution}
        \begin{cases}
        \partial_t u^\sigma
        +u^\sigma\partial_xu^\sigma
        -\Delta u^\sigma
        =
        P(D)\left(
            (u^\sigma)^2+\dfrac12(\partial_xu^\sigma)^2
        \right)
        +(1-\partial_x^2)^{-\eta}\xi_\sigma,\\[1mm]
        u^\sigma(0)=\phi_\sigma*u_0.
        \end{cases}
    \end{equation}
    Then there exists a family of times $T_\sigma>0$ such that
    \[
        T_\sigma\longrightarrow\bar T
        \qquad\text{as }\sigma\to0,
    \]
    and
    \begin{equation}
        \lim_{\sigma\to0}
        \|u^\sigma-u\|_
        {C([0,T_\sigma\wedge\bar T);W^{\kappa-1})}
        =0.
    \end{equation}
\end{proposition}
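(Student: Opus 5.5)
The proof rests on the fact that, for every $\eta>0$, the solution map factors as $u=\Phi(u_0,\m X)$. Here $\m X$ is the enhanced data ($X$; or $X$ together with $\p_x(X_x\circ X_x)$; or $\{Y,X^{\dag2}\}$), and $\Phi$ is built from the mild formulation $v=a+L(v)+\ml B(v,v)$ of Propositions~\ref{local_thm}, \ref{local_thm;rough} and \ref{local_thm;high}. The plan has three steps: show that $\Phi$ is locally Lipschitz, identify the regularized solution $u^\sigma$ with $\Phi(\phi_\sigma*u_0,\m X_\sigma)$, and then pass to the limit using the stochastic convergence lemmas of Section~\ref{sec;enhanced data}.

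\textbf{Step 1 (Lipschitz dependence on $[0,T^*]$).} Take two sets of data $(u_0,\m X)$ and $(\bar u_0,\bar{\m X})$, and let $v,\bar v$ be the fixed points. Subtracting the equations gives
$$v-\bar v=(a-\bar a)+L(v-\bar v)+(L-\bar L)(\bar v)+\ml B(v-\bar v,v)+\ml B(\bar v,v-\bar v).$$
Each term is bilinear, so Lemma~\ref{estimate_B} applies verbatim. Choosing $T^*$ exactly as in the existence proofs, which gives the contraction bounds \eqref{B(v,X)}--\eqref{B(v,v)}, we absorb the terms linear in $v-\bar v$ and obtain
$$\|v-\bar v\|_{C_{T^*}H^s}\lesssim \|u_0-\bar u_0\|_{H^s}+d(\m X,\bar{\m X}).$$
The constant depends only on $r$ and on the norms of the data. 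Since $u=\sum c(\tau)X^\tau+v$ and $H^s\hookrightarrow W^{s-1/2}\subset W^{\kappa-1}$ in the stated ranges of $s$, the same bound holds for $u$ in $C_{T^*}W^{\kappa-1}$.

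\textbf{Step 2 (maximal time and independence of $s$).} Because $T^*$ depends only on $\|v(t_0)\|_{H^s}$ and on the data norms on $[0,1]$, we can restart the argument at any time $t_0$ while the $H^s$-norm stays bounded. This yields the blow-up characterization of $\bar T$. To see that $\bar T$ does not depend on $s$, take $s_1<s_2$. The $H^{s_2}$ solution is also the $H^{s_1}$ solution, by uniqueness. Conversely, parabolic smoothing (Lemma~\ref{lemma;B1} with $\rho>0$) shows that $v(t)\in H^{s_2}$ for $t>0$, with bounds controlled by $\|v\|_{C_tH^{s_1}}$. Hence the two blow-up times coincide.

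\textbf{Step 3 (approximation).} For $\sigma>0$ the noise is smooth, so $u^\sigma$ is a classical solution. The products $X_\sigma\circ X_\sigma$ and the other products are then genuine, so $u^\sigma=\Phi(\phi_\sigma*u_0,\m X_\sigma)$. For $\eta\le 1/8$ one must check that the $\Pi_0$-mode and the stationary initial layer $Y_\sigma(0)$ are consistent with the prescribed initial data; this is a bookkeeping matter. By Lemmas~\ref{X_bdd}, \ref{le;resonant} and the $X^{\dag2}$ lemma, $\m X_\sigma\to\m X$ in the relevant norms (a.s. or in $L^2(\Omega)$, hence in probability), and $\phi_\sigma*u_0\to u_0$ in $H^s$.

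Fix $T'<\bar T$ and set $M=\sup_{[0,T']}\|v\|_{H^s}$. Cover $[0,T']$ by finitely many intervals of length $T^*(2M+1,\cdot)$ and iterate Step 1. On the event that $\m X_\sigma$ is close to $\m X$, $v^\sigma$ stays within distance $1$ of $v$ and exists on $[0,T']$. Defining $T_\sigma$ as the first time at which $\|v^\sigma-v\|_{H^s}$ reaches $1$ (truncated at $\bar T$), we get $T_\sigma\to\bar T$ and $\|u^\sigma-u\|_{C_{T_\sigma\wedge\bar T}W^{\kappa-1}}\to0$.

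\textbf{Main obstacle.} The delicate point is Step 3, namely making this patching uniform in $\sigma$ while the data only converge in probability. I expect to handle it by passing to an a.s. convergent subsequence and applying the deterministic argument pathwise, then using the subsequence criterion to recover convergence in probability.
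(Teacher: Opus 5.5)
Your overall strategy matches the one the paper relies on. It has three ingredients: Lipschitz stability of the fixed point (the second half of Lemma~\ref{le;continuous dependence}), restarting with a local time that depends only on a norm bound and iterating over a fixed number of steps, and convergence of the enhanced data. The paper defers to Theorem~3.1 of the cited work for this statement, and the same template is written out in the proof of Proposition~\ref{prop;continuous dependence}.

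The last step, however, fails as written. Suppose $T_\sigma$ is the first time $\|v^\sigma-v\|_{H^s}$ reaches $1$. Whenever $T_\sigma<\bar T$, continuity gives $\sup_{[0,T_\sigma)}\|v^\sigma-v\|_{H^s}=1$. When $T_\sigma=\bar T$, nothing controls the error near $\bar T$, where $v$ blows up. So this stopping rule keeps the error below $1$ but does not send it to $0$. What your iteration actually proves is that $\|v^\sigma-v\|_{C_{T'}H^s}\to0$ for each fixed $T'<\bar T$, because the error accumulates over only a fixed number $K(T')$ of steps. You then need a diagonal choice:
\begin{itemize}
\item take $T'_m\uparrow\bar T$ and $\sigma_m\downarrow0$ such that the error on $[0,T'_m]$ is below $1/m$ for $\sigma<\sigma_m$;
\item set $T_\sigma=T'_m$ for $\sigma\in[\sigma_{m+1},\sigma_m)$.
\end{itemize}
Equivalently, let the hitting level tend to $0$ slowly with $\sigma$. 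This is exactly the paper's closing step (``since $T<\bar T$ is arbitrary, we may choose $T_n\to\bar T$ \dots'').

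Second, the transfer from $v$ to $u$ uses $H^s\hookrightarrow W^{s-1/2}\subset W^{\kappa-1}$, which requires $s\ge\kappa-\frac12$. This fails inside the allowed ranges, for example $\eta>\frac14$ with $s=1<\kappa-\frac12$, or $\eta\in(\frac18,\frac14]$ with $\kappa>\frac74$ and $s=3-\kappa$. In those cases $u(0)=u_0$ need not lie in $W^{\kappa-1}$, so $\|u^\sigma(0)-u(0)\|_{W^{\kappa-1}}$ can be infinite. No argument can then give the stated $W^{\kappa-1}$ convergence from $t=0$. This is a defect of the statement itself, which the paper also glosses over. What your argument honestly yields is convergence in $W^{\min(\kappa-1,\,s-1/2)}$, which equals $W^{\kappa-1}$ only when $s\ge\kappa-\frac12$.

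Three smaller points:
\begin{itemize}
\item $\xi_\sigma$ is mollified only in space and remains white in time, so $u^\sigma$ is not a classical solution. The identification $u^\sigma=\Phi(\phi_\sigma*u_0,\m X_\sigma)$ should instead go through the Da~Prato--Debussche decomposition with spatially smooth $X_\sigma$.
\item No renormalization is needed in that identification, because $\p_x$, $P(D)$ and $\Pi_0^\perp$ remove the divergent constants.
\item For $\eta\le\frac18$, your ``bookkeeping'' concretely means taking $u^\sigma(0)=\phi_\sigma*u_0+Y_\sigma(0)$, as in Theorem~\ref{th;main results}, and accounting for the zero mode dropped from $Y_\sigma$.
\end{itemize}
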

 
 \par Now we try to show the paracontrolled solution $(u_n',u^\sharp_n)$ driven by smooth data $(\m X^n,u_0^n)$ is converge to $(u',u^\sharp)$ in space $\ml E_T$ until blow up time. Assume that $(u,u',u^\sharp)$ is the solution of \eqref{eq;paracontrolled equation} driven by $(u_0,\m X)$,
 \begin{proposition}\label{prop;continuous dependence}
The maximal time of existence $\bar T$ of such a solution in Proposition  \ref{prop;paracontrolled well-posedness} does not depend on $s$ and satisfies
		\begin{equation}
			\bar T=\sup\{t>0; \Vert (u^\sharp,u')\Vert_{\ml E_t^\kappa}<\infty\}.
		\end{equation}
    
  	Moreover, assume that  $\Vert u_0^n-u_0\Vert_{W^s}\to0$ and  $\Vert \m X^n-\m X\Vert _{X_T^{\kappa}}\to0$ for any $T>0$,  and $(u,u',u^\sharp)$ be the solution of \eqref{eq;paracontrolled equation}  driven by $(\m X,u_0)$ on $[0,\bar T)$, 
    then there exists $T_n$ satisfying $\lim_{n\to\infty} T_n=\bar T$ and for any $n\in \m N$, we have $(u^n,u^{n,}{'},u^{n,\sharp})$ is the solution of \eqref{eq;paracontrolled equation} driven by $(\m X^n,u^n_0)$ on $[0,T_n]$. Moreover, we have $\Vert u^n-u\Vert_{C([0,T_n\wedge \bar T);W^{\kappa-1})}\to0$ as $n\to\infty$. 
 \begin{proof}
     We firstly prove the solution exists until blow up time. Assume that we already have the solution $(u,u',u^\sharp)$ on $[0,h]$ for some $h>0$, and consider the time interval $[h,h+1]$. The initial condition has enough regularity by following computation
     \begin{equation}\label{eq;endpoint}
         \begin{aligned}
			\tilde u^\sharp(0)=&\tilde u(0)-\sum_{\tau\in\ml T'}c(\tau) \tilde X^\tau-\tilde u'\pprec \tilde {\ml Q}(0)\\
			=&u(h)-\sum_{\tau\in\ml T'}c(\tau) X^\tau(h)-u'(h)\prec \ml Q(h)
		\end{aligned}
     \end{equation}
		Note that $u'\pprec \ml Q(h)-u'\prec\ml Q(h)\in H^{2\kappa-1}$, we have $\tilde u^\sharp(0)\in H^{2\kappa-1}$. As a consequence, let $\tilde M=16\max(1,C)\cdot \max(\Vert \tilde u^\sharp(0)\Vert_{H^{2\kappa-1}},\Vert \m X\Vert_{X^\kappa_{[h,h+1]}})$, then we obtain unique para-controlled solution $\tilde u$ on $[0,h(\tilde M)]$. We then extend $u$ and $u'$ from $[0,h]$ to $[0,h+h(\tilde M)]$ by setting $u(t)=\tilde u(t-h)$ for $t\geq h$. Then by definition $u'\in \widetilde W^{\kappa-1}(h+h(\tilde M))$. For the remainder term, note that $u^{\ml Q}-u'\pprec_h\ml Q\in \widetilde H^{2\kappa-1}$ where
		\begin{align*}
			u'\pprec_h \ml Q(t)=\sum_{j}\int_{-\infty}^t 2^{2 j}\varphi(2^{2 j}(t-s))S_{j-1}u'(s\vee h)\mathrm{d}s\Delta_j \ml Q(t).
		\end{align*}
		It suffice to show that $(u'\pprec_h\ml Q-u'\pprec \ml Q)|_{[h,h+h(\tilde M)]}\in \widetilde H^{2\kappa-1}([h,h+h(\tilde M)])$.
        Noting that $(u'\pprec_h\ml Q-u'\pprec\ml Q)(h)\in H^{2\kappa-1}$ and for $t\in [h,h+h(\tilde M)]$ we get
        \begin{align*}
            \ml L(u'\pprec _h \ml Q-u'\pprec \ml Q)(t)=(u'\pprec_h \p_x X^{\tau^*}-u'\pprec \p_x X^{\tau^*})(t)+H^{2\kappa-3}
        \end{align*}
        By Lemma B.3 in \cite{zhang2026wellposednessregularityfractionalrough}, we have that 
$u' \pprec \partial_x X^{\tau} - u' \prec \partial_x X^{\tau} 
\in C_{h+h(\tilde M)} H^{2\kappa-3}$. 
By the same arguments it follows that also 
$(u' \prec_h \partial_x X^{\tau} - u' \prec \partial_x X^{\tau})|_{[h,h+h(\tilde M)]} 
\in C_{[h,h+h(\tilde M)]} H^{2\kappa-3}$; 
by Lemma \ref{le;heat flow;time smoothing}, we obtain 
$\tilde u^\sharp \in {\widetilde{ H}}^{2\kappa-1}([h,h+h(\tilde M)])$.
\par Now we prove that the continuous dependence with $(\m X,u_0)$ until blow up time. Let $T<\bar T$, we have $\Vert (u^\sharp,u')\Vert_{\ml E^\kappa_T}<\infty$.  Assume that $u_0^{n}\to u_0$ in $W^s$, and $\m X^{n}\to \m X$ in $X_T^\kappa$. More precisely, let $\epsilon_n,\delta_n\to0$ be such that \begin{align*}     \Vert u_0^{n}-u_0\Vert_{W^s}\leq\epsilon_n,\quad     \Vert \m X^{n}-\m X\Vert_{X^\kappa_T}\leq\delta_n. \end{align*}
 Let $M=\max(1,C)\cdot\max\{\Vert (u^\sharp,u')\Vert_{\ml E^\kappa_T},\Vert \m X\Vert_{X_T^\kappa}\}$. And $r=16(M+1)$, then  Theorem \ref{local_thm},  for any $n\in \m N$, there exits a positive $T=T(M)$ and a unique triple $(u^{n},u^{n,}{'},u^{n,\sharp})\in \widetilde W^{\kappa-1}(T)\times\widetilde W^{\kappa-1}(T)\times \widetilde H^{2\kappa-1}(T)$ satisfying the paracontrolled equation \eqref{eq;paracontrolled equation} driven by $(u_0^{n},\m X^{n})$. Consider $u^{n,}{'}(T(M))$ and $u^{n,\sharp}(T(M))$, noting that
     \begin{equation}\label{eq;u',u^sharp;app}
        \left\{
        \begin{aligned}
            &u^{n,\sharp}
            =a_n+h_n(u^{n,\sharp})+g_n(u^{n,'})
            +\ml G_n(f_n(u^{n,\sharp},u^{n,'}))
            +\ml B(f_n(u^{n,\sharp},u^{n,'}),
            f_n(u^{n,\sharp},u^{n,'})),\\
            &u^{n,'}
            =-(X_{n}+X_{n}^{\dag 2})
            -f_n(u^{n,\sharp},u^{n,'}).
        \end{aligned}
        \right.
\end{equation}
 by Lemma \ref{le;continuous dependence}, there exists $N\in\m N$ such  that for any $n\geq N$, we have
\begin{equation}\label{eq;k=0;2}
     \Vert (u^\sharp,u')- (u^{n,\sharp},u^{n,}{'})\Vert_{\ml E_{T(M)}^\kappa}
 \leq C(1+M+M^2)\delta_n \big( 1+\Vert \m X\Vert_{X_T^\kappa} +\Vert \m X\Vert_{X_T^\kappa}^2 \big).+\delta_n+\epsilon_n
\end{equation}
Calculating by $\Vert (u^{n,\sharp},u^{n,}{'})\Vert_{\ml E_{T(M)}^\kappa}\leq \Vert (u^\sharp,u')\Vert_{\ml E_{T(M)}^\kappa}+\Vert (u^{n,\sharp},u^{n,}{'})-(u^\sharp,u')\Vert_{\ml E_{T(M)}^\kappa}$, we have
\begin{equation}\label{eq;k=0}
    \Vert (u^{n,\sharp},u^{n,}{'})\Vert_{\ml E_{T(M)}^\kappa}\leq M+(1+C(M))\delta_n+\epsilon_n
\end{equation}
Let $K=[T/T(M)]+1$ and define
\begin{equation}\label{eq;condition,iteration}
    \epsilon_{n,0}=\epsilon_n,\qquad
    \epsilon_{n,k+1}
    =\epsilon_{n,k}+(1+C(M))\delta_n,
    \quad k=0,\ldots,K-1.
\end{equation} Define $(u^{n,k},u^{n,k,}{'},u^{n,k,\sharp})$ be the solution of \eqref{eq;paracontrolled equation} driven by $(\m X^{n,k},u^{n,k-1,\sharp}(T(M)))$ on $[0,T(M)]$, where $u^{n,0,\sharp}=u_0$  and and
\begin{align*}
     \m X^{n,k}
     =\{&Y_{n}(\cdot+kT(M)),
     X^{\dag 2}_{n}(\cdot+kT(M)),(\p_x\ml Q_{n}\circ\p_xY_{n})
     (\cdot+kT(M))\}.
\end{align*}
Particularly, $(u^{n},u^{n,}{'},u^{n,\sharp})=(u^{n,0},u^{n,0,}{'},u^{n,0,\sharp})$ on $[0,T(M)]$. We use the induced argument to prove that for any $k$, we have
\begin{equation}\label{eq;induced claim}
\begin{aligned}
     &\Vert (u^{n,k,\sharp},u^{n,k,'})
     \Vert_{\ml E_{T(M)}^\kappa}\leq M+1,\\
     &\Vert (u^{n,k,\sharp},u^{n,k,'})
     -(u^\sharp(\cdot+kT(M)),u'(\cdot+kT(M)))
     \Vert_{\ml E_{T(M)}^\kappa}
     \leq\epsilon_{n,k}.
\end{aligned}
\end{equation}
 Indeed, the case $k=0$ follows from \eqref{eq;k=0} and \eqref{eq;k=0;2}. Assume \eqref{eq;induced claim}  is hold for $k=m$, then $\Vert u^{n,m+1,\sharp}(0)\Vert_{H^{2\kappa-1}}\leq M+1$, which ensure that $(u^{n,m+1},u^{n,m+1,}{'},u^{n,m+1,\sharp})$ is exists on $[0,T(M)]$. To estimate $\Vert u^{n,m+1,\sharp}(\cdot)-u^{\sharp}(\cdot+m(T(M)))\Vert_{\widetilde H^{2\kappa-1}(T(M))}$, we have
\begin{align*}
     \Vert u^{n,m+1,\sharp}(0)
     -u^{\sharp}((m+1)T(M))\Vert_{H^{2\kappa-1}}
     &\leq 
     \Vert u^{n,m,\sharp}(T(M))
     -u^{\sharp}((m+1)T(M))\Vert_{H^{2\kappa-1}}\\
     &\leq \epsilon_{n,m}.
\end{align*}
 Similar as the case of $k=0$, we have
\begin{align*}
      &\Vert (u^{n,m+1,\sharp},u^{n,m+1,'})
      -(u^\sharp(\cdot+(m+1)T(M)),u'(\cdot+(m+1)T(M)))
      \Vert_{\ml E_{T(M)}^\kappa}\\
      &\qquad\leq
      (1+C(M))\delta_n+\epsilon_{n,m}
      =\epsilon_{n,m+1}.
\end{align*}
then $ \Vert (u^{n,m+1,\sharp}(\cdot),u^{n,m+1,}{'}(\cdot))\Vert_{\ml E_{T(M)}^\kappa} \leq M+1$, and the induced claim is hold. Noting that, for
$t\in[kT(M),T\wedge(k+1)T(M)]$,
\[
    (u^{n},u^{n,'},u^{n,\sharp})(t)
    =
    (u^{n,k},u^{n,k,'},u^{n,k,\sharp})(t-kT(M)),
\]
we obtain a solution of \eqref{eq;paracontrolled equation} driven by
$(\m X^n,u_0^n)$ on $[0,T]$.  Since $K$ is fixed and $\epsilon_n,\delta_n\to0$, we have
\[
    \max_{0\leq k\leq K}\epsilon_{n,k}\to0.
\]
Therefore, for every $T<\bar T$,
\begin{equation}
     \Vert (u^\sharp,u')-
     (u^{n,\sharp},u^{n,'})
     \Vert_{\ml E_T^\kappa}
     \to0.
\end{equation}
By the paracontrolled decomposition and the convergence
$\m X^n\to\m X$ in $X_T^\kappa$, we further obtain
\[
    \Vert u^n-u\Vert_{C_TW^{\kappa-1}}\to0.
\]
Since $T<\bar T$ is arbitrary, we may choose the existence times
$T_n$ such that $T_n\to\bar T$ and
\[
    \Vert u^n-u
    \Vert_{C([0,T_n\wedge\bar T);W^{\kappa-1})}\to0.
\]
This completes the proof.
 \end{proof}
 \end{proposition}

 \begin{proof}[proof of Theorem \ref{th;main results}]
 	We prove Theorem \ref{th;main results} by combining the continuity of the solution map with the convergence of the enhanced stochastic process. It's the directly conclusion of Proposition \ref{prop;continuous dependence}.  Therefore, it remain to  consider the regime of $\beta\in(-1/12,0]$. 
    \par Let $\sigma_n$ be any sequence converge to 0, and define $\m X^{\sigma_n}=\{Y_{\sigma_n},X^{\dag 2}_{\sigma_n},\p_x\ml Q_{\sigma_n}\circ\p_xX_{\sigma_n}\}$, and $u_0^{\sigma_n}=\phi_{\sigma_n}\ast u_0$, by Proposition \ref{prop;continuous dependence}, $(u^{\sigma_n,}{'},u^{\sigma_n,\sharp})\to (u',u^\sharp)$ in $\ml E_T$ in probability sense as $n\to\infty$. Noting that $u^{\sigma_n}=Y_{\sigma_n}+X^{\dag2}_{\sigma_n}+u^{\sigma_n,}{'}\pprec\ml Q_{\sigma_n}+u^{\sigma_n,\sharp}$ is actually the solution of \eqref{eq;CH;diffusion;app} for $\sigma=\sigma_n$. Then we prove the Theorem \ref{th;main results} is hold for $\eta>-1/12$ and finish our proof.   
 \end{proof}
 \section{Global well-posedness}\label{sec;global well-posedness}
Throughout this section, we work with zero-mean functions on the torus, i.e. $\int_{\m T}u(x)\,\mathrm{d}x=0$,  For such functions, the homogeneous Sobolev norm $\|\cdot\|_{\dot H^s}$ and the inhomogeneous Sobolev norm $\|\cdot\|_{H^s}$
  are equivalent when $s\geq0$, a fact that will be used repeatedly in the sequel. In order to establish the global existence of solutions to the Cauchy problem \eqref{eq;CH;diffusion}, we begin by obtaining a priori energy estimates. 
 \subsection{Energy estimates}
We first present a lemma that will be useful in what follows. \begin{lemma}\label{le;v_estimates}\cite{bahouriFourierAnalysisNonlinear2011}
		Let $v$ be the solution in $\ml C([0,T];\ml S'(\mathbb{T}^d))$ of the Cauchy problem
		\begin{equation}\label{eq;heat equation;general}
			\left\{
			\begin{aligned}
				&\p_t v- \Delta v=f,\\
				&v|_{t=0}=v_0,\\		
			\end{aligned}
			\right.
		\end{equation}
		with $f\in L^2([0,T]; \dot H^{s-1}(\m T^d))$ and $v_0\in \dot H^s(\mathbb{T}^d)$. Then, 
		\begin{align*}
			v\in (\cap_{p=2}^\infty L^p([0,T]; \dot H^{s+\frac{2}{p}}(\mathbb{T}^d)))\cap \ml C([0,T]; \dot H^s(\mathbb{T}^d)).
		\end{align*}
		Moreover, we have the following estimates:
		\begin{align*}
			\Vert v(t)\Vert_{\dot H^s}^2+2\int_0^t\Vert \nabla v(t')\Vert_{\dot H^s}^2\mathrm{d}t'=\Vert v_0\Vert_{\dot H^s}^2+2\int_0^t\langle f(t'),v(t')\rangle_s\mathrm{d}t',\\
		\left(\sum_{k\neq 0}|
            k|^{2s}(\sup_{0\leq t'\leq t}\vert \hat v(t',k)\vert)^2\right)^\frac{1}{2} \leq \Vert v_0\Vert_{\dot H^s}+\frac{1}{\sqrt{2}}\Vert f\Vert_{L_T^2(\dot H^{s-1})},\\
            \Vert v(t)\Vert_{L_T^p(\dot   H^{s+\frac{2}{p}})}\leq \left( \Vert v_0\Vert_{\dot H^s}+\Vert f\Vert_{L_T^2(\dot H^{s-1})}\right)
		\end{align*}
		with $\langle a,b\rangle_s=\int\langle \xi\rangle^{2s}\hat a(\xi)\overline{\hat b(\xi)}\mathrm{d}\xi$.
    \end{lemma}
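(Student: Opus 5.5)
We work on the Fourier side. On $\mathbb T^d$ the equation decouples mode by mode: for each $k\neq0$,
\begin{equation*}
\partial_t\hat v(t,k)+|k|^2\hat v(t,k)=\hat f(t,k),\qquad \hat v(0,k)=\hat v_0(k),
\end{equation*}
so by Duhamel
\begin{equation*}
\hat v(t,k)=e^{-t|k|^2}\hat v_0(k)+\int_0^t e^{-(t-t')|k|^2}\hat f(t',k)\,\mathrm dt'.
\end{equation*}
All three estimates are obtained from this scalar ODE for each mode and then summed with weights $|k|^{2s}$.

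\textbf{Energy identity.} Multiply the mode equation by $\overline{\hat v(t,k)}$ and take twice the real part. This gives
\begin{equation*}
\frac{\mathrm d}{\mathrm dt}|\hat v(t,k)|^2+2|k|^2|\hat v(t,k)|^2=2\,\mathrm{Re}\,\hat f(t,k)\overline{\hat v(t,k)}.
\end{equation*}
Multiply by $|k|^{2s}$, integrate in time and sum over $k$; this is exactly the stated equality. Since $|k|^2|k|^{2s}=|k|^{2(s+1)}$, the middle term is $\|\nabla v\|_{\dot H^s}^2$. The bracket $\langle\cdot,\cdot\rangle_s$ is read as the homogeneous pairing $\sum_k|k|^{2s}\hat a\bar{\hat b}$, which is equivalent to the inhomogeneous one for zero-mean functions. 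The interchange of sum and integral is justified first for Fourier truncations $\Pi_N$, then by monotone convergence once the bounds below are known. Continuity $v\in\mathcal C([0,T];\dot H^s)$ follows from uniform convergence of the truncated series, which the sup-estimate provides.

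\textbf{Sup-in-time estimate.} For each $k$, Cauchy--Schwarz gives
\begin{equation*}
\sup_{t\le T}|\hat v(t,k)|\le|\hat v_0(k)|+\Big(\int_0^T e^{-2(t-t')|k|^2}\mathrm dt'\Big)^{1/2}\|\hat f(\cdot,k)\|_{L^2_T}\le|\hat v_0(k)|+\frac{1}{\sqrt2|k|}\|\hat f(\cdot,k)\|_{L^2_T}.
\end{equation*}
Multiply by $|k|^s$ and take the $\ell^2$ norm over $k\neq0$. Minkowski's inequality then yields $\|v_0\|_{\dot H^s}+\tfrac1{\sqrt2}\|f\|_{L^2_T\dot H^{s-1}}$, because $|k|^{2(s-1)}\|\hat f(\cdot,k)\|_{L^2_T}^2$ summed over $k$ is $\|f\|^2_{L^2_T\dot H^{s-1}}$.

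\textbf{$L^p_T$ estimate.} This is the main step, and the plan is to interpolate between two endpoints.
\begin{itemize}
\item $p=\infty$: the sup estimate gives $L^\infty_T\dot H^s$ control.
\item $p=2$: integrate the energy identity, bound $2\int\langle f,v\rangle_s\le\int\|f\|_{\dot H^{s-1}}\|\nabla v\|_{\dot H^s}$ and absorb by Young's inequality. This gives $\|v\|_{L^2_T\dot H^{s+1}}\le\|v_0\|_{\dot H^s}+\|f\|_{L^2_T\dot H^{s-1}}$.
\end{itemize}
For general $p$, Hölder interpolation in frequency gives $\|g\|_{\dot H^{s+2/p}}\le\|g\|_{\dot H^s}^{1-2/p}\|g\|_{\dot H^{s+1}}^{2/p}$. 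Then
\begin{equation*}
\int_0^T\|v\|_{\dot H^{s+2/p}}^p\,\mathrm dt\le\|v\|_{L^\infty_T\dot H^s}^{p-2}\,\|v\|_{L^2_T\dot H^{s+1}}^2.
\end{equation*}
Both factors are bounded by $\|v_0\|_{\dot H^s}+\|f\|_{L^2_T\dot H^{s-1}}$; for the first factor use the sup estimate, whose constant $1/\sqrt2$ is at most $1$. Taking the $p$-th root closes the estimate with constant $1$.

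The main point of care is keeping the constant exactly $1$ in the $p=2$ step. Young's inequality must be applied as $2ab\le a^2+b^2$ with the dissipative coefficient $2$, and one may need the slightly sharper mode-wise bound via Young's convolution inequality. That bound gives
\begin{equation*}
\big\||k|^{s+1}\hat v(\cdot,k)\big\|_{L^2_T}\le\frac{1}{\sqrt2}|k|^s|\hat v_0(k)|+|k|^{s-1}\|\hat f(\cdot,k)\|_{L^2_T}.
\end{equation*}
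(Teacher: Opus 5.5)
Your proposal is correct and follows the standard argument behind the citation; the paper itself gives no proof and only refers to Bahouri--Chemin--Danchin. That argument is the same as yours: the mode-wise Duhamel formula, Cauchy--Schwarz for the $\sup_t$ bound and for continuity, the mode-wise energy identity with Young absorption for $L^2_T\dot H^{s+1}$, and H\"older interpolation against $L^\infty_T\dot H^s$ for general $p$.

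Two small fixes are needed.
\begin{itemize}
\item The energy identity holds only with the homogeneous pairing $\sum_{k\neq0}|k|^{2s}\hat a(k)\overline{\hat b(k)}$. The $\langle\xi\rangle^{2s}$ in the statement is a typo, and equivalence of norms cannot turn the inhomogeneous pairing into an equality.
\item The Young step should read $2\int_0^T\langle f,v\rangle_s\,\mathrm dt\le 2\|f\|_{L^2_T\dot H^{s-1}}\|v\|_{L^2_T\dot H^{s+1}}$. This still gives $\|v\|_{L^2_T\dot H^{s+1}}^2\le\|v_0\|_{\dot H^s}^2+\|f\|_{L^2_T\dot H^{s-1}}^2$, so the constant stays $1$.
\end{itemize}
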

    We now present the following a priori energy estimate.
 \begin{proposition}\label{esti;energy esti}
     Let $\eta>\frac 14$, $\kappa\in(2,2\eta+\frac 32)$ and $u_0\in \dot{H}^1(\m T)$ satisfying $\int_{\m T}u(x)\,\mathrm{d}x=0$. If $v\in L^2([0,T];\dot{H}^{\frac 32}(\m T))$ is the solution of \eqref{v} with $X\in C([0,T];W^{\kappa-1})$, then we have the following energy estimate:
     \begin{align}\label{eq;energy estimate}
         \Vert v(t)\Vert^2_{H^1}+\int_0^t\|v(t')\|_{H^2} ^2\,\mathrm{d}t'\leq e^{(C\|X\|^2_{C_T(H^{\kappa-1})}+2)t}\left(\|u_0\|_{H^1}^2+ C\|X\|_{C_T(H^{\kappa-1})}^4t\right).
     \end{align}
 \end{proposition}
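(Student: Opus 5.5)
We test equation \eqref{v} against $v$ in $H^1$, i.e.\ we apply the energy identity of Lemma \ref{le;v_estimates} with $s=1$. Zero mean is preserved: the right-hand side of \eqref{v} consists of $x$-derivatives plus $P(D)(\cdot)$, and the noise has been removed. Hence $\|\cdot\|_{H^1}\simeq\|\cdot\|_{\dot H^1}$, and we obtain
\begin{equation*}
\frac12\frac{\mathrm d}{\mathrm dt}\big(\|v\|_{L^2}^2+\|v_x\|_{L^2}^2\big)+\|v_x\|_{L^2}^2+\|v_{xx}\|_{L^2}^2=\langle F,v\rangle+\langle F_x,v_x\rangle ,
\end{equation*}
where $F=-(v+X)(v+X)_x+P(D)\big((v+X)^2+\frac12(v_x+X_x)^2\big)$. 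Since $v\in L^2_T\dot H^{3/2}$ is only a weak solution a priori, we justify the computation by mollifying in $x$ (or via the Galerkin approximation) and passing to the limit.

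We split $F$ into the pure-$v$ part, the cross terms, and the pure-$X$ part.

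\emph{Pure-$v$ part.} The key point is the Camassa--Holm cancellation. Write $m=v-v_{xx}$, so that $\langle F,v\rangle+\langle F_x,v_x\rangle=\langle F,m\rangle$. Then $-vv_x+P(D)(v^2+\frac12v_x^2)$ tested against $(1-\partial_x^2)v$ gives
\begin{equation*}
-\int (1-\partial_x^2)(vv_x)\,v\,\mathrm dx-\int \partial_x\big(v^2+\tfrac12v_x^2\big)\,v\,\mathrm dx .
\end{equation*}
This vanishes by the classical identity: the $H^1$ norm is conserved by the inviscid CH equation, since $\int(3vv_x-2v_xv_{xx}-vv_{xxx})v\,\mathrm dx=0$. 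So the cubic terms in $v$ disappear exactly. This is what makes a global bound possible.

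\emph{Cross terms $v$--$X$.} These are $-(vX)_x$ and $P(D)(2vX+v_xX_x)$, tested against $m$. Because $\kappa>2$, we have $X\in W^{\kappa-1}\subset W^{1+}$, so $X,X_x\in L^\infty$ with norms bounded by $\|X\|_{H^{\kappa-1}}$ (the paper's constant is in terms of the $H^{\kappa-1}$ norm; in 1D $H^{\kappa-1}\hookrightarrow W^{1,\infty}$ since $\kappa-1>1/2+1/2$). After one integration by parts, each term is bounded by $C\|X\|_{H^{\kappa-1}}\|v\|_{H^1}\|v\|_{H^2}$. One term needs care: $(vX)_x$ against $v_{xx}$ produces $vX_x v_{xx}+v_xXv_{xx}$, and this is where $X_x\in L^\infty$ is essential.

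There are also quadratic-in-$v$, linear-in-$X$ terms. These come from $vX_x+Xv_x$ and from $P(D)$ acting on products, but they fall into the same bilinear class. Terms like $v\,v_x X$ are cubic in $(v,v,X)$. We handle them by the $L^\infty$ bound on $X$ combined with the Gagliardo--Nirenberg inequality $\|v\|_{L^\infty}\lesssim\|v\|_{H^1}$; they are bounded by $C\|X\|\|v\|_{H^1}\|v\|_{H^2}$ only after an integration by parts that moves the derivative onto $v$ rather than $m$. This is the step I expect to be the main obstacle. If it fails, the fallback is to write $v=u-X$ and use the exact cancellation for $u$, then bound the remaining $X$-commutators.

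\emph{Pure-$X$ part and closing.} The terms $-XX_x+P(D)(X^2+\frac12X_x^2)$ are bounded in $L^2$ by $C\|X\|_{H^{\kappa-1}}^2$, so their pairing with $m$ is at most $C\|X\|^2\|v\|_{H^2}$. Applying Young's inequality, $\|v\|_{H^2}$ is absorbed into the dissipation $\|v_x\|^2+\|v_{xx}\|^2$, which controls $\|v\|_{H^2}^2$ up to a term $\|v\|_{L^2}^2$ (this accounts for the ``$+2$'' in the exponent). We arrive at
\begin{equation*}
\frac{\mathrm d}{\mathrm dt}\|v\|_{H^1}^2+\|v\|_{H^2}^2\le (C\|X\|_{C_TH^{\kappa-1}}^2+2)\|v\|_{H^1}^2+C\|X\|_{C_TH^{\kappa-1}}^4 .
\end{equation*}
Gronwall's inequality with $v(0)=u_0$ then yields \eqref{eq;energy estimate}.
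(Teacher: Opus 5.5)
Your argument is the paper's. You pair the $v$-equation with $m=v-v_{xx}$ (the paper equivalently multiplies the equation for $y=v-v_{xx}$ by $v$), cancel the cubic $v$-terms by the Camassa--Holm identity, bound the remaining $(v,v,X)$ and $(v,X,X)$ terms, and close with Young and Gronwall.

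One claim must be corrected. In one dimension $H^{\kappa-1}\hookrightarrow W^{1,\infty}$ requires $\kappa-1>3/2$, not $\kappa-1>1$. So for $\kappa\in(2,5/2]$, which covers all admissible $\kappa$ when $\eta\le 1/2$, you cannot bound $\|X_x\|_{L^\infty}$ by $\|X\|_{H^{\kappa-1}}$. You would only get it from the $\mathcal C^{\kappa-1}$ part of $W^{\kappa-1}$, and that is not the norm in the stated estimate.

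The $L^\infty$ bound on $X_x$ is also never needed. Put $v$ or $v_x$ in $L^\infty$ via $H^1\hookrightarrow L^\infty$, and put $X_x$ in $L^2$, for example
\[
\int_{\mathbb T} vX_xv_{xx}\,\mathrm dx\le\|v\|_{L^\infty}\|X_x\|_{L^2}\|v_{xx}\|_{L^2},\qquad
\int_{\mathbb T} v_x^2X_x\,\mathrm dx\le\|v_x\|_{L^\infty}\|v_x\|_{L^2}\|X_x\|_{L^2}.
\]
Every term is then controlled by $\|X\|_{H^1}\le\|X\|_{H^{\kappa-1}}$. The step you flagged as the main obstacle is routine H\"older, and no fallback is required.

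Bounding $\int (vX)_xv_{xx}$ directly in this way is slightly cleaner than the paper's route. The paper integrates by parts once more, which produces the term $vv_xX_{xx}$ with $X_{xx}$ of negative regularity, and then has to handle it by duality.
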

     
     \begin{proof}
         Define $y=v-v_{xx}$, we can deduce that 
         \begin{equation}
             \p_t y-\Delta y+vy_x+2yv_x=f(v,X)-f_{xx}(v,X),
         \end{equation}
         where $f(v,X)\triangleq-vX_x-Xv_x-XX_x-\partial_x(1-\partial_{xx})^{-1}(2vX+X^2
     +v_xX_x+\frac{1}{2}X_x^2)$. Multiplying $2v$ both sides and integrating, we have
     \begin{align*}
       2\int_{\m T}y_t v\,\mathrm{d}x=2\int_{\m T}\Delta y v\,\mathrm{d}x-2\int_{\m T}vy_xv\,\mathrm{d}x-4\int_{\m T}yv_xv\,\mathrm{d}x+2\int_{\m T}(f(v,X)-f_{xx}(v,X))v\,\mathrm{d}x.
     \end{align*}
     Using that $2\int_{\m T}vy_xv\,\mathrm{d}x+4\int_{\m T}yv_xv\,\mathrm{d}x=0$, we have
     \begin{equation}\label{eq;energy conservation}
         2\int_{\m T}y_t v\,\mathrm{d}x=2\int_{\m T}\Delta y v\,\mathrm{d}x+2\int_{\m T}(f(v,X)-f_{xx}(v,X))v\,\mathrm{d}x.
     \end{equation}
     Noting that $\Vert v\Vert_{H^1}^2=\int_{\m T}(v^2+v_x^2)\,\mathrm{d}x=\int_{\m T}yv\,\mathrm{d}x$, combining with \eqref{eq;energy conservation}, we have
     \begin{align}\label{eq;dt energy}
         \frac{\mathrm{d}}{\mathrm{d}t}\Vert v\Vert_{H^1}^2+2\Vert v\Vert_{H^2}^2=2\Vert v\Vert_{L^2}^2+2\int_{\m T}(f(v,X)-f_{xx}(v,X))v\,\mathrm{d}x .
     \end{align}
    For the term $\int_{\m T}(f(v,X)-f_{xx}(v,X))v\,\mathrm{d}x$, a straightforward calculation gives 
     \begin{align}\label{eq;f-fxx}
         &\int_{\m T}\left(f(v,X)-f_{xx}(v,X)\right)v \,\mathrm{d}x\notag\\
         &=\int_{\m T}\Big(-v^2 X_x-vv_x X-v XX_x+v_x(1-\p_{xx})^{-1}(2v X+X^2
     +v_xX_x+\frac{1}{2}X_x^2)+f_xv_x \Big)\,\mathrm{d}x \notag\\
         &=\int_{\m T} \left( -\frac 32v^2 X_x+\frac 32 v_x X^2-\frac 12 v_x^2 X_x -vv_xX_{xx}+v_{xx}XX_x+\frac 12 v_xX_x^2 \right)\,\mathrm{d}x.
     \end{align}
     Noting that $v\in L^2([0,T];\dot{H}^{\frac 32}(\m T))$ and $ X\in C([0,T];W^{\kappa-1})$, using Bony’s decomposition, we obtain
     \begin{align*}
         -vv_x-\p_x(vX)-XX_x-\partial_x(1-\partial_{xx})^{-1}\left(v^2+2vX+X^2
     +\frac{1}{2}v_x^2+v_xX_x+\frac{1}{2}X_x^2\right)\in L^2_T(L^2).
     \end{align*}
     Applying Lemma \ref{le;v_estimates} with $s=1$ implies that 
     $$v\in C([0,\bar{T}];\dot{H}^1(\m T))\cap L^2([0,\bar{T}];\dot{H}^2(\m T)).$$
     It is straightforward to verify that all the above integrations by parts make sense.
     
     Now we turn to the estimate of \eqref{eq;f-fxx}. The estimates for the first two terms of \eqref{eq;f-fxx} follow by similar arguments, so we focus here on estimating the remaining terms. Since $X\in C_TW^{\kappa-1}(\m T)$, we have 
     \begin{align}
         &\int_{\m T}v_x^2 X_x \,\mathrm{d}x
          \leq \|v_x\|_{H^1}\|v_x\|_{L^2}\|X\|_{H^{\kappa-1}}
          \leq \frac{1}{12} \|v_x\|_{H^1}^2 +C \|v\|_{H^1}^2\|X\|_{H^{\kappa-1}}^2,\label{eq1}\\
         &\int_{\m T}vv_xX_{xx} \,\mathrm{d}x
          \leq \|v\|_{H^1}\|v_x\|_{H^1}\|X\|_{H^{\kappa-1}} 
          \leq \frac{1}{12} \|v_x\|_{H^1}^2 +C \|v\|_{H^1}^2\|X\|_{H^{\kappa-1}}^2,\label{eq2}\\
         &\int_{\m T}v_{xx}XX_x \,\mathrm{d}x\leq  \|v_x\|_{H^1}\|X\|_{H^{\kappa-1}}^2 
          \leq \frac{1}{12} \|v_x\|_{H^1}^2 +C \|X\|_{H^{\kappa-1}}^4   \label{eq3} 
     \end{align}  
     and
        \begin{align}\label{eq4}
           \int_{\m T}v_xX_x^2 \,\mathrm{d}x&\leq \|X_x\|_{H^{k-2}}\|v_xX_x\|_{H^{2-k}} \notag\\
          &\leq \|X\|_{H^{\kappa-1}}(\|v_x\|_{L^\infty}\|X_x\|_{H^{2-k}}+\|X_x\|_{B^{-1}_{\infty,\infty}}\|v_x\|_{H^{3-k}}+\|v_x\|_{B^{-\frac 14}_{\infty,\infty}}\|X_x\|_{L^2} )\notag\\
          &\leq \|v_x\|_{H^1}\|X\|_{H^{\kappa-1}}^2 \notag\\
          &\leq \frac{1}{12} \|v_x\|_{H^1}^2 +C \|X\|_{H^{\kappa-1}}^4. 
        \end{align}
     Integrating equation \eqref{eq;dt energy} with respect to time over $[0,t]$, and combining inequalities \eqref{eq;f-fxx}, \eqref{eq1}, \eqref{eq2}, \eqref{eq3} and \eqref{eq4}, we obtain 
     \begin{align*}
         &\Vert v(t)\Vert^2_{H^1}+\int_0^t\|v(t')\|_{H^2}^2\,\mathrm{d}t' \\
         &\leq \|u_0\|_{H^1}^2+ C\int^t_0\|X(t')\|_{H^{\kappa-1}}^4\mathrm{d}t'+\int_0^t\Vert v(t')\|_{H^1}^2(C\|X(t')\|_{H^{\kappa-1}}^2+2)\,\mathrm{d}t'.
     \end{align*}
    Applying Gronwall’s inequality yields 
    \begin{align*}
         \Vert v(t)\Vert^2_{H^1}+\int_0^t\|v(t')\|_{H^2}^2\,\mathrm{d}t' \leq e^{(C\|X\|^2_{C_T(H^{\kappa-1})}+2)t}\left(\|u_0\|_{H^1}^2+ C\|X\|_{C_T(H^{\kappa-1})}^4t\right),
     \end{align*}
    which completes the proof.
    \end{proof}

\subsection{Proof of Theorem \ref{thm;global solution}}
 \begin{lemma}\label{le;exist and blow-up}
        Let $\eta>\frac 14, s\in[1,2\eta+\frac 12), u_0\in \dot{H}^s(\m T)$ satisfying $\int_{\m T}u(x)\,\mathrm{d}x=0$. Let $\kappa\in(2,2\eta+\frac 32)$ for $s=1$, and $\kappa\in[s+1,2\eta+\frac 32)$ for $s>1$. If there exists a constant $C$ such that
        \begin{equation}\label{eq;condition_X}
            CT_0^{\frac{1}{4}}\Vert X\Vert_{C_{\bar{T}}W^{\kappa-1}}<\frac{1}{4},\quad\text{for}\quad 0<T_0\leq \bar{T},
        \end{equation}
        then there exists $T\leq T_0$ such that \eqref{v} has a unique solution $v\in L^4_T([0,T];\dot{H}^{s+\frac 12})$. Let $T_{u_0}$ denote the maximal time of existence of such a solution. Then:
    \begin{itemize}
    \item For the same constant $C$,
    \begin{equation*}
        \Vert u_0\Vert_{\dot{H}^s}\leq\frac{1}{16C}\Longrightarrow T_{u_0}\geq T_0.
    \end{equation*}
    \item If $T_{u_0}\leq T_0$, then
    \begin{equation*}
        \int^{T_{u_0}}_0\Vert v(t)\Vert_{\dot{H}^{s+\frac 12}}^4 \,\mathrm{d}t=\infty.
    \end{equation*}
    \end{itemize}
    \end{lemma}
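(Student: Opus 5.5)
We argue by a contraction in the space $E_T\triangleq L^4([0,T];\dot H^{s+\frac12})$, following the Fujita--Kato type scheme behind Lemma \ref{le;v_estimates}. The equation \eqref{v} is written in mild form
\[
v=P(t)u_0-\Big[\tfrac12\mathcal B(v,v)+\mathcal B(v,X)+\tfrac12\mathcal B(X,X)\Big],
\]
with $\mathcal B$ as in \eqref{def;B}, and we apply the abstract fixed point Lemma \ref{le;continuous dependence}, taking $L(v)=\mathcal B(v,X)$. Zero mean is preserved by the equation, because every nonlinear term is a total derivative or carries $P(D)$. Hence homogeneous and inhomogeneous norms are equivalent throughout.

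\textbf{Linear and forcing terms.} First, Lemma \ref{le;v_estimates} with $p=4$ gives $\|P(t)u_0\|_{L^4_T\dot H^{s+\frac12}}\le \|u_0\|_{\dot H^s}$. More precisely, it gives $\|P(t)u_0\|_{L^4_T\dot H^{s+\frac12}}\to 0$ as $T\to0$ by dominated convergence on the Fourier side. Next, the same lemma with $f$ in $L^2_T\dot H^{s-1}$ controls the Duhamel terms, and we use $L^2_T\le T^{1/4}L^4_T$ in time.

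\textbf{Bilinear estimates.} The key estimates are:
\begin{align*}
\|\mathcal B(v,w)\|_{E_T}&\lesssim \|v\|_{E_T}\|w\|_{E_T},\\
\|\mathcal B(v,X)\|_{E_T}&\lesssim T^{\frac14}\|X\|_{C_TW^{\kappa-1}}\|v\|_{E_T},\\
\|\mathcal B(X,X)\|_{E_T}&\lesssim T^{\frac12}\|X\|_{C_TW^{\kappa-1}}^2.
\end{align*}
For the quadratic term, $vv_x$ and $P(D)(v^2+\frac12v_x^2)$ are bounded in $\dot H^{s-1}$ by $\|v\|_{\dot H^{s+\frac12}}^2$ via the one-dimensional product law $\dot H^{s+\frac12}\cdot\dot H^{s-\frac12}\subset\dot H^{s-1}$ (valid for $s\ge1$). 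Hölder in time then maps $L^4\times L^4\to L^2$. For the mixed term, the regularity $X\in W^{\kappa-1}$ with $\kappa-1\ge s$ (or $\kappa>2$ when $s=1$) lets the products $vX_x$, $Xv_x$, $v_xX_x$ be placed in $\dot H^{s-1}$ with one factor $v\in\dot H^{s+\frac12}$, using paraproduct estimates. This gives the factor $T^{1/4}\|v\|_{L^4}$.

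Under \eqref{eq;condition_X}, the linear map $L$ has norm below $\tfrac14$ on $E_{T}$ for all $T\le T_0$. Choosing the constant $C$ also to dominate the bilinear constant, the smallness $\|u_0\|_{\dot H^s}\le 1/(16C)$, together with the bound on the forcing term, puts the data term in the small ball required by Lemma \ref{le;continuous dependence}. This yields existence and uniqueness on $[0,T_0]$, i.e.\ the first bullet. For general data we instead take $T$ small so that $\|P(t)u_0\|_{E_T}$ and $T^{1/2}\|X\|^2$ are small; this gives local existence, and uniqueness follows from the contraction.

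\textbf{Blow-up criterion.} The second bullet comes from a continuation argument. Suppose $T_{u_0}\le T_0$ but $\int_0^{T_{u_0}}\|v\|_{\dot H^{s+\frac12}}^4<\infty$. Split the solution from an intermediate time $t_1$ near $T_{u_0}$. The smallness of the tail norm $\|v\|_{L^4([t_1,T_{u_0}];\dot H^{s+\frac12})}$, fed through the mild formulation, shows that the free evolution of $v(t)$ has uniformly small $L^4$ norm on intervals of fixed length, for all $t$ near $T_{u_0}$. Here we use $v\in C_T\dot H^s$ from Lemma \ref{le;v_estimates}. The local existence time is therefore uniform, and the solution extends beyond $T_{u_0}$, a contradiction.

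The main obstacle is making this uniformity rigorous. The local time depends on the profile of the data rather than only on its norm, so we compare $P(\cdot)v(t)$ with $v$ itself via the Duhamel identity on $[t,t+\delta]$. A second delicate point is the product estimate for $v_xX_x$ when $s=1$ and $\kappa$ is only slightly above $2$, which requires a careful Bony decomposition as in \eqref{eq4}.
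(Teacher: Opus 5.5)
Your local theory matches the paper's. You use the same mild formulation, the same three estimates in $L^4_T\dot H^{s+\frac12}$ from Lemma \ref{le;v_estimates}, and Lemma \ref{le;continuous dependence} with $L(v)=\mathcal B(v,X)$. The first bullet follows as you say, because \eqref{eq;condition_X} also gives $CT_0^{1/2}\|X\|_{C_{\bar T}W^{\kappa-1}}^2<\frac{1}{16C}$.

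You depart from the paper in two places.

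\textbf{General data.} You make $\|P(t)u_0\|_{L^4_T\dot H^{s+\frac12}}$ small by absolute continuity. The paper instead cuts $u_0$ at a frequency $\rho_{u_0}$ beyond which its $\dot H^s$ tail is below $\frac{1}{32C}$. This gives an existence time depending only on $\rho_{u_0}$ and $\|u_0\|_{\dot H^s}$.

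\textbf{Blow-up criterion.} The paper's argument rests on that quantitative existence time. If $\int_0^T\|v\|_{\dot H^{s+\frac12}}^4<\infty$, the bound $\big(\sum_k|k|^{2s}\sup_{t<T}|\hat v(t,k)|^2\big)^{1/2}<\infty$ from Lemma \ref{le;v_estimates} gives one $\rho$ controlling the high-frequency tail of every $v(t)$, $t<T$. This yields a uniform restart time. Your route replaces this equi-smallness of high frequencies by smallness of $\|v\|_{L^4([t_1,T_{u_0});\dot H^{s+\frac12})}$, propagated through the Duhamel formula. It needs only the plain $\dot H^s$ bound on Duhamel integrals, not the sup-inside-the-sum estimate.

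As written, though, the identity you single out in your last paragraph does not work. The Duhamel formula on $[t,t+\delta]$ is unavailable for $t>T_{u_0}-\delta$, because $v$ is not defined beyond $T_{u_0}$, and those are exactly the $t$ you need. Take Duhamel from the fixed time $t_1$ instead:
\[
P(\tau)v(t)=P(\tau+t-t_1)v(t_1)-P(\tau)\int_{t_1}^{t}P(t-r)F(r)\,\mathrm dr,
\]
where $F$ is the whole right-hand side of \eqref{v}.

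The second term has $L^4([0,\delta];\dot H^{s+\frac12})$ norm $\lesssim\|F\|_{L^2([t_1,T_{u_0}];\dot H^{s-1})}$. This is small by your bilinear bounds once $t_1$ is close to $T_{u_0}$.

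The first term is bounded by $\|P(\cdot)v(t_1)\|_{L^4([0,T_{u_0}-t_1+\delta];\dot H^{s+\frac12})}$. On $[0,T_{u_0}-t_1)$, the same Duhamel identity bounds it by the tail norm of $v$ plus the same $F$ term. Once $t_1$ is fixed, the remaining window of length $\delta$ is small by absolute continuity. So you choose $t_1$ first and $\delta$ second, and the restart time is uniform for all $t\in[t_1,T_{u_0})$.

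A shorter alternative: finiteness of $\int_0^{T_{u_0}}\|v\|^4_{\dot H^{s+\frac12}}$ puts $F$ in $L^2([0,T_{u_0}];\dot H^{s-1})$. Lemma \ref{le;v_estimates} then gives $v\in C([0,T_{u_0}];\dot H^s)$ on the closed interval, and you can restart directly from $v(T_{u_0})$.
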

    
    \begin{proof}
        For equation \eqref{v}, we know that the solution $v$ has the form
        \begin{equation*}
          v(t,x)=P(t)u_0-\left[\frac{1}{2}\mathcal{B}(v,v)+\mathcal{B}(v,X)+\frac{1}{2}\mathcal{B}(X,X)\right],
    \end{equation*}
    where $\mathcal{B}(f,g)$ is defined by equation \eqref{def;B}. We note that $\mathcal{B}(f,g)$ satisfies 
    \begin{align*}
        \p_t\mathcal{B}(f,g)-\Delta\mathcal{B}(f,g)=\p_x(fg)+2\p_x(1-\p_{xx})^{-1}(fg)+\p_x(1-\p_{xx})^{-1}(f_xg_x).
    \end{align*}
    By virtue of Lemma \eqref{le;v_estimates} and Bony’s decomposition, we obtain the following estimates:
    \begin{gather}
        \Vert P(t)u_0-\frac 12 \mathcal{B}(X,X)\Vert_{L^4_T(\dot{H}^{s+\frac 12})}\leq \left( \Vert u_0\Vert_{\dot H^{s}}+CT^{\frac 12}\Vert X\Vert_{C_TW^{\kappa-1}}^2\right), \\
        \Vert \mathcal{B}(v,X)\Vert_{L^4_T(\dot{H}^{s+\frac 12})}\leq CT^{\frac 14}\Vert v\Vert_{L_T^4\dot H^{s+\frac 12}}\Vert X\Vert_{C_TW^{\kappa-1}}, \\
        \Vert  \mathcal{B}(v,v)\Vert_{L^4_T(\dot{H}^{s+\frac 12})}\leq C\Vert v\Vert_{L_T^4\dot H^{s+\frac 12}}^2.
    \end{gather}
    Applying Lemma \ref{le;continuous dependence} with $r=\frac{1}{8C}$ yields that if $\Vert u_0\Vert_{\dot{H}^s}\leq\frac{1}{16C}$ and $CT_0^{\frac{1}{4}}\Vert X\Vert_{C_{\bar{T}}W^{\kappa-1}}<\frac{1}{4}$, there exists a unique solution of \eqref{v} in the ball $B(0,2r)$ in the space $L^4([0,T_0];\dot H^{s+\frac 12})$.

    Now we consider the case of general initial data. Since $u_0\in \dot{H}^s(\m T)$, there exists $\rho_{u_0}$ such that
    \begin{align*}
        \left( \sum_{|k|\geq \rho_{u_0}}|k|^{2s}|\widehat{u_0}(k)|^2\right)^{\frac{1}{2}}\leq \frac{1}{32C}.
    \end{align*}
    Setting $u_0^\flat=\mathcal{F}(\mathbf{1}_{B(0,\rho_{u_0})}\widehat{u_0})$, by Lemma \ref{le;v_estimates}, if $T\leq \left(\frac{1}{32C\rho_{u_0}^{\frac 12}\Vert u_0\Vert_{\dot H^s}} \right)^{\frac 14} \wedge T_0$, we have
    \begin{align*}
        \Vert P(t)u_0\Vert_{L^4_T(\dot{H}^{s+\frac 12})}&\leq \Vert u_0-u_0^\flat\Vert_{\dot H^s}+\Vert P(t)u_0^\flat\Vert_{L^4_T(\dot{H}^{s+\frac 12})} \\
        &\leq \frac{1}{32C}+\rho_{u_0}^{\frac 12}T^\frac{1}{4}\Vert u_0\Vert_{\dot H^s} \\
        &\leq \frac{1}{16C}.
    \end{align*}
    Using an argument similar to the above case implies that there exists a unique solution of \eqref{v} in the space $L^4([0,T];\dot H^{s+\frac 12})$.
    
    Finally, we prove the blow-up criterion by means of a contradiction argument. Assume that $v$ is a solution of \eqref{v} on $[0,T)$ satisfying 
    \begin{equation*}
        \int^T_0\Vert v(t)\Vert^4_{\dot H^{s+\frac 12}}\,\mathrm{d}t<\infty.
    \end{equation*}
    Denote by $T_{u_0}$ the maximal time of existence. We claim that $T_{u_0}>T$. Indeed, by Lemma \ref{le;v_estimates} and condition \eqref{eq;condition_X}, we have
    \begin{align*}
        &\left(\sum_{k\neq 0}|k|^{2s}\left(\sup_{t\in(0,T)}\left\vert \hat v(t,k)\right\vert\right)^2\right)^\frac{1}{2}\\ &\leq \Vert u_0\Vert_{\dot H^s} +\frac{1}{\sqrt{2}}\Big\Vert -\frac 12\p_x(v^2)-\p_x(vX)-\frac 12\p_x(X^2)-\partial_x(1-\partial_{xx})^{-1}\Big(v^2+2vX+X^2\\
     &~~~~~~~~~~~~~~~~~~~~~~~~~~~+\frac{1}{2}v_x^2+v_xX_x+\frac{1}{2}X_x^2\Big)\Big\Vert_{L_T^2(\dot H^{s-1})}\\
     &\leq \Vert u_0\Vert_{\dot H^s} +\frac{C}{\sqrt{2}}\left( \Vert v\Vert_{L^4_T(\dot{H}^{s+\frac 12})}^2+ T^{\frac 14}\Vert v\Vert_{L^4_T(\dot{H}^{s+\frac 12})}\Vert X\Vert_{C_TW^{\kappa-1}}+T^{\frac 12}\Vert X\Vert_{C_TW^{\kappa-1}}^2\right) \\
     &\leq \Vert u_0\Vert_{\dot H^s} +\frac{C}{\sqrt{2}}\Vert v\Vert_{L^4_T(\dot{H}^{s+\frac 12})}^2+C\Vert v\Vert_{L^4_T(\dot{H}^{s+\frac 12})}+C <\infty,
    \end{align*}
    which implies that there exists $\rho>0$ such that 
    \begin{align*}
        \left( \sum_{|k|\geq \rho}|k|^{2s}|\widehat{v}(t,k)|^2\right)^{\frac{1}{2}}\leq \frac{1}{32C},\quad\forall t\in[0,T).
    \end{align*}
    We can therefore deduce that $T_{u_0}>T$, which completes the proof.
    \end{proof}
    
Theorem \ref{thm;global solution} is established by combining Proposition \ref{esti;energy esti} and Lemmas \ref{le;v_estimates}, \ref{le;exist and blow-up}.
\begin{proof}[Proof of Theorem \ref{thm;global solution}]
    For $X$ satisfying \eqref{eq;X(t) decomposition}, using Lemma \ref{X_bdd}, we have 
    \begin{equation*}
        X\in C([0,\bar{T}];W^{\kappa-1}),\quad \forall\, \bar T\geq 0, \kappa\in(2,2\eta+\frac 32).
    \end{equation*}
    Fix $\bar T\geq 0$ and $\kappa\in(2,2\eta+\frac 32)$. It follows that there exists $T_0\in(0,\bar T]$ such that 
    \begin{equation*}
        CT_0^{\frac{1}{4}}\Vert X\Vert_{C_{\bar{T}}W^{\kappa-1}}<\frac{1}{4}.
    \end{equation*}
    Then, Lemma \ref{le;exist and blow-up} implies that  there exists $\widetilde{T}\leq T_0$ such that \eqref{v} has a unique solution $v\in L^4([0,\widetilde{T}];\dot H^{\frac 32} )$. Let $T_{u_0}$ denote the maximal time of existence of such a solution. We claim that $T_{u_0}>T_0$. Indeed, if $T_{u_0}\leq T_0$, using Lemma \ref{le;exist and blow-up} implies that
    \begin{align}\label{eq;infty}
        \int^{T_{u_0}}_0\Vert v(t)\Vert_{\dot{H}^{\frac 32}}^4 \,\mathrm{d}t=\infty.
    \end{align}
    It follows from interpolation inequality and the energy estimate from Proposition \ref{esti;energy esti} that for all $T<T_{u_0}$, we have 
    \begin{equation*}
        \Vert v\Vert_{L^4_T\dot H^{\frac 32}}\leq \Vert v\Vert_{C_T\dot H^{1}}^{\frac 12} \Vert v\Vert_{L_T^2\dot H^{2}}^{\frac 12}\leq C(\Vert u_0\Vert_{\dot H^{1}},  \Vert X\Vert_{C_{\bar T} W^{\kappa-1}}, T_{u_0})<\infty
    \end{equation*}
    with uniform upper bound, which  contradicts \eqref{eq;infty}. The claim is proved, and we now have $v\in C([0,T];\dot{H}^1(\m T))\cap L^2([0,T];\dot{H}^2(\m T))$. If $T_0<\bar T$, define $\widetilde{T_0}=T_0\wedge(\bar T-T_0)$ and take $T_0$ as the initial time. We also have 
    \begin{equation*}
        C\widetilde{T_0}^{\frac{1}{4}}\Vert X\Vert_{C([T_0,\bar{T}];W^{\kappa-1})}<\frac{1}{4}.
    \end{equation*}
    Let $T_{v(T_0)}$ be the maximal existence time of the solution starting from the initial time $T_0$. Repeating the previous argument, we obtain $\widetilde{T_0}<T_{v(T_0)}$. After a finite number of such arguments, we deduce that $v\in C([0,\bar{T}];\dot{H}^1(\m T))\cap L^2([0,\bar{T}];\dot{H}^2(\m T))$. Since $\bar T$ is arbitrary, we obtain $v$ as a global solution of \eqref{v}. By the proof of Lemma \ref{le;exist and blow-up}, it’s unique. Since $X$ satisfies \eqref{eq;X(t) decomposition}, we prove that $u=v+X $ is the global solution of \eqref{eq;CH;diffusion}. We complete the proof.

\end{proof}

 \appendix
\section{Fixed-point argument}

\begin{lemma}\label{le;Picard th}\label{le;continuous dependence}
	Let $E$ be a Banach space, $\ml B$ a continuous bilinear map from $E\times E$ to $E$, and $r$ a positive real number such that
	$$r<\frac{1}{8\Vert \ml B\Vert}\quad with \quad\Vert \ml B\Vert=\sup_{\Vert u\Vert_E,\Vert v\Vert_E\leq1}\Vert \ml B(u,v)\Vert_E.$$ 
	Let $L: E \rightarrow E$ be a linear map satisfying
    \begin{equation}\label{eq;condition L}
        \sup_{v\in E}\frac{\Vert  L(v)\Vert_E}{\Vert v\Vert_E}< \frac{1}{4}.
    \end{equation}
	Then for any $a$ in a ball $B(0,r)\subset E$, there exists a unique $\tilde x$ in $\ml B(0,2r)$ such that 
	$$\tilde x=a+ L(\tilde x)+\ml B(\tilde x,\tilde x).$$
    Moreover, if there exists $a_n$ and $L_n$ satisfying
       \begin{equation}\label{eq;an,Ln}
        \Vert a_n-a\Vert_E\to 0,\quad \Vert L_n-L\Vert_{E\to E},\quad as\: n\to \infty.
    \end{equation}
    then for $x_n=a_n+L_n(x)+\ml B(x_n,x_n)$, there exists $N\in \m N$ such that for any $n>N$, we have
    \begin{equation}
        \Vert x_n-x\Vert_E\leq 4\Vert a_n-a\Vert_E+8r\Vert L_n-L\Vert_{E\to E}.
    \end{equation}
\end{lemma}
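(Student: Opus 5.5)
We prove existence and uniqueness by the standard contraction mapping argument applied to $\Phi(x)=a+L(x)+\ml B(x,x)$ on the closed ball $\overline{B(0,2r)}\subset E$. Write $\ell:=\sup_{v}\Vert L(v)\Vert_E/\Vert v\Vert_E<\frac14$. Bilinearity gives $\Vert\ml B(x,y)\Vert_E\le\Vert\ml B\Vert\,\Vert x\Vert_E\Vert y\Vert_E$.

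\emph{Invariance of the ball.} For $\Vert x\Vert_E\le 2r$ we have
\begin{align*}
\Vert\Phi(x)\Vert_E\le r+2r\ell+4r^2\Vert\ml B\Vert< r+\tfrac r2+\tfrac r2=2r,
\end{align*}
using $4r\Vert\ml B\Vert<\frac12$, which follows from $r<\frac{1}{8\Vert\ml B\Vert}$.

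\emph{Contraction.} For $x,y$ in the ball, $\ml B(x,x)-\ml B(y,y)=\ml B(x-y,x)+\ml B(y,x-y)$. Hence
\begin{align*}
\Vert\Phi(x)-\Phi(y)\Vert_E\le(\ell+4r\Vert\ml B\Vert)\Vert x-y\Vert_E<\tfrac34\Vert x-y\Vert_E.
\end{align*}
Banach's fixed point theorem therefore gives a unique fixed point $\tilde x$ in the ball. Here $\ml B(0,2r)$ in the statement is read as the ball $B(0,2r)$; the strict inequalities above show that the fixed point actually lies in the open ball.

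\emph{Stability.} We read the hypothesis as $\Vert L_n-L\Vert_{E\to E}\to0$ and the equation as $x_n=a_n+L_n(x_n)+\ml B(x_n,x_n)$. Choose $N$ such that for $n>N$ we have $\Vert a_n\Vert_E<r$ and $\Vert L_n\Vert<\frac14$; this is possible since $\Vert a\Vert_E<r$ and $\ell<\frac14$ are strict. The first part then applies to $(a_n,L_n)$ and produces $x_n$ in the ball of radius $2r$. Subtracting the two fixed-point equations,
\begin{align*}
x_n-\tilde x=(a_n-a)+(L_n-L)(\tilde x)+L_n(x_n-\tilde x)+\ml B(x_n-\tilde x,x_n)+\ml B(\tilde x,x_n-\tilde x).
\end{align*}
Therefore
\begin{align*}
\Vert x_n-\tilde x\Vert_E\le\Vert a_n-a\Vert_E+2r\Vert L_n-L\Vert+\bigl(\Vert L_n\Vert+4r\Vert\ml B\Vert\bigr)\Vert x_n-\tilde x\Vert_E.
\end{align*}
The coefficient of $\Vert x_n-\tilde x\Vert_E$ on the right is less than $\frac34$. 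Absorbing that term into the left side gives
\begin{align*}
\Vert x_n-\tilde x\Vert_E\le 4\Vert a_n-a\Vert_E+8r\Vert L_n-L\Vert_{E\to E}.
\end{align*}

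The only delicate step is selecting $N$ so that the perturbed data stay within the strict thresholds $\Vert a_n\Vert_E<r$ and $\Vert L_n\Vert<\frac14$. This relies on the strictness of the inequalities $\Vert a\Vert_E<r$ and $\ell<\frac14$ in the hypotheses; everything else is routine.
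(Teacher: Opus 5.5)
Your proof is correct and complete. The paper states this lemma without proof, and your contraction argument on $\overline{B(0,2r)}$ is the standard one (Lipschitz constant $\ell+4r\Vert\ml B\Vert<\frac34$). Your stability step uses the same subtract-and-absorb argument the paper gives for its coupled variant, Lemma~\ref{le:Picard-stability}. Your readings of the typos are the natural ones: $\Vert L_n-L\Vert_{E\to E}\to0$, and $L_n(x_n)$ in place of $L_n(x)$. Choosing $N$ so that $\Vert a_n\Vert_E<r$ and $\Vert L_n\Vert<\frac14$ is exactly what guarantees $x_n$ exists in the $2r$-ball, and explains why $N$ appears in the statement. With these, you recover exactly the constants $4$ and $8r$.
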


We also record a variant of the abstract fixed-point argument that will be 
useful for the coupled systems considered below. Let
\[
    f:E\times E\to X
\]
be a linear map, that is,
\begin{equation}\label{eq;linear;f}
    \begin{aligned}
        f(c(x,y))&=c f(x,y),\\
        f(x_1+x_2,y_1+y_2)
        &=f(x_1,y_1)+f(x_2,y_2).
    \end{aligned}
\end{equation}
Assume moreover that $f$ satisfies
\begin{equation}\label{eq;controlled;paracontrolled ansatz}
    \|f(x,y)\|_X
    \leq \|x\|_{\dot E}+c\|y\|_E.
\end{equation}
We then have the following fixed-point result.
\begin{lemma}\label{le;Picard th;specific}
		Let $\dot E, E,X$ be Banach spaces satisfying $ \dot E\subset X \subset E$, $
		\ml E:=\dot E\times E$ be a Banach space with norm 
		$$\Vert (x,y)\Vert_{\ml E}=2\Vert x\Vert_{\dot E}+\Vert y\Vert_E.$$
		Moreover, let $\ml B$ be a continuous bilinear map from $X\times X$ to $\dot E$, and $r$ a positive real number such that
		$$\Vert \ml B(u,v)\Vert_{\dot  E}< \frac{1}{16r}\Vert u\Vert_X \Vert v\Vert_X.$$
		Let $\ml G:X\to\dot E$, $g:E\to \dot E$ and $h:\dot E\to\dot E$ be linear maps  satisfying 
		$$\sup_{v\in E}(\frac{\Vert \ml G(v)\Vert_{\dot E}}{\Vert v\Vert_X},\frac{\Vert  g(v)\Vert_{\dot E}}{\Vert v\Vert_E})<\frac{1}{16},\quad \frac{\Vert h(v)\Vert_{\dot E}}{\Vert v\Vert_{\dot E}}< \frac{1}{16}.$$
		Let $f$ satisfy \eqref{eq;linear;f} and \eqref{eq;controlled;paracontrolled ansatz} for $X$. Then, for any $(a,b)\in B(0,\frac{r}{4})\subset \ml E$ and $0<c<\frac{1}{30}$, there exists a unique $(x,y)\in B(0,4r)\subset \ml E$ such that 
		\begin{equation}\label{eq;key mapping}
			y=b+f(x,y),\quad  x=a+g(y)+h(x)+\ml B(f(x,y),f(x,y))+\ml G(f(x,y)).
		\end{equation}
\end{lemma}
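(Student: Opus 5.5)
The natural route is a contraction argument for the map $\Phi(x,y)=(\Phi_1(x,y),\Phi_2(x,y))$ defined by the right-hand sides of \eqref{eq;key mapping}:
\[
\Phi_2(x,y)=b+f(x,y),\qquad \Phi_1(x,y)=a+g(y)+h(x)+\ml B(f(x,y),f(x,y))+\ml G(f(x,y)).
\]
We work on the closed ball $B(0,4r)\subset\ml E$ with the weighted norm $\|(x,y)\|_{\ml E}=2\|x\|_{\dot E}+\|y\|_E$. The weight $2$ on the $\dot E$-component is what lets the $y$-equation be absorbed. Its only source term $f(x,y)$ is controlled through \eqref{eq;controlled;paracontrolled ansatz} by $\|x\|_{\dot E}+c\|y\|_E$. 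We will use this bound both in $X$ and, via the embedding $X\subset E$ with constant normalized to one, in $E$. Since $c<1/30$, this is at most $\tfrac12\|(x,y)\|_{\ml E}+c\|(x,y)\|_{\ml E}$, so the $y$-component has Lipschitz constant strictly less than $1$. The linearity \eqref{eq;linear;f} gives $f(x_1,y_1)-f(x_2,y_2)=f(x_1-x_2,y_1-y_2)$, so the same bound governs differences.

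\textbf{Invariance of the ball.} Take $(x,y)\in B(0,4r)$ and set $R=\|(x,y)\|_{\ml E}\le 4r$, so that $\|f(x,y)\|_X\le \|x\|_{\dot E}+c\|y\|_E\le (\tfrac12+c)R$.
\begin{itemize}
\item For the first component, the hypotheses give $\|g(y)\|_{\dot E}+\|h(x)\|_{\dot E}+\|\ml G(f)\|_{\dot E}\le \tfrac1{16}\big(\|y\|_E+\|x\|_{\dot E}+\|f\|_X\big)$. The bilinear term satisfies $\|\ml B(f,f)\|_{\dot E}\le \tfrac1{16r}\|f\|_X^2\le \tfrac1{16r}(\tfrac12+c)^2R^2$, which is at most $\tfrac{R}{4}\cdot\tfrac{(1/2+c)^2}{1}\cdot\tfrac14\cdot 4$ and therefore a small multiple of $R$ on the ball. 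Doubling, $2\|\Phi_1\|_{\dot E}\le 2\|a\|_{\dot E}+\theta_1 R$ with an explicit $\theta_1$.
\item For the second component, $\|\Phi_2\|_E\le \|b\|_E+(\tfrac12+c)R$.
\end{itemize}
Summing gives $\|\Phi(x,y)\|_{\ml E}\le \|(a,b)\|_{\ml E}+\theta R$, where $\theta$ is an explicit constant built from $\tfrac1{16}$, $c<\tfrac1{30}$ and the factor $\tfrac12$. Since $\|(a,b)\|_{\ml E}<r/4$, invariance requires $\theta\le\tfrac{15}{16}$, and we will check the arithmetic against this threshold.

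\textbf{Contraction.} For the difference of two points of the ball, the linear terms are handled exactly as above. For the bilinear term we write
\[
\ml B(f_1,f_1)-\ml B(f_2,f_2)=\ml B(f_1-f_2,f_1)+\ml B(f_2,f_1-f_2),
\]
which is bounded by $\tfrac1{16r}\big(\|f_1\|_X+\|f_2\|_X\big)\|f_1-f_2\|_X\le \tfrac{8r}{16r}(\tfrac12+c)^2\|\cdot\|_{\ml E}$. The Banach fixed-point theorem then yields existence and uniqueness of $(x,y)$ in $B(0,4r)$.

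\textbf{Main obstacle.} The delicate step is the numerical bookkeeping: the $y$-equation has Lipschitz constant about $\tfrac12+c$, which already consumes half of the budget, so the remaining contributions must fit in the other half. Under the crude bound the $\ml B$ contribution ($\approx \tfrac18$ for $c$ small) is admissible. If the constants fail to close with the weight $2$, the first thing to try is eliminating $y$: for fixed $x$, solve $y=b+f(x,y)$ uniquely, since $y\mapsto f(x,y)$ is a contraction in $E$ with constant $c$. This gives $y=y(x)$ with $\|y(x)\|_E\le (1-c)^{-1}(\|b\|_E+\|x\|_{\dot E})$. We would then run the contraction on $x$ alone in $\dot E$, where all terms carry the small factors $\tfrac1{16}$ and $\tfrac1{16r}$, and finally recover the bound on $(x,y(x))$ in the $\ml E$-norm.
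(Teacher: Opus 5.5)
The paper records this lemma without proof. A Banach fixed-point argument for $\Phi$ on the closed ball of radius $4r$ in $\mathcal E$ is the natural route, and you correctly normalize $\|\cdot\|_E\le\|\cdot\|_X$. The gap is that the step you postpone as ``bookkeeping'' is the whole content of the lemma. With the estimate you commit to, $\|f(x,y)\|_X\le(\tfrac12+c)\|(x,y)\|_{\mathcal E}$, the contraction step does not close.

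Let $D$ be the $\mathcal E$-distance of two points of the ball, and bound every occurrence of $\|f\|_X$ this way. You get:
\begin{itemize}
\item $(\tfrac12+c)D$ from the second component;
\item $\tfrac18(1+\tfrac12+c)D$ from $g,h,\mathcal G$ after the weight $2$;
\item $(\tfrac12+c)^2D$ from $\mathcal B$ after the weight $2$. Your ``$\approx\tfrac18$'' omits that weight; it is $\approx\tfrac14$ in $\mathcal E$.
\end{itemize}
The total is $(\tfrac{15}{16}+\tfrac{17}{8}c+c^2)D$. This exceeds $D$ once $c\gtrsim0.029$; for example, it is about $1.009\,D$ at $c=\tfrac1{30}$. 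The budget is already exactly $\tfrac{15}{16}$ at $c=0$, so no loss of order $c$ is affordable. Your sentence ``the Banach fixed-point theorem then yields existence and uniqueness'' is therefore not justified by the bounds you wrote.

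The missing observation is that $c\le\tfrac12$ gives $\|x\|_{\dot E}+c\|y\|_E\le\tfrac12\big(2\|x\|_{\dot E}+\|y\|_E\big)$, i.e.\ $\|f(x,y)\|_X\le\tfrac12\|(x,y)\|_{\mathcal E}$. This is exactly what the weight $2$ in the norm is for. On the ball it gives $\|f\|_X\le 2r$. Set $R=\|(x,y)\|_{\mathcal E}\le4r$ and list, in order, the contributions of $f$, of $g$ and $h$, of $\mathcal G$, and of $\mathcal B$. For invariance,
\[
\|\Phi(x,y)\|_{\mathcal E}\le\|(a,b)\|_{\mathcal E}+\Big(\tfrac12+\tfrac18+\tfrac1{16}+\tfrac18\Big)R<\tfrac r4+\tfrac{13}{4}r<4r,
\]
and for two points $z_1,z_2$ of the ball,
\[
\|\Phi(z_1)-\Phi(z_2)\|_{\mathcal E}\le\Big(\tfrac12+\tfrac18+\tfrac1{16}+\tfrac14\Big)D=\tfrac{15}{16}D.
\]
For the bilinear term these use $\tfrac1{16r}\|f\|_X^2\le\tfrac{R^2}{64r}\le\tfrac R{16}$ and $\tfrac1{16r}(\|f_1\|_X+\|f_2\|_X)\|f_1-f_2\|_X\le\tfrac D8$, each then doubled by the weight. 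With this bound the direct contraction closes. Your fallback of first solving $y=y(x)$ would also work, but it is unnecessary.
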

Let $f_n$ also satisfy \eqref{eq;linear;f} and \eqref{eq;controlled;paracontrolled ansatz} with the following property
\begin{equation}\label{eq;f_n}
    f_n(x,y)-f(x,y)=(f_n-f)(0,y), \quad \Vert (f_n-f)(x,y)\Vert_X\leq \Vert x\Vert_{\dot E}+(c_n-c)\Vert y\Vert _E.
\end{equation}

\begin{lemma}\label{le:Picard-stability}
    Let the assumptions of Lemma~\ref{le;Picard th;specific} hold.
    For each $n\geq1$, let
    \[
        f_n:E\times E\to X,\qquad
        g_n:E\to\dot E,\qquad
        h_n:\dot E\to\dot E,\qquad
        \mathcal G_n:X\to\dot E
    \]
    satisfy the same assumptions, uniformly in $n$, with constants
    $c_n<1/30$. Assume that
    \begin{equation}
        (f_n-f)(x,y)=(f_n-f)(0,y)
    \end{equation}
    and
    \begin{equation}\label{eq;fn-difference}
        \|(f_n-f)(0,y)\|_X
        \leq |c_n-c|\,\|y\|_E.
    \end{equation}
    Set
    \begin{equation}\label{eq:Delta-Xn}
        \Delta_n\mathbb X
        :=
        \|g_n-g\|_{E\to\dot E}
        +\|h_n-h\|_{\dot E\to\dot E}
        +\|\mathcal G_n-\mathcal G\|_{X\to\dot E}
        +|c_n-c|.
    \end{equation}
    Suppose that
    \[
        \Delta_n\mathbb X\longrightarrow0.
    \]
    Let $(a,b),(a_n,b_n)\in B(0,r/4)\subset\mathcal E$ and assume
    \[
        a_n\longrightarrow a\quad\text{in }\dot E,
        \qquad
        b_n\longrightarrow b\quad\text{in }E.
    \]
    Let $(x,y)$ and $(x_n,y_n)$ be the corresponding solutions given
    by Lemma~\ref{le;Picard th;specific}. Then
    \begin{equation}\label{eq:stability-fixed-point}
        2\|x_n-x\|_{\dot E}+\|y_n-y\|_E
        \lesssim
        \|a_n-a\|_{\dot E}
        +\|b_n-b\|_E
        +C(r,c)\Delta_n\mathbb X.
    \end{equation}
    In particular,
    \[
        (x_n,y_n)\longrightarrow(x,y)
        \qquad\text{in }\mathcal E.
    \]
    \begin{proof}
    By Lemma~\ref{le;Picard th;specific}, both $(x,y)$ and
    $(x_n,y_n)$ belong to $B(0,4r)\subset\mathcal E$. Hence
    \begin{equation}\label{eq:uniform-xy}
        \|x\|_{\dot E},\|x_n\|_{\dot E}\leq 2r,
        \qquad
        \|y\|_E,\|y_n\|_E\leq 4r.
    \end{equation}
    For simplicity, set
    \[
        A_n:=\|x_n-x\|_{\dot E},
        \qquad
        B_n:=\|y_n-y\|_E,
        \qquad
        \delta_n:=\Delta_n\mathbb X.
    \]

    We first estimate $B_n$. By the equations for $y_n$ and $y$,
    the linearity of $f_n$, and \eqref{eq;f_n},
    \begin{align*}
        B_n
        &\leq
        \|b_n-b\|_E
        +\|f_n(x_n,y_n)-f(x,y)\|_X\\
        &\leq
        \|b_n-b\|_E
        +\|f_n(x_n-x,y_n-y)\|_X
        +\|(f_n-f)(x,y)\|_X\\
        &=
        \|b_n-b\|_E
        +\|f_n(x_n-x,y_n-y)\|_X
        +\|(f_n-f)(0,y)\|_X.
    \end{align*}
    Therefore, by the control assumption on $f_n$,
    \begin{equation}\label{eq:Bn-estimate}
        B_n
        \leq
        \|b_n-b\|_E
        +A_n+c_nB_n
        +|c_n-c|\,\|y\|_E.
    \end{equation}
    Using \eqref{eq:uniform-xy}, we obtain
    \begin{equation}\label{eq:Bn-estimate2}
        (1-c_n)B_n
        \leq
        \|b_n-b\|_E+A_n+4r\delta_n.
    \end{equation}

    Next, define
    \[
        F_n:=f_n(x_n,y_n),
        \qquad
        F:=f(x,y).
    \]
    The uniform bounds \eqref{eq:uniform-xy} imply
    \begin{align}
        \|F_n\|_X
        &\leq 2r+4c_nr,\label{eq:Fn-bound}\\
        \|F\|_X
        &\leq 2r+4cr.\label{eq:F-bound}
    \end{align}
    Moreover, arguing as above,
    \begin{equation}\label{eq:Fn-F}
        \|F_n-F\|_X
        \leq
        A_n+c_nB_n+4r\delta_n.
    \end{equation}

    We now estimate $A_n$. Subtracting the fixed-point equations for
    $x_n$ and $x$, we obtain
    \begin{align*}
        x_n-x
        ={}&
        a_n-a
        +\big(g_n(y_n)-g(y)\big)
        +\big(h_n(x_n)-h(x)\big)\\
        &+\mathcal B(F_n,F_n)-\mathcal B(F,F)
        +\mathcal G_n(F_n)-\mathcal G(F).
    \end{align*}
    We estimate each term separately. First,
    \begin{align*}
        \|g_n(y_n)-g(y)\|_{\dot E}
        &\leq
        \|(g_n-g)(y_n)\|_{\dot E}
        +\|g(y_n-y)\|_{\dot E}\\
        &\leq
        4r\delta_n+\frac1{16}B_n.
    \end{align*}
    Similarly,
    \begin{align*}
        \|h_n(x_n)-h(x)\|_{\dot E}
        &\leq
        \|(h_n-h)(x_n)\|_{\dot E}
        +\|h(x_n-x)\|_{\dot E}\\
        &\leq
        2r\delta_n+\frac1{16}A_n.
    \end{align*}

    For the bilinear term, using bilinearity,
    \[
        \mathcal B(F_n,F_n)-\mathcal B(F,F)
        =
        \mathcal B(F_n-F,F_n)
        +\mathcal B(F,F_n-F).
    \]
    Hence
    \begin{align*}
        &\|\mathcal B(F_n,F_n)-\mathcal B(F,F)\|_{\dot E}\\
        &\qquad\leq
        \frac1{16r}
        \|F_n-F\|_X
        \big(\|F_n\|_X+\|F\|_X\big).
    \end{align*}
    Combining \eqref{eq:Fn-bound}--\eqref{eq:Fn-F}, we obtain
    \begin{align}
        &\|\mathcal B(F_n,F_n)-\mathcal B(F,F)\|_{\dot E}
        \notag\\
        &\qquad\leq
        \frac{1+c_n+c}{4}
        \big(A_n+c_nB_n+4r\delta_n\big).
        \label{eq:B-difference}
    \end{align}

    Finally,
    \begin{align*}
        \|\mathcal G_n(F_n)-\mathcal G(F)\|_{\dot E}
        &\leq
        \|\mathcal G_n(F_n-F)\|_{\dot E}
        +\|(\mathcal G_n-\mathcal G)(F)\|_{\dot E}\\
        &\leq
        \frac1{16}\|F_n-F\|_X
        +\delta_n\|F\|_X\\
        &\leq
        \frac1{16}
        \big(A_n+c_nB_n+4r\delta_n\big)
        +(2r+4cr)\delta_n.
    \end{align*}

    Collecting the preceding estimates gives
    \begin{align}
        A_n
        \leq{}&
        \|a_n-a\|_{\dot E}
        +\frac1{16}B_n+\frac1{16}A_n
        +C(r,c)\delta_n \notag\\
        &+
        \left(
            \frac{1+c_n+c}{4}+\frac1{16}
        \right)
        \big(A_n+c_nB_n+4r\delta_n\big).
        \label{eq:An-pre}
    \end{align}
    
  Combining the above estimates, we obtain
\begin{align*}
    2A_n+B_n
    \leq{}&
    2\|a_n-a\|_{\dot E}
    +\|b_n-b\|_E
    +C(r,c)\delta_n \\
    &+\theta_1 A_n+\theta_2 B_n,
\end{align*}
where $\theta_1$ and $\theta_2$ depend only on $c$ and $c_n$.
Since $c,c_n<1/30$, these coefficients are sufficiently small so that
\[
    \theta_1 A_n+\theta_2 B_n
    \leq \theta(2A_n+B_n)
\]
for some $\theta\in(0,1)$ independent of $n$. Hence, absorbing this term
into the left-hand side yields
\begin{equation}
    2\|x_n-x\|_{\dot E}+\|y_n-y\|_E
    \lesssim
    \|a_n-a\|_{\dot E}
    +\|b_n-b\|_E
    +C(r,c)\Delta_n\mathbb X.
\end{equation}
    which completes the proof.
\end{proof}
\end{lemma}

 \section{Some useful estimates}

\begin{lemma}\label{le;Ist}
For \(0\leq s\leq t\), define

$$
I_{s,t}(k)
:=
\int_0^t\int_0^s
e^{-(t-r)|k|^2-(s-\tau)|k|^2-|r-\tau||k|^2}
\,\mathrm d\tau\,\mathrm dr.
$$
Then, for every \(\delta\in[0,1]\),
\begin{align*}
|I_{t,t}(k)|
&\lesssim \frac{1}{|k|^4},\quad
|I_{t,t}(k)-I_{s,t}(k)|
\lesssim
\frac{|t-s|^\delta}{|k|^{4-2\delta}} .
\end{align*}

\begin{proof}
We first decompose
$$
I_{s,t}(k)
=
e^{-(t-s)|k|^2}I_{s,s}(k)
+
\int_s^t\int_0^s
e^{-(t-r)|k|^2-(s-\tau)|k|^2-(r-\tau)|k|^2}
\,\mathrm d\tau\,\mathrm dr .
$$
For the second term, a straightforward calculation gives
$$
\begin{aligned}
&\int_s^t\int_0^s
e^{-(t-r)|k|^2-(s-\tau)|k|^2-(r-\tau)|k|^2}
\,\mathrm d\tau\,\mathrm dr\\
&
=e^{-(t+s)|k|^2}
\int_s^t\int_0^s e^{2\tau|k|^2}\,\mathrm d\tau\,\mathrm dr
=
\frac{(t-s)\big(e^{-(t-s)|k|^2}-e^{-(t+s)|k|^2}\big)}
{2|k|^2}.
\end{aligned}
$$
On the other hand, by symmetry,
$$
\begin{aligned}
I_{s,s}(k)
&=
2\int_0^s\int_0^r
e^{-(s-r)|k|^2-(s-\tau)|k|^2-(r-\tau)|k|^2}
\,\mathrm d\tau\,\mathrm dr\\
&=
2e^{-2s|k|^2}
\int_0^s\int_0^r e^{2\tau|k|^2}\,\mathrm d\tau\,\mathrm dr\\
&=
\frac{1-e^{-2s|k|^2}}{2|k|^4}
-\frac{s e^{-2s|k|^2}}{|k|^2}.
\end{aligned}$$
Since \(s e^{-2s|k|^2}\lesssim |k|^{-2}\), it follows that
$$
|I_{s,s}(k)|\lesssim |k|^{-4}.
$$
Combining the above identities and using
$$
|1-e^{-x}|\lesssim x^\delta,
\quad
x^\delta e^{-x}\lesssim1,
\quad \delta\in[0,1],
$$
we obtain
$$
|I_{t,t}(k)-I_{s,t}(k)|
\lesssim
\frac{|t-s|^\delta}{|k|^{4-2\delta}},
$$
which completes the proof.
\end{proof}
\end{lemma}

\begin{lemma}\label{le;covariance; QX}
    The spatial Fourier transform $\widehat{\ml Q}(t,k)$ is a Gaussian process with zero-mean satisfying
         \begin{align*}
    \m E[\widehat {\ml Q}(t,k)\widehat {\ml Q}(s,l)]=\textit{1}_{k+l=0}\frac{1}{(1+|k|^2)^{2\eta}} I_{s,t}(k).
            \end{align*}
        Moreover, we have the following estimates
        \begin{align*}
          \m E[\widehat {Y}(t,k)\widehat {Y}(t,l)]\lesssim\frac{1}{(1+|k|^2)^{2\eta}|k|^{2}},\quad \m E[\widehat {\ml Q}(t,k)\widehat {\ml Q}(t,l)]\lesssim\frac{1}{(1+|k|^2)^{2\eta}|k|^{4}}\\
            \m E[\widehat {\ml Q}_{s,t}(k)\widehat {\ml Q}_{s,t}(l)],  \m E[\widehat {\ml Q}_{s,t}(k)\widehat {\ml Q}(s,l)\lesssim\frac{(t-s)^\delta|k|^{2\delta}}{(1+|k|^2)^{2\eta}|k|^{4}}\\
             \m E[\widehat {Y}_{s,t}(k)\widehat {Y}_{s,t}(l)],    \m E[\widehat {Y}_{s,t}(k)\widehat {Y}(s,l)]\lesssim\frac{(t-s)^\delta|k|^{2\delta}}{(1+|k|^2)^{2\eta}|k|^{2}}
        \end{align*}
and for $0\leq s\leq t$, we have
\begin{align*}
         \m E[\widehat {\ml Q}(s,k)\widehat {Y}_{s,t}(k)],\:\m E[\widehat {Y}(t,k)\widehat {\ml Q}_{s,t}(k)]\lesssim \frac{|t-s|^\delta|k|^{2\delta}}{(1+|k|^2)^{2\eta}|k|^3}.
\end{align*}

\begin{proof}
We first estimate the mixed covariance between \(Y\) and \(\ml Q\). A direct calculation gives
$$
\begin{aligned}
\left|
\mathbb E[
\widehat Y(t,k)\widehat{\ml Q}(s,l)]
\right|
&=
\mathbf 1_{\{k+l=0\}}
\frac{1}{(1+|k|^2)^{2\eta}|k|}
\int_0^s
e^{-(s-r)|k|^2-(t-r)|k|^2}\,\mathrm dr\\
&=
\mathbf 1_{\{k+l=0\}}
\frac{
e^{-(t-s)|k|^2}-e^{-(t+s)|k|^2}
}{
2(1+|k|^2)^{2\eta}|k|^3
}\\
&\lesssim
\mathbf 1_{\{k+l=0\}}
\frac{1}{(1+|k|^2)^{2\eta}|k|^3}.
\end{aligned}
$$
Using \(|1-e^{-x}|\lesssim x^\delta\), we therefore obtain
$$
\left|
\mathbb E[
\widehat Y_{s,t}(k)\widehat{\ml Q}(s,l)]
\right|
\lesssim
\mathbf 1_{\{k+l=0\}}
\frac{|t-s|^\delta|k|^{2\delta}}
{(1+|k|^2)^{2\eta}|k|^3}.
$$
A similar argument gives
$$
\left|
\mathbb E[
\widehat Y(t,k)\widehat{\ml Q}_{s,t}(l)]
\right|
\lesssim
\mathbf 1_{\{k+l=0\}}
\frac{|t-s|^\delta|k|^{2\delta}}
{(1+|k|^2)^{2\eta}|k|^3}.
$$

We next consider the covariance of \(\widehat{\ml Q}\). By a direct calculation,
$$
\mathbb E[
\widehat{\ml Q}(t,k)\widehat{\ml Q}(s,l)]
=
\mathbf 1_{\{k+l=0\}}
\frac{I_{s,t}(k)}
{(1+|k|^2)^{2\eta}}.
$$
Lemma \ref{le;Ist} then yields
$$
\left|
\mathbb E[
\widehat{\ml Q}(t,k)\widehat{\ml Q}(t,l)]
\right|
\lesssim
\mathbf 1_{\{k+l=0\}}
\frac{1}{(1+|k|^2)^{2\eta}|k|^4}.
$$
Moreover,
$$
\begin{aligned}
\left|
\mathbb E[
\widehat{\ml Q}_{s,t}(k)\widehat{\ml Q}(s,l)]
\right|
&=
\mathbf 1_{\{k+l=0\}}
\frac{|I_{s,t}(k)-I_{s,s}(k)|}
{(1+|k|^2)^{2\eta}}\\
&\lesssim
\mathbf 1_{\{k+l=0\}}
\frac{|t-s|^\delta}
{(1+|k|^2)^{2\eta}|k|^{4-2\delta}}.
\end{aligned}
$$

Finally, expanding the increment covariance and applying the preceding estimates gives
$$
\left|
\mathbb E[
\widehat{\ml Q}_{s,t}(k)
\widehat{\ml Q}_{s,t}(l)]
\right|
\lesssim
\mathbf 1_{\{k+l=0\}}
\frac{|t-s|^\delta}
{(1+|k|^2)^{2\eta}|k|^{4-2\delta}}.
$$
The corresponding estimates for \(Y\) follow in the same way from its covariance formula. This completes the proof.
\end{proof}
\end{lemma}

 \begin{lemma}\label{le;commutater;D}\cite{bahouriFourierAnalysisNonlinear2011}
	Let $f$ be a smooth function on $\mathbb{R}^d$ and be homogeneous of degree $m$ away from $0$. Let $\rho\in(0,1)$, $s\in\mathbb{R}$ and $p,r \in[1,\infty]$. If $p_1,p_2\in[1,\infty]$ such that $\frac{1}{p_1}+\frac{1}{p_2}=\frac{1}{p}$, then the bound
	\begin{align*}
		\Vert a\prec f(D)u-f(D)(a\prec u)\Vert_{B_{p,r}^{s-m+\rho}}\lesssim \Vert \nabla a\Vert_{B_{p_1,\infty}^{\rho-1}}\Vert u\Vert_{B_{p_2,r}^s}
	\end{align*}
	holds.
\end{lemma}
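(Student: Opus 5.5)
The commutator estimate of Lemma~\ref{le;commutater;D} is a classical result from \cite{bahouriFourierAnalysisNonlinear2011}. The plan is to reduce it to a dyadic computation through the paraproduct structure. Writing $a\prec u=\sum_j S_{j-1}a\,\Delta_j u$ and using $f(D)\Delta_j u=\Delta_j f(D)u$, we obtain
\[
a\prec f(D)u-f(D)(a\prec u)=\sum_j [S_{j-1}a, f(D)]\Delta_j u .
\]
Each summand has Fourier support in an annulus $2^j\mathcal C'$, because $S_{j-1}a$ is frequency-localized in a ball of radius $\lesssim 2^{j-2}$. By the standard lemma on series of annulus-localized pieces, it then suffices to prove the dyadic bound
\[
\|[S_{j-1}a,f(D)]\Delta_j u\|_{L^p}\lesssim 2^{j(m-\rho)}\,c_j\,\|\nabla a\|_{B^{\rho-1}_{p_1,\infty}}\|u\|_{B^s_{p_2,r}}2^{-js},
\]
with $(c_j)\in\ell^r$. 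The case $j=-1$ and the finitely many low frequencies, where $f$ is not homogeneous, are absorbed by the smoothness of $f$.

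For the dyadic estimate, pick a smooth $\tilde\varphi$ supported in an annulus and equal to $1$ on the support of $\varphi$, and set $h=\mathcal F^{-1}(f\tilde\varphi)\in\mathcal S$. Then $f(D)\Delta_j u=2^{jm}\,2^{jd}h(2^j\cdot)*\Delta_j u$ by homogeneity. The commutator becomes the integral
\[
2^{jm}\int 2^{jd}h(2^j(x-y))\big(S_{j-1}a(x)-S_{j-1}a(y)\big)\Delta_j u(y)\,dy .
\]
Apply the first-order Taylor formula $S_{j-1}a(x)-S_{j-1}a(y)=\int_0^1 (x-y)\cdot\nabla S_{j-1}a(y+\tau(x-y))\,d\tau$. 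After the change of variables, Young's and H\"older's inequalities (with $1/p_1+1/p_2=1/p$) give
\[
\lesssim 2^{j(m-1)}\,\||z|h(z)\|_{L^1}\,\|\nabla S_{j-1}a\|_{L^{p_1}}\|\Delta_j u\|_{L^{p_2}} .
\]

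The key remaining step is to bound the low-frequency gradient using $\rho<1$, i.e.\ negative index $\rho-1$:
\[
\|\nabla S_{j-1}a\|_{L^{p_1}}\le\sum_{j'\le j-2}\|\Delta_{j'}\nabla a\|_{L^{p_1}}\lesssim\sum_{j'\le j-2}2^{j'(1-\rho)}\|\nabla a\|_{B^{\rho-1}_{p_1,\infty}}\lesssim 2^{j(1-\rho)}\|\nabla a\|_{B^{\rho-1}_{p_1,\infty}} .
\]
The geometric sum converges precisely because $1-\rho>0$, and this is where the restriction $\rho\in(0,1)$ is used. Combining the estimates gives the factor $2^{j(m-\rho)}\|\Delta_ju\|_{L^{p_2}}$. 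Multiplying by $2^{j(s-m+\rho)}$ and taking the $\ell^r$ norm then yields the claim.

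The main obstacle is technical rather than conceptual: the Taylor argument needs care with the $(x-y)$ weight, and one must check that the annulus-support property survives the commutator so that the Besov norm can be recovered from the pieces.
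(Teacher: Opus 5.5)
The paper gives no proof of this lemma and simply quotes it from \cite{bahouriFourierAnalysisNonlinear2011}; your argument is the standard proof given there, and it is correct. It uses the dyadic commutator $\sum_j [S_{j-1}a,f(D)]\Delta_j u$, the first-order Taylor/kernel bound producing $2^{j(m-1)}\|\nabla S_{j-1}a\|_{L^{p_1}}\|\Delta_j u\|_{L^{p_2}}$, the low-frequency bound $\|\nabla S_{j-1}a\|_{L^{p_1}}\lesssim 2^{j(1-\rho)}\|\nabla a\|_{B^{\rho-1}_{p_1,\infty}}$ from $\rho<1$, and the annulus summation lemma.

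Two details need fixing. First, $\tilde\varphi$ must equal $1$ on the enlarged annulus $\mathcal C'$ that contains $2^{-j}$ times the spectrum of $S_{j-1}a\,\Delta_j u$, not merely on $\operatorname{supp}\varphi$. Second, $h$ should be built from the fully homogeneous extension of $f$ (valid for $j$ large). Without these, your kernel representation of $f(D)(S_{j-1}a\,\Delta_j u)$ in the commutator integral does not hold.
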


\begin{lemma}\label{le;commutator;resonant}\cite{zhang2026wellposednessregularityfractionalrough}
	Assume that $\alpha\in(0,1)$, $\eta,\gamma\in \mathbb{R}$ satisfying $\eta+\gamma<0$ and $\alpha+\eta+\gamma>0$, let $C(f,g,h)=(f\prec g)\circ h-f(g\circ h)$, then we have the following estimate
	$$\Vert C(f,g,h)\Vert_{H^{\alpha+\eta+\gamma}}\leq \Vert f\Vert_{\ml C^\alpha}\Vert g\Vert_{H^\eta}\Vert v\Vert_{\ml C^\gamma}.$$
	\begin{proof}
		We rewrite 
		\begin{align*}
			C(f,g,h)=\sum_{j,k\geq -1}\sum_{\vert i-j\vert\leq 1}[1_{i\gtrsim k}R_i(\Delta_k f,g)\Delta_j h-1_{i\leq k-N}\Delta_k f\Delta_jg\Delta_jh]	.
		\end{align*}
		It suffices to estimate above two terms. For any fixed $k$, we have the estimate 
		\begin{align*}
			\Vert \sum_{j\geq -1}\sum_{\vert i-j\vert\leq 1}1_{i\leq k-N}\Delta_k f\Delta_jg\Delta_jh\Vert_{L^2}\lesssim&2^{-k\alpha}\Vert f\Vert_{\ml C^\alpha}\sum_{i=-1}^{k-N}2^{-i(\eta+\gamma)}\Vert g\Vert_{H^\eta}\Vert h\Vert_{\ml C^\gamma}\\
			\lesssim&2^{-k(\alpha+\eta+\gamma)}\Vert f\Vert_{\ml C^\alpha}\Vert g\Vert_{H^\eta}\Vert h\Vert_{\ml C^\gamma},
		\end{align*}
		where we use the fact $\eta+\gamma<0$. Note that the Fourier transform of $$\sum_{j,k\geq -1}\sum_{\vert i-j\vert\leq 1}1_{i\gtrsim k}R_i(\Delta_k f,g)\Delta_j h$$
        is supported in the ball $2^k\ml B$. By Lemma 2.84 in \cite{bahouriFourierAnalysisNonlinear2011}, we obtain the estimate since $\alpha+\eta+\gamma>0$. For the first series, we can check that the Fourier transform of $$\sum_{k\geq -1}\sum_{\vert i-j\vert\leq 1}1_{i\gtrsim k}R_i(\Delta_k f,g)\Delta_j h$$
        is supported in a ball $2^j\ml B$ for fixed $j$. And we calculate that
		\begin{align*}
			\Vert \sum_{k\geq -1}\sum_{\vert i-j\vert\leq 1}1_{i\gtrsim k}R_i(\Delta_k f,g)\Delta_j h\Vert_{L^2}=&\Vert \sum_{\vert i-j\vert\leq 1}R_i(\sum_{1\leq k\lesssim i}\Delta_k f,g)\Delta_j h\Vert_{L^2}\\
			\lesssim &\sum_{\vert i-j\vert\leq 1}2^{-i(\eta+\gamma)}\Vert \sum_{k\lesssim i}\Delta_k f\Vert_{L^\infty} \Vert g\Vert_{H^\eta} 2^{-j\gamma}\Vert h\Vert_{\ml C^\gamma}\\
			\lesssim &2^{-j(\alpha+\eta+\gamma)}\Vert f\Vert_{\ml C^\alpha}\Vert g\Vert_{H^\eta}\Vert h\Vert_{\ml C^\gamma}.
		\end{align*}
		Combining all estimates above, we finish our proof.
	\end{proof}
\end{lemma}

\begin{lemma}\label{le;commutator;modfied paraproduct, L}\cite{zhang2026wellposednessregularityfractionalrough}
    Let $\alpha\in(0,2)$ and $\eta\in\mathbb{R}$. Then for $t>0$, we have 
		\begin{equation}\label{eq;commutator;modfied paraproduct, L;1}
			\Vert f\pprec g-f\prec g\Vert_{W^{\alpha+\eta}}\lesssim \Vert f\Vert_{C^{\frac{\alpha}{2}}_t L^{\infty}}\Vert g\Vert_{W^\eta}.
		\end{equation}
    Moreover, define operator $\ml L=\p_t-\Delta$. Then, for $\alpha\in(0,1)$, we have 
		\begin{equation}\label{eq;commutator;modfied paraproduct, L;2}
			\Vert \ml L(f\pprec g)(t)-(f\pprec\ml Lg)(t)\Vert_{W^{\alpha+\eta-2}}\lesssim \Vert f\Vert_{\widetilde{W}^\alpha_\infty(t)}\Vert g\Vert_{W^\eta}.
		\end{equation}
\end{lemma}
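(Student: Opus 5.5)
The plan is to treat both estimates blockwise in the Littlewood--Paley decomposition. Each summand of $f\pprec g$, $f\prec g$ and of the commutator has the form $a_i\,\Delta_i b$, where $a_i$ is a low-frequency modification of $f$. In every summand $a_i$ is spectrally supported in a ball of radius $\lesssim 2^{i-2}$. Here I use that $Q_i$ acts only in time and so does not change spatial Fourier supports, and that $\partial_x$, $\Delta$ preserve supports. Hence $a_i\Delta_i b$ is supported in an annulus $2^i\mathcal C'$. By the standard annulus lemma (Lemma 2.69 in \cite{bahouriFourierAnalysisNonlinear2011}, valid for every real regularity index), it suffices to show
\[
\|a_i\Delta_i b\|_{L^\infty}\lesssim 2^{-i\theta}M,\qquad \|a_i\Delta_i b\|_{L^2}\lesssim 2^{-i\theta}M,
\]
with $\theta$ the target regularity and $M$ the claimed right-hand side. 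Both bounds will come from $\|a_i\|_{L^\infty}$ times $\|\Delta_i b\|_{L^\infty}$ or $\|\Delta_i b\|_{L^2}$, which controls the $\mathcal C^\theta$ and $H^\theta$ parts of $W^\theta$ simultaneously.

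For \eqref{eq;commutator;modfied paraproduct, L;1}, write
\[
f\pprec g-f\prec g=\sum_i\bigl(Q_iS_{i-1}f-S_{i-1}f\bigr)(t)\,\Delta_i g.
\]
Since $\varphi$ has mass one and support in a fixed compact subset of $\mathbb R_+$,
\[
Q_iS_{i-1}f(t)-S_{i-1}f(t)=\int 2^{2i}\varphi(2^{2i}(t-s))\bigl(S_{i-1}f(s\vee0)-S_{i-1}f(t)\bigr)\,\mathrm ds .
\]
On the support of the kernel $|t-s|\lesssim 2^{-2i}$, and also $|t-s\vee 0|\le|t-s|$. Since $S_{i-1}$ is uniformly bounded on $L^\infty$, this gives
\[
\|Q_iS_{i-1}f(t)-S_{i-1}f(t)\|_{L^\infty}\lesssim 2^{-i\alpha}\|f\|_{C^{\alpha/2}_tL^\infty}.
\]
Multiplying by $\|\Delta_i g\|_{L^\infty\text{ or }L^2}\lesssim 2^{-i\eta}\|g\|_{W^\eta}$ yields the bound $2^{-i(\alpha+\eta)}$, and the annulus lemma concludes. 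The restriction $\alpha<2$ is only needed so that $C^{\alpha/2}$ is a genuine H\"older space.

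For \eqref{eq;commutator;modfied paraproduct, L;2}, I would compute with the Leibniz rule $\Delta(ab)=a\Delta b+2\partial_xa\,\partial_xb+b\Delta a$:
\[
\mathcal L(f\pprec g)-f\pprec\mathcal Lg=\sum_i\Bigl[(\partial_tQ_iS_{i-1}f)\Delta_ig-(\Delta Q_iS_{i-1}f)\Delta_ig-2(\partial_xQ_iS_{i-1}f)(\partial_x\Delta_ig)\Bigr].
\]
The three terms are handled as follows.

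\emph{Time derivative term.} Since $\int\varphi'=0$,
\[
\partial_tQ_iS_{i-1}f(t)=\int 2^{4i}\varphi'(2^{2i}(t-s))\bigl(S_{i-1}f(s\vee0)-S_{i-1}f(t)\bigr)\,\mathrm ds,
\]
and therefore $\|\partial_tQ_iS_{i-1}f(t)\|_{L^\infty}\lesssim 2^{i(2-\alpha)}\|f\|_{C^{\alpha/2}_tL^\infty}$.

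\emph{Laplacian term.} $Q_i$ is an average in time and commutes with spatial derivatives. Using $\alpha<2$ and Bernstein's inequality on the low frequencies,
\[
\|\Delta S_{i-1}f(s)\|_{L^\infty}\lesssim\sum_{j<i}2^{2j}2^{-j\alpha}\|f\|_{C_t\mathcal C^\alpha}\lesssim 2^{i(2-\alpha)}\|f\|_{C_t\mathcal C^\alpha}.
\]

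\emph{Gradient term.} With $\alpha<1$, the same argument gives $\|\partial_xQ_iS_{i-1}f\|_{L^\infty}\lesssim 2^{i(1-\alpha)}\|f\|_{C_t\mathcal C^\alpha}$, while $\|\partial_x\Delta_ig\|\lesssim 2^{i(1-\eta)}\|g\|_{W^\eta}$.

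Each of the three terms is thus $\lesssim 2^{-i(\alpha+\eta-2)}\|f\|_{\widetilde W^\alpha_\infty(t)}\|g\|_{W^\eta}$ in both $L^\infty$ and $L^2$, and the annulus lemma gives the $W^{\alpha+\eta-2}$ bound.

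The main obstacle I expect is the time-derivative term near $t=0$, and the exact role of the cut-off $s\vee0$ and of the extension $f(t\wedge T)$. One must check that $s\mapsto f(s\vee0)$ remains $\alpha/2$-H\"older with the same constant, which holds because $|t-s\vee0|\le|t-s|$ for $t\ge0$. One must also check that differentiating in $t$ produces no boundary term. The kernel is smooth, so the derivative falls only on $\varphi$, and the mean-zero property of $\varphi'$ is what supplies the cancellation. The remaining steps are routine paraproduct bookkeeping.
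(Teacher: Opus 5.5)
Your proposal is correct and is essentially the standard Gubinelli--Imkeller--Perkowski argument for the modified paraproduct, which the paper does not reproduce but takes from its cited source. The steps match: annulus-supported blocks, the H\"older bound on $Q_iS_{i-1}f-S_{i-1}f$, the cancellation $\int\varphi'=0$ for the time derivative, Bernstein for $\Delta Q_iS_{i-1}f$, and $\alpha<1$ for the cross term $\partial_xQ_iS_{i-1}f\,\partial_x\Delta_ig$.

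One small correction concerns the $H^\theta$ part of $W^\theta$. A uniform bound $\|a_i\Delta_ib\|_{L^2}\lesssim 2^{-i\theta}M$ only yields $B^\theta_{2,\infty}$, so the reduction should be stated with $\ell^2$-summability in $i$. Your estimates $\|a_i\|_{L^\infty}\|\Delta_ig\|_{L^2}$ (resp.\ $2^{i}\|\Delta_ig\|_{L^2}$) already give this, because $\bigl\|\bigl(2^{i\eta}\|\Delta_ig\|_{L^2}\bigr)_i\bigr\|_{\ell^2}=\|g\|_{H^\eta}$.
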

\bibliographystyle{alpha}
	\bibliography{ref}
\end{document}